\newtheorem{proposition}{Proposition}[section]
\newtheorem{theorem}[proposition]{Theorem}
\newtheorem{lemma}[proposition]{Lemma}
\newtheorem{definition}[proposition]{Definition}
\newtheorem{remark}[proposition]{Remark}
\newtheorem{corollary}[proposition]{Corollary}
\newcommand{\etale}{\'etal\@ifstar{\'e}{e\xspace}}
\newcommand\blfootnote[1]{%
  \begingroup
  \renewcommand\thefootnote{}\footnote{#1}%
  \addtocounter{footnote}{-1}%
  \endgroup
}
\newcommand{\Addresses}{{% additional braces for segregating \footnotesize
  \bigskip
  %\footnotesize
\noindent\textsc{Department of Mathematics, University of California, Berkeley,}\par\nopagebreak
\noindent Email: \texttt{tong.zhou@berkeley.edu}
  }}
\newcommand{\sslash}{\mathbin{/\mkern-6mu/}}
\newcommand{\CC}{\mathbf{C}}
\newcommand{\ZZ}{\mathbf{Z}}
\newcommand{\CF}{\mathcal{F}}
\newcommand{\CG}{\mathcal{G}}
\newcommand{\CM}{\mathcal{M}}
\newcommand{\Gm}{\mathbf{G}_m}
\newcommand{\X}{\times}
\newcommand{\OX}{\otimes}
\newcommand{\BX}{\boxtimes}
\newcommand{\QQbarl}{\overline{\mathbf{Q}}_\ell}
\newcommand{\CH}{\mathcal{H}}
\newcommand{\CL}{\mathcal{L}}
\newcommand{\AAA}{\mathbf{A}}
\newcommand{\CO}{\mathcal{O}}
\newcommand{\CN}{\mathcal{N}}
\newcommand{\fl}{\mathfrak{l}}
\newcommand{\fb}{\mathfrak{b}}
\newcommand{\fu}{\mathfrak{u}}
\newcommand{\g}{\mathfrak{g}}
\newcommand{\fp}{\mathfrak{p}}
\newcommand{\fh}{\mathfrak{h}}
\newcommand{\ResGP}{\mathrm{Res}^G_P}
\newcommand{\IndPG}{\mathrm{Ind}^G_P}
\title{Character Sheaves on Reductive Lie Algebras in Positive Characteristic}
\author{Tong Zhou}
\date{}
\begin{document}
\maketitle
\begin{abstract}\blfootnote{May 2024}
We prove a microlocal characterisation of character sheaves on a reductive Lie algebra over an algebraically closed field of sufficiently large positive characteristic: a perverse irreducible $G$-equivariant sheaf is a character sheaf if and only if it has nilpotent singular support and is quasi-admissible. We also present geometric proofs, in positive characteristic, of the equivalence between being admissible and being a character sheaf, and various characterisations of cuspidal sheaves, following the work of Mirković.
\end{abstract}

%\tableofcontents

\section{Introduction}
In \cite{lusztig_fourier_1987}, Lusztig defined and studied character sheaves on reductive Lie algebras over an algebraic closure of a finite field, which are analogues of his character sheaves on reductive groups (\cites{lusztig_intersection_1984, lusztig_character_1985}). Later, in the context of D-modules on reductive Lie algebras over $\CC$, Mirković (\cite{mirkovic_character_2004}) obtained analogues results using more geometric methods. He also obtained the following microlocal characterisation of character sheaves: 

\begin{theorem}[{\cite[\nopp 6.3]{mirkovic_character_2004}}]\label{thm_mirk_intro}
    Let $G$ be a connected reductive group over $\CC$, $\g$ be its Lie algebra. Then, an irreducible $G$-equivariant\footnote{In this paper, $G$-equivariance is with respect to the adjoint action unless otherwise specified.} holonomic D-module $\CM$ (concentrated in degree 0) on $\g$ is a character sheaf if and only if $\CM$ has nilpotent singular support and is quasi-admissible. 
\end{theorem}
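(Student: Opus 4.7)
The plan is to prove both implications by induction on the semisimple rank of $G$, with parabolic induction and restriction as the principal reductive tool. The base case is a torus, where every irreducible $G$-equivariant holonomic D-module on $\g$ is a local system, is trivially a character sheaf, and trivially satisfies both microlocal conditions since the nilpotent cone reduces to $\{0\}$.

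For the forward direction, suppose $\CM$ is a character sheaf. By Lusztig's definition adapted to the Lie algebra setting, $\CM$ is an irreducible constituent of $\IndPG \CG$ for some parabolic $P \subset G$ with Levi $L$ and some cuspidal character sheaf $\CG$ on $\fl$. One then shows that nilpotent singular support and quasi-admissibility are each preserved by parabolic induction, and that both hold on cuspidals. The singular support statement should be a calculation on the induction diagram $\fl \xleftarrow{q} \fp \xrightarrow{i} \g$: pullback by the smooth projection $q$ sends nilpotent singular support to nilpotent singular support (an element of $\fp$ is nilpotent iff its image in $\fl$ is), and the proper pushforward by $i$ preserves nilpotence of the supports on the cotangent side. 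Quasi-admissibility is then handled by noting that the Fourier transform commutes, up to shifts, with parabolic induction, so that quasi-admissibility of cuspidals implies quasi-admissibility of their induction.

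For the backward direction, assume $\CM$ is irreducible $G$-equivariant with nilpotent singular support and quasi-admissible, and split into two cases. If $\ResGP \CM \neq 0$ for some proper parabolic $P$ with Levi $L$, pick a nonzero irreducible constituent $\CG$ of $\ResGP \CM$; by adjunction between $\ResGP$ and $\IndPG$, $\CM$ appears as a constituent of $\IndPG \CG$. One checks that $\CG$ inherits both hypotheses: nilpotent singular support of $\CG$ follows from a microlocal analysis of the restriction diagram (running the induction diagram backwards), while its quasi-admissibility follows from the compatibility of parabolic restriction with Fourier transform on the Levi. The inductive hypothesis then forces $\CG$ to be a character sheaf on $\fl$, so $\CM$ is a character sheaf on $\g$ by definition. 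Otherwise $\CM$ is cuspidal, and one must identify it with one of Lusztig's cuspidal character sheaves directly.

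The main obstacle will be this cuspidal case: showing that a cuspidal, irreducible, $G$-equivariant perverse sheaf with nilpotent singular support that is quasi-admissible must be a cuspidal character sheaf. The natural strategy is to exploit quasi-admissibility via the Fourier transform, which then produces another irreducible $G$-equivariant perverse sheaf $\CM^\vee$; the nilpotent singular support of $\CM$ translates under Fourier duality into a support condition forcing $\CM^\vee$ to be supported on $\CN$, so $\CM^\vee$ is the intersection cohomology extension of an equivariant local system on a nilpotent orbit. Cuspidality combined with this structure should then match $\CM$ with an entry in Lusztig's classification (\cite{lusztig_fourier_1987, lusztig_cuspidal_1995}). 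A secondary technical obstacle is establishing the precise characteristic cycle behaviour of parabolic induction and restriction in the $G$-equivariant perverse setting used above, for which one expects to draw on the Mirković-Vilonen analysis (\cite{mirkovic_characteristic_1988}).
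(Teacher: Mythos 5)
Your forward direction matches the paper's in substance (character $=$ admissible, cuspidals have nilpotent singular support, and induction preserves it via the diagram $\fl \xleftarrow{\pi} \fp \xrightarrow{i} \g$), but your backward direction takes a genuinely different route. The paper does not induct on rank there: it uses the Lusztig stratification to write $\CF \cong j_{!*}\CL$ on a single stratum $S_{(L,\CO)}$, restricts to $Z_r(\fl)+\CO$, middle-extends to $\fl$, and proves (i) $\CF$ is a constituent of the parabolic induction of that middle extension (a Galois-cover computation on the induction diagram) and (ii) the middle extension has nilpotent singular support, via the key geometric input that the slice $\fl_r \hookrightarrow \g$ is \emph{properly $SS(\CF)$-transversal}. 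The microlocal characterisation of cuspidals (supported on $\CN$ with nilpotent $SS$ $\Rightarrow$ cuspidal, via the conic Fourier transform) then finishes; your treatment of the cuspidal case is essentially this same argument, and the appeal to Lusztig's classification is unnecessary --- once $F\CM$ is irreducible and supported on $\CN$ it is orbital, so $\CM$ is a character sheaf by definition. Your rank induction buys a shorter-looking reduction; the paper's slice argument buys explicit control of which Levi and which cuspidal occur, and avoids ever having to bound the singular support of a parabolic restriction.

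That avoidance matters, because the one step in your Case 1 that would not survive as written is ``nilpotent singular support of $\CG$ follows from a microlocal analysis of the restriction diagram (running the induction diagram backwards).'' The standard functorial estimate for $i^*$ does not apply: $T^*_{\fp}\g \cong \fp \times \fu$ meets the nilpotent Lagrangian $\Lambda = \{(x,y): y \in \CN_G,\ [x,y]=0\}$ outside the zero section (e.g.\ at $x=0$, $y\in\fu$ arbitrary), so $i$ is \emph{not} non-characteristic for $SS(\CM)$, and $\pi_!$ is a non-proper pushforward with no a priori bound. The statement you need --- that parabolic restriction preserves nilpotent singular support for $G$-equivariant objects --- is true over $\CC$, but it is a theorem of Ginzburg \cite{ginzburg_induction_1993} resting on the moment-map geometry (or on hyperbolic localization), not a routine diagram chase; you should cite it rather than derive it. Similarly, quasi-admissibility of $\CG$ does not follow from ``compatibility of parabolic restriction with the Fourier transform'': the working mechanism (used in the paper's Lemma \ref{lem_char_are_qadm}) is the identification $(\ResGP\CF)|_{\fl_r} \cong \CF|_{\fl_r}[2\dim U]$ on the regular locus, from which the stratum-by-stratum $AS\boxtimes\CL'$ form is read off; you would also need to handle the discrepancy between $Z_r(\fl_1)$ computed in $L$ versus in $G$ (the former is strictly larger), extending across it by irreducibility of the local systems involved.
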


The notions of character sheaves and quasi-admissible sheaves will be recalled in §\ref{sec_def_of_sheaves}. The latter roughly means that the restriction of $\CM$ to each stratum $S_{(L,\CO)}$ of the Lusztig stratification $\g=\bigsqcup S_{(L,\CO)}$ (see §\ref{sec_lusztigstrat}) takes a form which is particularly simple “in the centre direction”. Here $L$ is a Levi subgroup and $\CO$ is a nilpotent orbit of $L$ in its Lie algebra $\fl$.\\

For character sheaves on reductive groups over an algebraically closed field $k$, a similar microlocal characterisation (no quasi-admissibility is needed in the group case) is known for $k=\CC$ and D-modules by \cite{mirkovic_characteristic_1988}, and known in the “only if” direction for arbitrary characteristic and $\ell$-adic sheaves by \cite{psaromiligkos_character_2023}.\\

The main new result of this paper is the analogue of Theorem \ref{thm_mirk_intro} in positive characteristic:

\begin{theorem}[Theorem \ref{thm_characterise_char}]\label{thm_main_intro}
    Let $G$ be a connected reductive group over an algebraically closed field of characteristic $p>0$, $\g$ be its Lie algebra. Assume $p$ is sufficiently large\footnote{See Conventions (the third paragraph) for an explicit bound.} with respect to the root system of $G$. Then, a perverse irreducible $G$-equivariant $\QQbarl$-sheaf $\CF$ on $\g$ is a character sheaf if and only if $\CF$ has nilpotent singular support and is quasi-admissible. 
\end{theorem}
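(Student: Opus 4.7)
The plan is to follow Mirković's D-module approach in \cite{mirkovic_character_2004}, adapted to the $\ell$-adic setting in positive characteristic. The main tools are the Deligne-Laumon Fourier transform $\mathbf{F}$ on $\g$, with the Killing form identifying $\g\cong\g^*$ (non-degenerate under the standing assumption on $p$), together with Saito's singular support formalism and the geometric characterisations of cuspidality established in the earlier sections of the paper.

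For the \emph{only if} direction, I would induct on the semisimple rank of $G$. A character sheaf is, by definition, a simple summand of $\IndPG\CK$ for some cuspidal character sheaf $\CK$ on a Levi Lie algebra $\fl$. For the base case, the vanishing of parabolic restrictions combined with the Fourier-transform characterisation of cuspidality forces $\mathrm{supp}(\mathbf{F}(\CK))\subset\fl$ to lie in the nilpotent cone, yielding nilpotent SS for $\CK$; quasi-admissibility is automatic since the unique open Lusztig stratum in $\mathrm{supp}(\CK)$ is $S_{(L,\CO)}$ itself. Both conditions propagate through $\IndPG$, which is the functor $\pi_!q^*[\mathrm{shift}]$ attached to the correspondence $\g\xleftarrow{\pi} G\times^P\fp\xrightarrow{q}\fl$: nilpotent SS is preserved because the Steinberg-type correspondence carries $\CN_\fl+\fu$ into $\CN\subset\g$, while quasi-admissibility is preserved by the standard compatibility of $\IndPG$ with the Lusztig stratification.

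For the \emph{if} direction -- the substantive one -- let $\CF$ satisfy both conditions and let $S_{(L,\CO)}$ be the unique open Lusztig stratum contained in $\mathrm{supp}(\CF)$. Quasi-admissibility pins down $\CF|_{S_{(L,\CO)}}$ as an IC extension of a local system of a very specific form on the finite quotient by $N_G(L)/L$ of a cover of $Z(L)^\circ_{\mathrm{reg}}\times\CO$. I would then apply $\mathbf{F}$: nilpotent SS, via the conical nature of the Killing-form identification, forces $\mathrm{supp}(\mathbf{F}(\CF))\subset\CN$. Using the compatibility of the Fourier transform with parabolic induction (Lusztig's formula, proved geometrically in positive characteristic as in \cite{letellier_fourier_2005}), together with the rigidity of irreducible $G$-equivariant perverse sheaves on $\CN$ supplied by the generalised Springer correspondence (available under the assumption on $p$), $\mathbf{F}(\CF)$ is identified with $\IndPG$ of a cuspidal perverse sheaf on $\fl$; inverting $\mathbf{F}$ and invoking the equivalence between admissibility and being a character sheaf (also established earlier in the paper) shows $\CF$ is a character sheaf.

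The main obstacle I expect is this microlocal rigidity step: verifying that in positive characteristic, the combination of nilpotent SS with the quasi-admissibility profile on the open stratum forces $\mathbf{F}(\CF)$ to be exactly a parabolic induction. Mirković's D-module argument invokes Bernstein-Sato type holonomic extension results that do not transfer directly. Instead, I expect to rely on Saito's characteristic-cycle machinery -- to track how singular support behaves through the Fourier and induction correspondences -- together with the generalised Springer correspondence in characteristic $p$, where the assumption that $p$ is sufficiently large ensures that nilpotent orbits, their component groups, and cuspidal local systems on them behave uniformly with the characteristic $0$ picture. Once this identification is in place, the rest of the argument is formal.
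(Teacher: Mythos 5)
Your ``only if'' direction is essentially the paper's: pass from ``character'' to ``admissible'' (Theorem \ref{thm_adm=char}), note that cuspidal sheaves have nilpotent singular support (Corollary \ref{cor_cusp_are_char}), and propagate nilpotency through $\IndPG=\Gamma^G_P i_*\pi^!$ using $T^*_{\fp}\g=\fu$ and $\CN_L+\fu\subseteq\CN_G$. (One small discrepancy: the paper obtains quasi-admissibility not by pushing it through $\IndPG$ but by the identity $(\ResGP\CF)|_{\fl_r}\cong\CF|_{\fl_r}[2\dim U]$ of Lemma \ref{lem_res=Res} together with the fact that constituents of $\ResGP$ of a character sheaf are character sheaves; your ``standard compatibility of $\IndPG$ with the Lusztig stratification'' would need an argument.)

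The ``if'' direction, however, contains a step that is false, and it is precisely the step you flag as the crux. You assert that nilpotent singular support ``forces $\mathrm{supp}(F\CF)\subseteq\CN$.'' First, the deduction is unavailable: in positive characteristic the compatibility of $SS$ with the Fourier transform is only known for monodromic sheaves (this is the content of \cite{zhou_fourier_2024}, and the paper invokes it only for $\Gm$-equivariant sheaves supported on $\CN_G$); a quasi-admissible sheaf is built from Artin--Schreier factors in the centre directions and is not monodromic, so you cannot transport $SS(\CF)\subseteq\g\times\CN_G$ to a statement about $SS(F\CF)$ or $\mathrm{supp}(F\CF)$. Second, and decisively, the conclusion is wrong: by definition a character sheaf is one whose Fourier transform is orbital, i.e.\ supported on the closure of a \emph{single} $G$-orbit, which is typically not nilpotent. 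For instance the Fourier transform of the (shifted) constant sheaf on a closed regular semisimple orbit is a character sheaf -- so by the theorem it has nilpotent $SS$ and is quasi-admissible -- yet its Fourier transform is supported on that semisimple orbit. Only for \emph{cuspidal} sheaves on semisimple $G$ are both $\CF$ and $F\CF$ supported on $\CN_G$ (Theorem \ref{thm_characterise_cusp}.2). Consequently your subsequent identification of $F(\CF)$ with a parabolic induction via the generalised Springer correspondence on $\CN$ has no starting point.

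The paper's actual route avoids a global Fourier transform of $\CF$ entirely. One restricts $\CF$ to $\fl_r$ and proves that $\fl_r\hookrightarrow\g$ is \emph{properly} $SS(\CF)$-transversal (using the explicit conormal computations of Lemma \ref{lem_conormal_comp} and the shape of $SS(\CF)$ from Lemma \ref{lem_form_of_SS}), so that $SS(\CF|_{\fl_r})=i^\circ SS(\CF)\subseteq\fl_r\times\CN_L$. One then forms the middle extension $j'_{!*}\CL_0$ of $\CF|_{Z_r(\fl)+\CO}$ to $\fl$; quasi-admissibility splits its constituents as $AS\boxtimes(j'_{!*}\CL_i')$, the transversality estimate gives $SS(j'_{!*}\CL_i')\subseteq[\fl,\fl]\times\CN_L$, and the microlocal characterisation of cuspidals (Theorem \ref{thm_characterise_cusp}.4 -- where the Fourier--$SS$ compatibility is legitimately applied, to $\Gm$-equivariant sheaves on the nilpotent cone) shows these are cuspidal. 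A separate descent/Galois-cover argument (Claim 1 in the paper's proof) shows $\CF$ is a constituent of $\IndPG(j'_{!*}\CL_0)$, whence $\CF$ is admissible and one concludes by Theorem \ref{thm_adm=char}. If you want to salvage your outline, this restriction-to-$\fl_r$ and proper-transversality mechanism is the ingredient you need in place of the global Fourier transform.
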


We outline the proof of the “if” direction and point out the novelties compared to \cite{mirkovic_character_2004}. Our general strategy is as in \textit{loc. cit.}: use the Lusztig stratification to obtain a cuspidal sheaf whose parabolic induction contains $\CF$ as an irreducible constituent. The candidate cuspidal sheaf will be obtained from the restriction of $\CF$ to $Z_r(\fl)+\CO$, where $(L,\CO)$ is such that $S_{(L,\CO)}$ 
is the “biggest” stratum on which $\CF$ is non-zero, $Z_r(\fl)$ is the regular part of the centre of $\fl$. Denote by $\CG$ the middle extension to $\fl$ of $\CF|_{Z_r(\fl)+\CO}$. One then needs to analyse the singular support of $\CG$. It is not clear whether Mirković's argument for this works in positive characteristic. Instead, we prove a proper-transversality result, which implies that the singular support of the pullback equals the pullback of the singular support (\cite[\nopp 8.15]{saito_characteristic_2017}). The nilpotency of the singular support of $\CF$ then implies that of $\CG$. The next crucial input is the following microlocal characterisation of cuspidal sheaves: for $G$ semisimple, a perverse irreducible $G$-equivariant sheaf on $\g$ is cuspidal if and only if it is supported on the nilpotent cone and has nilpotent singular support. It follows from this characterisation that irreducible constituents of $\CG$ are cuspidal. Finally, certain geometric properties of the Lusztig strata implies that $\CF$ is an irreducible constituent of the parabolic induction of some irreducible constituent of $\CG$, which finishes the proof. The aforementioned microlocal characterisation of cuspidal sheaves was previously known only over $\CC$. In this paper, we apply the compatibility of characteristic cycles and the Fourier transform for monodromic $\ell$-adic sheaves proved in \cite{zhou_fourier_2024} to obtain the same characterisation in positive characteristic (Theorem \ref{thm_characterise_cusp}.4).\\

Apart from Theorem \ref{thm_main_intro}, we also present proofs, in positive characteristic, of all other main results in \cite{mirkovic_character_2004} concerning admissible, character, and orbital sheaves.

\begin{theorem}[Theorems \ref{thm_characterise_cusp}, \ref{thm_adm=char}]\label{thm_main'_intro}
    Let $G, \g, p$ be as above (with $p$ sufficiently large), $\CF$ be a perverse irreducible $G$-equivariant sheaf on $\g$. Then:\\
    1) $\CF$ is admissible if and only if $\CF$ is a character sheaf.\\
    2) Assume further $G$ is semisimple. Then $\CF$ is cuspidal if and only if $\CF$ and its Fourier transform are supported on the nilpotent cone, if and only if $\CF$ is a character sheaf and is orbital, if and only if $\CF$ is supported on the nilpotent cone and has nilpotent singular support.
\end{theorem}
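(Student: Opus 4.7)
The plan is to adapt Mirković's geometric arguments from \cite{mirkovic_character_2004} to the $\ell$-adic setting in positive characteristic, with the main new input being the compatibility of characteristic cycles with the Fourier transform for monodromic sheaves established in \cite{zhou_fourier_2024}, together with Saito's proper-transversality results \cite{saito_characteristic_2017} for transporting singular support under parabolic induction/restriction.

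For part 2, I would begin with the chain of cuspidal characterisations, treating the microlocal one as the pivot. The implication cuspidal $\Rightarrow$ supported on $\CN$ with nilpotent singular support follows from the general fact (established in the outline of Theorem \ref{thm_main_intro}) that cuspidal sheaves correspond to the stratum $S_{(G,\CO)}$ with $\CO\subset\CN$, plus the analysis of singular support for objects on this deepest stratum. For the converse direction, suppose $\CF$ is supported on $\CN$ with nilpotent singular support. Applying \cite{zhou_fourier_2024}, the Fourier transform $\hat\CF$ has singular support equal to the image of that of $\CF$ under the canonical identification $T^*\g\simeq T^*\g^*$; nilpotency is preserved, and in particular the support of $\hat\CF$ is again contained in $\CN$. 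Using parabolic restriction together with the fact that for a proper Levi $L\subsetneq G$ the stratum $Z_r(\fl)+\CN_L$ meets $\CN$ only in a lower-dimensional locus, one deduces vanishing of all proper parabolic restrictions, i.e.\ cuspidality. The equivalence with ``character sheaf and orbital'' then falls out: orbitality forces the generic locus of $\mathrm{supp}(\CF)$ to be a single nilpotent orbit, hence both $\CF$ and $\hat\CF$ are supported on $\CN$; conversely, once cuspidality is known, Lusztig's classification of cuspidals on $\fl$ as IC extensions of irreducible local systems on a single nilpotent orbit (valid for $p$ large) gives the orbital property, and cuspidals are character sheaves by definition of the latter via parabolic induction.

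For part 1, I would follow Mirković's inductive strategy. One direction, character sheaf $\Rightarrow$ admissible, proceeds by analysing the restriction of $\IndPG(\CE)$ to the Lusztig strata $S_{(L',\CO')}$: the transitivity of induction, together with the fact that a cuspidal $\CE$ on $\fl$ has the form $\CL\boxtimes\mathrm{IC}(\CO,\mathcal E)$ on $Z_r(\fl)+\CN_L$, produces the required form on each stratum. The reverse direction, admissible $\Rightarrow$ character, proceeds exactly as in the outline of Theorem \ref{thm_main_intro}: one locates the maximal stratum $S_{(L,\CO)}$ on which $\CF$ is nonzero, forms the middle extension $\CG$ to $\fl$ of $\CF|_{Z_r(\fl)+\CO}$, transports the nilpotency of the singular support of $\CF$ to $\CG$ using the proper-transversality result from \cite[8.15]{saito_characteristic_2017}, applies the microlocal characterisation of cuspidals from part 2 to conclude irreducible constituents of $\CG$ are cuspidal, and finally exhibits $\CF$ as a constituent of $\IndPG$ of such a constituent using the Lusztig-stratum geometry.

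The main obstacles will be (a) checking that the geometric inputs Mirković used over $\CC$ (transversality, control of parabolic induction along strata, Lusztig's classification of cuspidals) hold in sufficiently large positive characteristic, for which the hypothesis on $p$ and the literature on nilpotent orbits in good characteristic should suffice, and (b) the step that replaces Mirković's D-module Fourier transform argument by the characteristic-cycle compatibility from \cite{zhou_fourier_2024}, which is what allows us to control $\hat\CF$ and its singular support simultaneously and is the only genuinely new technical ingredient.
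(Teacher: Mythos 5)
There are genuine gaps in both parts. In part 2, your deduction of cuspidality from ``$\CF$ and $\hat\CF$ supported on $\CN_G$'' does not work as stated: knowing that $Z_r(\fl)+\CN_L$ meets $\CN_G$ in a small (in fact empty, for $L\subsetneq G$ and $G$ semisimple) locus only tells you that $\ResGP\CF$ vanishes on the regular locus $\fl_r$, not that it vanishes. The skyscraper at $0$ is supported on $\CN_G$ and vanishes on every such stratum, yet all its parabolic restrictions are nonzero. The paper's mechanism is different and uses both supports: by Lemma \ref{lem_fin_conj_classes}.2 one gets $supp(\ResGP\CF)\subseteq\CN_L$ and likewise $supp(\ResGP F\CF)=supp(F\ResGP\CF)\subseteq\CN_L\subseteq[\fl,\fl]$, and since $Z(\fl)\neq 0$ a nonzero sheaf and its Fourier transform cannot both be supported on $[\fl,\fl]$; hence $\ResGP\CF=0$. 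Separately, ``cuspidals are character sheaves by definition of the latter via parabolic induction'' misstates the paper's definitions: a character sheaf is by definition one whose Fourier transform is orbital (the induction-from-cuspidals definition is ``admissible'', and their equivalence is exactly part 1), so here you must actually check $F\CF$ is orbital. Finally, Lusztig's classification of cuspidals is not needed for orbitality: support on $\CN_G$ plus irreducibility already forces the support to be a single orbit closure, since $\CN_G$ has finitely many orbits.

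In part 1 the two directions appear to be swapped, and the argument you assign to ``admissible $\Rightarrow$ character'' (locate the top Lusztig stratum, middle-extend $\CF|_{Z_r(\fl)+\CO}$, transport nilpotent singular support by proper transversality, invoke the microlocal criterion for cuspidality) is in fact the proof of the ``if'' direction of the microlocal characterisation (Theorem \ref{thm_characterise_char}), and using it to prove ``character $\Rightarrow$ admissible'' is circular: it presupposes that character sheaves have nilpotent singular support, a fact the paper only obtains \emph{as a consequence} of admissibility (via $\IndPG$ of a cuspidal). The paper's actual proof of ``character $\Rightarrow$ admissible'' (Theorem \ref{thm_adm=char}) uses no singular support at all: it is an induction on Levi subgroups — if $\CF$ is not cuspidal, choose $P\subsetneqq G$ with $\ResGP\CF\neq 0$, take an irreducible quotient $\CG$, which is a character sheaf by Lemma \ref{lem_res_ind_char_orb}, apply the induction hypothesis to realise $\CG$ as a direct summand of $\mathrm{Ind}^L_{P_1}\CH$ with $\CH$ cuspidal, and conclude $Hom(\CF,\mathrm{Ind}^G_{P_1U}\CH)\neq 0$ by adjunction and transitivity. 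The converse, ``admissible $\Rightarrow$ character'', is the short direction: cuspidals are character sheaves (via part 2 applied to $[L,L]$) and $\IndPG$ preserves character sheaves; no stratum-by-stratum analysis of $\IndPG(\mathcal{E})$ is required for this theorem (that analysis is what establishes quasi-admissibility, a different statement).
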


These results (except the last “if and only if”) are not new: they are the main theorems of \cite{lusztig_fourier_1987} (when $k$ is an algebraic closure of a finite field). We take this opportunity to write down details of the more geometric proofs in \cite{mirkovic_character_2004}, carried out in positive characteristic. We point out two differences between the proofs we present and those in \textit{loc. cit.}: a) in proving “parabolic restriction coincides with usual restriction” (Lemma \ref{lem_res=Res}), we use the Contraction Principle\footnote{This terminology is from \cite{drinfeld_compact_2015}, see §5.3 and Appendix C in \textit{loc. cit.}} and proper-transversal pullback; b) in proving character sheaves are admissible (Theorem \ref{thm_adm=char}), we use an induction argument. For b), Mirković's original argument depends on a claim about the irreducibility of a certain local system \cite[\nopp 5.8.ii]{mirkovic_character_2004}, we have not been able to verify this claim (see Remark \ref{rmk_mirk_claim}).\\
%\footnote{Lusztig also proved that, in the situation of Theorem \ref{thm_main'_intro}.2, $\CF$ is invariant under the Fourier transform.}

§\ref{sec_lusztigstrat} and §\ref{sec_prelim} are devoted to review and preliminary studies. §\ref{sec_lusztigstrat} concerns the Lusztig stratification, which is a crucial tool in the sequel. §\ref{sec_prelim} concerns parabolic induction and restriction, and the classes of sheaves which are the main objects of interest. In §\ref{sec_characterise_cusp} and §\ref{sec_characterise_char}, we prove Theorem \ref{thm_main'_intro} and Theorem \ref{thm_main_intro}, respectively.

\subsection*{Conventions}
     We fix an algebraically closed field $k$ of characteristic $p>0$. Fix another prime $\ell\neq p$. A variety means a finite type reduced separated scheme over $k$. For a variety $X$, $D(X)$ denotes $D^b_c(X,\QQbarl)$ (\cite[\nopp 1.1]{weil_II}). For $\CF\in D(X)$, by an irreducible constituent of $\CF$ we mean an irreducible subquotient of some $^p\CH^i(\CF)$. All derived categories are in the triangulated sense. All sheaf-theoretic functors are derived. A “sheaf” means an object of $D(X)$. A “local system” means an object of $D(X)$ whose cohomology sheaves are lisse with finite type stalks.\\
     
     If $H$ is an algebraic group over $k$ acting on $X$, the triangulated (resp. abelian) category of $H$-equivariant sheaves (resp. perverse sheaves) are denoted by $D_H(X)$ ($:=D^b_c([X/H],\QQbarl)$, $[X/H]$ being the quotient stack) (resp. $Perv_H(X)$). If $H$ is connected, we canonically identify $Perv_H(X)$ as a full subcategory of $Perv(X)$. Induction (resp. forgetful) functors are denoted by $\Gamma$ (resp. $For$) (with appropriate sup and sub scripts) ($For$ is left adjoint to $\Gamma$). For $x\in X$ a closed point, $H_x$ or $Z_H(x)$ denotes its stabiliser in $H$, $H(x)$ denotes its $H$-orbit, $Z_\fh(x)$ denotes elements commuting with $x$ in the Lie algebra $\fh$ of $H$. The set of isomorphism classes of irreducible objects in $Perv_H(X)$ is denoted by $IrrPerv_H(X)$. We will abuse notations and use $\CF\in IrrPerv_H(X)$ to mean $\CF$ is a representative in $Perv_H(X)$ of an element of $IrrPerv_H(X)$. \\
    
     $G$ denotes a fixed connected reductive linear algebraic group, $\g$ denotes its Lie algebra. Throughout this paper, we assume that $p$ does not divide the following set of integers: integer 2, the coefficient of the highest root of $L$, $|P(L)/Q(L)|$, and $|(X(T)/Q(L)_{tor}|$, for $L$ ranging through all Levi subgroups of $G$. Here $P(L)$, $Q(L)$, $X(T)$ denote the weight lattice, root lattice, and character group of $L$, respectively. It is known that under this assumption, the following statements are true (and remain true when $G$ is replaced by any Levi subgroup): 1) there exists a $G$-invariant non-degenerate symmetric bilinear form on $\g$; 2) for each semisimple $x\in\g$, $Z_G(x)^\circ$ is a Levi subgroup; 3) for each $x\in\g$, $Lie(Z_G(x))=Z_\g(x)$; 4) for each Levi subgroup $L$, the regular elements $Z_r(\fl):=\{x\in Z(\fl)\,|\, Z_G(x)^\circ=L\}\subseteq Z(\fl)$ and $\fl_r:=\{x\in\fl\,|\,Z_G(x_s)^\circ\subseteq L\}\subseteq \fl$ are open dense (see §\ref{sec_lusztigstrat}). We refer to \cite[\nopp 2.5, 2.6]{letellier_fourier_2005} for detailed discussion.\\ %These statements (for $G$ and each Levi subgroup) are what are actually needed in this paper.\\

     We fix a $G$-invariant non-degenerate symmetric bilinear form on $\g$, and implicitly identify $\g$ and its dual using this form. This form restricts to an $L$-invariant non-degenerate symmetric bilinear form on $\fl$ for any Levi $L$ (\cite[\nopp 2.5.14]{letellier_fourier_2005}). $\CN_G$ denotes the nilpotent cone in $\g$. For any $x\in\g$,\footnote{In this paper, when talking about points in a Lie algebra, we always mean closed points.} we write $x=x_s+x_n$ for its Jordan decomposition.\\ 

     We fix a non-trivial continuous character $\psi: \ZZ/p \rightarrow \QQbarl^\X$. Fourier transforms are denoted by $F$ and are with respect to this character. As we work over an algebraically closed field, we may ignore Tate twists.\\

     We refer to \cites{beilinson_constructible_2016, saito_characteristic_2017, umezaki_characteristic_2020, barrett_singular_2023} for the theory of singular support and characteristic cycle of $\ell$-adic sheaves.

\subsection*{Acknowledgement}
I am grateful to David Nadler for many valuable discussions and for his initial suggestion to study character sheaves on Lie algebras. I would also like to thank Kostas Psaromiligkos for discussion on his paper \cite{psaromiligkos_character_2023}, and thank Pramod Achar and Xinchun Ma very much for discussions concerning Theorem \ref{thm_characterise_cusp}.4 (see the footnote there).

\section{The Lusztig stratification}\label{sec_lusztigstrat}
The setup is as in the Conventions. In this section, we review the positive characteristic analogue of the Lusztig stratification of a reductive Lie algebra introduced over $\CC$ in \cite[\nopp §6]{lusztig_cuspidal_1995}.\\

Let $(L,\CO)$ be a pair where $L$ is a Levi subgroup of $G$ and $\CO$ is a nilpotent orbit of $L$ in its Lie algebra $\fl$. $Z_r(\fl):=\{z\in Z(\fl)\,|\, Z_G(z)^\circ=L\}$ denotes the regular part of the centre of $\fl$. $\fl_r:=\{y\in\fl\,|\, Z_G(y_s)^\circ\subseteq L\}$ denotes the regular part of $\fl$, it contains $Z_r(\fl)+\CN_L$. By our assumption on $p$ (see Conventions), $Z_r(\fl)\subseteq Z(\fl)$ and  $\fl_r\subseteq \fl$ are open dense\footnote{That $Z_r(\fl)\subseteq Z(\fl)$ is open dense follows from the proof of \cite[\nopp 2.6.13.i]{letellier_fourier_2005}. That $\fl_r\subseteq \fl$ is open dense can be seen as follows: consider the natural map $\chi: \fl\rightarrow \fl\sslash L:= \mathrm{Spec}(Sym(\fl^{\vee})^L)$. We refer to \cite[\nopp 7.12, 7.13]{jantzen_nilpotent_2004} for basic facts about this map (note, in \textit{loc. cit.}, “$\chi$” denotes the composition of our $\chi$ with an embedding $\fl\sslash L\hookrightarrow \AAA^n$). As we assume $p\neq 2$, we have $\fl\sslash L\cong\fh\sslash W_L:= \mathrm{Spec}(Sym(\fh^{\vee})^{W_L})$, where $\fh$ is the Lie algebra of a fixed maximal torus of $L$, and $W_L$ is the Weyl group of $L$. $p\neq 2$ also implies that $\fh_r:=\fl_r\cap\fh$ is open dense in $\fh$ (c.f. \cite[\nopp 13.3]{jantzen_nilpotent_2004}). Let $\tilde{\fh}_r$ be the image of $\fh_r$ in $\fh\sslash W_L$ (it is open dense). $\fl_r\subseteq \fl$ being open dense then follows from the fact that $\fl_r$ equals $\chi^{-1}(\tilde{\fh}_r)$.}.

\begin{definition}[Lusztig strata]
    For a pair $(L,\CO)$, $S_{(L,\CO)}:=$ $^G(Z_r(\fl)+\CO)$ (the $G$-saturation of $Z_r(\fl)+\CO$ in $\g$) is called the \underline{Lusztig stratum} associated to $(L,\CO)$.
\end{definition}

Clearly, $S_{(L,\CO)}=$ $^G(Z_r(\fl)+\CO)$ only depends on the $G$-conjugacy class of $(L,\CO)$. Also note that the number of $G$-conjugacy classes of such pairs is finite. We emphasise that, unless otherwise specified, by “$(L,\CO)$” we always mean the actual pair, not the conjugacy class. 

\begin{proposition}\label{prop_lusztigstrat}
    1) For two pairs $(L,\CO)$ and $(L',\CO')$, if $S_{(L,\CO)}\cap S_{(L',\CO')}\neq\varnothing$, then  $(L,\CO)$ and $(L',\CO')$ are $G$-conjugate.\\
    2) Every $x\in\g$ lies in some $S_{(L,\CO)}$.\\
    3) $S_{(L,\CO)}$ is a locally closed, smooth, and irreducible subvariety of $\g$ of dimension $\mathrm{dim}(G/L)+\mathrm{dim}(Z(\fl))+\mathrm{dim}(\CO)$.\\
    4) The closure $\overline{S_{(L,\CO)}}$ equals the disjoint union of $S_{(L',\CO')}$, where $(L',\CO')$ ranges exactly once through each $G$-conjugacy class satisfying the following property: there exists a representative $(L',\CO')$ in this class such that $L$ lies in $L'$ and $\CO'$ is in the closure of the induced nilpotent orbit of $\CO$ in $\fl'$.\footnote{Let $P$ be any parabolic \textit{in $L'$} with Levi decomposition $P=L\ltimes U$. Then $\CO+\fu\subseteq \fl'$ is irreducible and consists of nilpotent elements, so there is a unique nilpotent orbit $\CO''$ of $L'$ in $\fl'$ such that $\CO''\cap(\CO+\fu)$ is dense in $\CO+\fu$. It is known that $\CO''$ does not depend on the choice of $P$, and is called the induced nilpotent orbit of $\CO$ in $\fl'$. See \cite{lusztig_induced_1979} for details.}
\end{proposition}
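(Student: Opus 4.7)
The strategy is to use Jordan decomposition to establish Parts 1 and 2, to realise $S_{(L,\CO)}$ as a quotient for Part 3, and to invoke the classical theory of induced nilpotent orbits for Part 4.

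Parts 1 and 2 follow almost immediately from Jordan decomposition combined with the hypotheses on $p$. For Part 2, given $x = x_s + x_n \in \g$, set $L := Z_G(x_s)^\circ$. By our hypothesis on $p$, this is a Levi subgroup, and tautologically $x_s \in Z_r(\fl)$. Since $x_n$ commutes with $x_s$ and $\mathrm{Lie}(Z_G(x_s)) = Z_\g(x_s)$, we have $x_n \in \fl$; being nilpotent, it lies in some $L$-orbit $\CO$, whence $x \in Z_r(\fl) + \CO \subseteq S_{(L,\CO)}$. For Part 1, if $x = g(z+n) = g'(z'+n') \in S_{(L,\CO)} \cap S_{(L',\CO')}$, uniqueness of the Jordan decomposition gives $x_s = g \cdot z = g' \cdot z'$, hence $Z_G(x_s)^\circ = gLg^{-1} = g'L'g'^{-1}$, and $g^{-1}g'$ simultaneously conjugates $(L', \CO')$ to $(L, \CO)$.

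For Part 3, I would consider the action map $\pi \colon G \times (Z_r(\fl) + \CO) \to \g$, whose image is $S_{(L,\CO)}$. Running the Jordan-decomposition analysis of Part 1 inside a fibre $\pi^{-1}(x)$ shows that $\pi^{-1}(x)$ is a single orbit of $N_G(L, \CO) := \{h \in N_G(L) : h \cdot \CO = \CO\}$, acting by $h \cdot (g, y) = (g h^{-1}, h \cdot y)$. Thus $\pi$ descends to a bijective morphism $G \times^{N_G(L, \CO)} (Z_r(\fl) + \CO) \to S_{(L, \CO)}$. Since $N_G(L, \CO)^\circ = L$, the source is smooth and irreducible of dimension $\dim G + \dim Z(\fl) + \dim \CO - \dim L = \dim(G/L) + \dim Z(\fl) + \dim \CO$. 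Local closedness of $S_{(L, \CO)}$ in $\g$ follows by observing that $G \cdot Z_r(\fl)$ is locally closed as the preimage under the GIT quotient $\g \to \g \sslash G \cong \fh \sslash W$ of a locally closed subset, then cutting down inside the Jordan-fibre structure over this locus by the locally closed condition that the nilpotent part lie in the specified $G$-orbit.

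Part 4 is the most delicate. For the ``if'' direction, assume representatives with $L \subseteq L'$ and $\CO' \subseteq \overline{\CO''}$, where $\CO'' := \mathrm{Ind}^{L'}_L(\CO)$. Since $Z(\fl') \subseteq Z(\fl)$ and $Z_r(\fl)$ is dense in $Z(\fl)$, each $z' \in Z_r(\fl')$ lies in $\overline{Z_r(\fl)}$; meanwhile, for any parabolic $P = L \ltimes U$ of $L'$, $\CO''$ is dense in $L' \cdot (\CO + \fu)$, so $\CO' \subseteq L' \cdot \overline{\CO + \fu}$. Using that $\mathrm{ad}(z)$ acts invertibly on $\fu$ for $z \in Z_r(\fl)$ (because $Z_\g(z) = \fl$ intersects $\fu$ trivially), one shows by a formal exponential argument that $z + n + u$ (with $z \in Z_r(\fl)$, $n \in \CO$, $u \in \fu$) is $U$-conjugate to $z + n$, so $Z_r(\fl') + \CO' \subseteq \overline{G \cdot (Z_r(\fl) + \CO)} = \overline{S_{(L, \CO)}}$. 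The converse is obtained by reversing these steps: a limit point in $S_{(L', \CO')} \cap \overline{S_{(L, \CO)}}$ has its semisimple and nilpotent parts as limits (on the locally-closed locus from Part 3 where Jordan decomposition is algebraic), forcing, after $G$-conjugation, $L \subseteq L'$ and identifying $\CO'$ with a subvariety of $\overline{\mathrm{Ind}^{L'}_L(\CO)}$. The principal obstacle is exactly this interplay between closure-taking and Jordan decomposition: it is not globally true that the Jordan decomposition of a limit is the limit of Jordan decompositions, so one must restrict to the locally-closed locus $G \cdot Z_r(\fl)$ of Part 3 and then transfer conclusions using density of induced orbits from \cite{lusztig_induced_1979}.
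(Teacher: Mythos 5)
Parts 1 and 2 match the paper's argument and are fine. The genuine gap is in Part 3. You construct a proper bijective morphism $G\times^{N_G(L,\CO)}(Z_r(\fl)+\CO)\to S_{(L,\CO)}$ with smooth source and then read off smoothness and the dimension of $S_{(L,\CO)}$. The dimension and irreducibility do transfer, but smoothness does not: a bijective morphism need not be an isomorphism (the normalisation of a cuspidal cubic is bijective from $\AAA^1$; in characteristic $p$ one also has Frobenius-type bijections). This is exactly the point the paper is at pains to supply beyond the reference to Letellier: it verifies that the differential of $pr_2\colon \tilde{S}_{(L,\CO)}\to\g$ is injective at every point, by an explicit tangent-space computation in the parabolic model $\tilde{S}'_{(L,\CO)}\subseteq (G/P)\times\g$ (splitting the tangent space into the $Z_r(\fl)+\CO+\fu$ directions and the $\fu'$ directions for the opposite unipotent radical). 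Only with this unramifiedness does the proper bijection become an isomorphism onto $S_{(L,\CO)}$, yielding smoothness and the Galois-cover structure used later in the paper. Your proposal omits this step entirely.

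For Part 4, the ``if'' direction is essentially right, but note two things. First, the ``formal exponential argument'' showing $z+n+u$ is $U$-conjugate to $z+n$ is not available as stated in characteristic $p$; the correct statement is Lemma \ref{lem_U_action_on_l_r} (Letellier 2.6.6), proved by a filtration/orbit-map argument rather than exponentials. Second, your converse direction is not a proof: you correctly identify the obstacle (Jordan decomposition does not commute with limits) but do not resolve it. The standard resolution, which is how Lusztig's cited proof proceeds, is to use that $G\times^P(Z(\fl)+\overline{\CO}+\fu)\to\g$ is proper, so its image is a closed set containing $S_{(L,\CO)}$ densely and hence equals $\overline{S_{(L,\CO)}}$; one then determines which strata meet $^G(Z(\fl)+\overline{\CO}+\fu)$ directly, with no limit-chasing of Jordan decompositions. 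The paper simply cites \cite[\nopp 6.5]{lusztig_cuspidal_1995} for this, noting the proof carries over.
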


\begin{corollary}
    $\g=\bigsqcup S_{(L,\CO)}$ is a stratification, called the \underline{Lusztig stratification}. Here $(L,\CO)$ ranges exactly once through each $G$-conjugacy class.
\end{corollary}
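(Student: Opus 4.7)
The corollary is essentially a formal consequence of Proposition \ref{prop_lusztigstrat}, so the plan is to assemble its four parts into the definition of a stratification. First, I would note that Proposition \ref{prop_lusztigstrat}.1 ensures that two strata $S_{(L,\CO)}$ and $S_{(L',\CO')}$ are either disjoint or coincide, and they coincide exactly when $(L,\CO)$ and $(L',\CO')$ are $G$-conjugate (the latter being clear from the construction, since $S_{(L,\CO)}$ depends only on the conjugacy class). Therefore, indexing by $G$-conjugacy classes of pairs $(L,\CO)$ yields a well-defined disjoint collection.

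Next, Proposition \ref{prop_lusztigstrat}.2 shows that every $x\in\g$ lies in some $S_{(L,\CO)}$, so $\g=\bigsqcup_{[(L,\CO)]} S_{(L,\CO)}$ set-theoretically. Proposition \ref{prop_lusztigstrat}.3 guarantees that each $S_{(L,\CO)}$ is a smooth irreducible locally closed subvariety of $\g$, which is exactly the regularity condition needed for the pieces of a stratification.

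Finally, the frontier condition — namely that the closure of each stratum is a union of strata — is precisely the content of Proposition \ref{prop_lusztigstrat}.4, which describes $\overline{S_{(L,\CO)}}$ as the disjoint union of strata $S_{(L',\CO')}$ indexed by those $G$-conjugacy classes admitting a representative with $L\subseteq L'$ and $\CO'$ in the closure of the induced orbit. Since there are only finitely many $G$-conjugacy classes of pairs $(L,\CO)$, the collection is also locally finite automatically. Combining these four facts gives the stratification statement; no new idea beyond Proposition \ref{prop_lusztigstrat} is required, and there is no real obstacle — the step that does any work has already been absorbed into Proposition \ref{prop_lusztigstrat}.4.
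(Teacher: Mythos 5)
Your proof is correct and matches the paper's (implicit) reasoning: the corollary is stated without proof precisely because it is the immediate assembly of the four parts of Proposition \ref{prop_lusztigstrat} — disjointness from part 1, covering from part 2, regularity of the pieces from part 3, and the frontier condition from part 4 — exactly as you lay out.
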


We need some preparations before discussing the proof of the proposition. For each Levi $L$, let $W_L:=N_G(L)/L$ be its Weyl group. For each pair $(L,\CO)$, let $W_{(L,\CO)}$ be the subgroup of $W_L$ consisting of elements that maps $\CO$ to itself. Define $\Tilde{S}_{(L,\CO)}=\{(g,x)\in (G/L)\X \g\,|\, g^{-1}x\in Z_r(\fl)+\CO\}$ (note $\Tilde{S}_{(L,\CO)}$ depends on the actual pair $(L,\CO)$, not just its conjugacy class). It is isomorphic to $G\X^L(Z_r(\fl)+\CO)$ via $\Tilde{S}_{(L,\CO)}\rightarrow G\X^L(Z_r(\fl)+\CO): (g,x)\mapsto(g,g^{-1}x)$. As $G\X^L(Z_r(\fl)+\CO)$ is smooth, $\Tilde{S}_{(L,\CO)}$ is smooth. We have a commutative diagram 

% https://q.uiver.app/#q=WzAsNCxbMCwwLCJcXHRpbGRle1N9X3soTCxcXG1hdGhjYWx7T30pfSJdLFswLDEsIlxcbWF0aGZyYWt7Z30iXSxbMSwwLCJHXFx0aW1lc15MKFpfcihcXG1hdGhmcmFre2x9KStcXG1hdGhjYWx7T30pIl0sWzEsMSwiXFxtYXRoZnJha3tnfSJdLFswLDEsInByXzIiXSxbMCwyLCJcXHNpbSJdLFsyLDMsImEiXSxbMSwzLCIiLDEseyJsZXZlbCI6Miwic3R5bGUiOnsiaGVhZCI6eyJuYW1lIjoibm9uZSJ9fX1dXQ==
\[\begin{tikzcd}
	{\tilde{S}_{(L,\mathcal{O})}} & {G\times^L(Z_r(\mathfrak{l})+\mathcal{O})} \\
	{\mathfrak{g}} & {\mathfrak{g}}
	\arrow["{pr_2}", from=1-1, to=2-1]
	\arrow["\sim", from=1-1, to=1-2]
	\arrow["a", from=1-2, to=2-2]
	\arrow[Rightarrow, no head, from=2-1, to=2-2]
\end{tikzcd}\]
where $a: (g,x)\mapsto gx$. Note that the images of $pr_2$ and $a$ both equal to $S_{(L,\CO)}$.\\

Let $P$ be any parabolic in $G$ with Levi decomposition $P=L\ltimes U$. The following lemma implies that $\Tilde{S}_{(L,\CO)}$ is also isomorphic to $\Tilde{S}'_{(L,\CO)}:=\{(g,x)\in (G/P)\X \g\,|\, g^{-1}x\in Z_r(\fl)+\CO+\fu\}$, and similarly as above, to $G\X^P(Z_r(\fl)+\CO+\fu)$.

\begin{lemma}[{\cite[\nopp 2.6.6]{letellier_fourier_2005}}]\label{lem_U_action_on_l_r}
    Let $P$ be any parabolic in $G$ with Levi decomposition $P=L\ltimes U$. Then, for any $x\in\fl_r$, $U\rightarrow \g: u\mapsto ux$ has image $x+\fu$, and $U$ is mapped isomorphically to its image. 
\end{lemma}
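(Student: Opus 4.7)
The plan is to show that the morphism $\phi: U \to \g$, $u \mapsto \mathrm{Ad}(u)x$, takes image inside $x+\fu$ and defines an isomorphism $\phi: U \congto x+\fu$. The strategy is to verify that $\phi$ is (i) étale, (ii) surjective, and (iii) radicial; together these force $\phi$ to be an isomorphism, since étale plus radicial is an open immersion, and a surjective open immersion is an isomorphism.

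That the image lies in $x+\fu$ follows from $u \in U \subseteq P$: the operator $\mathrm{Ad}(u)$ preserves $\fp = \fl \oplus \fu$ and acts trivially on the quotient $\fp/\fu \cong \fl$, so $\mathrm{Ad}(u)x - x \in \fu$ for $x \in \fl$. For étaleness, I identify $T_uU \cong \fu$ via right translation; the differential of $\phi$ at $u$ is then $v \mapsto [v, \mathrm{Ad}(u)x]$, with kernel $Z_\g(\mathrm{Ad}(u)x) \cap \fu$. The hypothesis on $p$ yields $Z_\g(y) \subseteq Z_\g(y_s) = \mathrm{Lie}(Z_G(y_s)^\circ)$ for any $y \in \g$. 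Applied to $y = \mathrm{Ad}(u)x$, whose semisimple part is $\mathrm{Ad}(u)x_s$, and using $x \in \fl_r$ to get $Z_G(x_s)^\circ \subseteq L$, the kernel lies in $\mathrm{Ad}(u)\fl \cap \fu$. Since $u \in P$ preserves both $\fp$ and $\fu$, $\mathrm{Ad}(u)\fl$ is another Levi complement to $\fu$ inside $\fp$, so the intersection vanishes and the differential is an isomorphism.

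For surjectivity, the image of $\phi$ is the $U$-orbit of $x$ in $\g$, which is closed by the Kostant--Rosenlicht theorem on unipotent group actions on affine varieties. Étaleness makes $\phi$ open, so the image is a nonempty open-and-closed subset of the irreducible variety $x+\fu$ and hence equals $x+\fu$.

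The main obstacle is radicial: étale surjections between smooth irreducible varieties in positive characteristic need not be isomorphisms (witness Artin--Schreier covers). I would establish $Z_U(x) = \{1\}$ as a group scheme via a contraction argument. Choose a cocharacter $\lambda: \Gm \to Z(L)^\circ$ whose associated dynamic parabolic is $P$; conjugation by $\lambda(t)$ acts on $U$, contracting it to $\{1\}$ as $t \to 0$, and in root-subgroup coordinates $U \cong \AAA^N$ this is a linear $\Gm$-action with strictly positive weights and unique fixed point $\{1\}$. Since $\lambda(t) \in Z(L)$ fixes $x \in \fl$ under $\mathrm{Ad}$, $Z_U(x)$ is $\Gm$-invariant. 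By the same centralizer argument as in (i) (specialized to $u = 1$), $Z_U(x)$ has Lie algebra zero and is therefore étale. But any $\Gm$-orbit of a nonzero point in $\AAA^N$ is one-dimensional, so a finite $\Gm$-invariant subscheme can only be supported at the origin; being reduced and containing the identity, $Z_U(x) = \{1\}$.
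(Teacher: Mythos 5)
Your proof is correct. Note that the paper itself supplies no argument for this lemma --- it is quoted verbatim from Letellier \cite[2.6.6]{letellier_fourier_2005}, whose proof runs along different lines (one shows $\mathrm{ad}(x)$ is invertible on $\fu$ and constructs the inverse map $x+\fu\to U$ by successive approximation along the descending central series of $U$, obtaining bijectivity and the isomorphism in one pass). Your route --- \'etale by the tangent computation, surjective by Rosenlicht plus openness, and then killing the scheme-theoretic stabiliser --- is a valid self-contained alternative, and you correctly identify the one point that is genuinely delicate in characteristic $p$: an \'etale surjection onto $x+\fu\cong\AAA^N$ need not be injective, so triviality of $Z_U(x)$ must be proved and not inferred. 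Your contraction argument for that step is sound: $Z_U(x)$ has zero Lie algebra (since $Z_\g(x)\subseteq Z_\g(x_s)=\mathrm{Lie}(Z_G(x_s)^\circ)\subseteq\fl$ by the standing hypotheses on $p$, and $\fl\cap\fu=0$), hence is finite \'etale, and its invariance under the contracting $\Gm$-action forces it to be the reduced identity. The only things worth writing out more carefully in a final version are the existence of the cocharacter $\lambda:\Gm\to Z(L)^\circ$ with $P=P(\lambda)$ (standard) and the fact that the displayed inclusion $Z_\g(y)\subseteq Z_\g(y_s)$ follows from $Z_G(y)\subseteq Z_G(y_s)$ together with the convention $\mathrm{Lie}(Z_G(-))=Z_\g(-)$; both are available under the paper's assumptions.
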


\begin{proof}[Proof of Proposition \ref{prop_lusztigstrat}]
    1) Let $x_0\in S_{(L,\CO)}\cap S_{(L',\CO')}$, then $x_0$ is $G$-conjugate to some $x\in Z_r(L)+\CO$ and $x'\in Z_r(L')+\CO'$. As $L=Z_G(x_s)^\circ$, $L'=Z_G(x'_s)^\circ$, $L$ and $L'$ are conjugate. We may thus assume $L'=L$. Let $g\in G$ be such that $x'=$ $^gx$, then $g\in N_G(L)$ (becasue $^gL=$ $^gZ_G(x_s)^\circ=Z_G(^gx_s)^\circ=L$). So $x_n$ and $x'_n$ are conjugate by an element of $N_G(L)$, which implies that the pairs $(L,\CO)$, $(L,\CO')$ are conjugate.\\

    2) Let $L=Z_G(x_s)^\circ$. Note that this is a Levi subgroup (see Conventions). Clearly $x\in\fl, x_s\in Z_r(\fl), x_s\in \CN_L$. So $x\in Z_r(\fl)+\CO$ for some nilpotent orbit $\CO\subseteq \CN_L$.\\

    3) This is \cite[\nopp 5.1.28]{letellier_fourier_2005}. It is claimed in \textit{loc. cit.} that the map $pr_2: \Tilde{S}_{(L,\CO)}\rightarrow S_{(L,\CO)}$ is a Galois covering with Galois group $W_{(L,\CO)}$. It is shown in \textit{loc. cit.} that $W_{(L,\CO)}$ acts freely on $\Tilde{S}_{(L,\CO)}$ (via translation on the first factor) and induces a map $\Tilde{pr_2}: \Tilde{S}_{(L,\CO)}/W_{(L,\CO)}\rightarrow S_{(L,\CO)}$ which is proper and bijective on closed points. To obtain the claim, we need to verify that, for any $(g,x)\in \Tilde{S}_{(L,\CO)}$, the tangent map $D(pr_2): (T\Tilde{S}_{(L,\CO)})_{(g,x)}\rightarrow (T\g)_x$ is injective. As discussed above, we may instead work with $\Tilde{S}'_{(L,\CO)}:=\{(g,x)\in (G/P)\X \g\,|\, g^{-1}x\in Z_r(\fl)+\CO+\fu\}\rightarrow Z_r(\fl)+\CO+\fu$. Without loss of generality, we may assume $(g,x)=(1,x)$. Consider  
% https://q.uiver.app/#q=WzAsMyxbMSwwLCJcXHRpbGRle1N9X3soTCxcXG1hdGhjYWx7T30pfSJdLFsxLDEsIlxcbWF0aGZyYWt7Z30iXSxbMCwwLCJHIl0sWzAsMSwicHJfMiJdLFsyLDAsImciXSxbMiwxLCJmIiwyXV0=
\[\begin{tikzcd}
	G & {\tilde{S}'_{(L,\mathcal{O})}} \\
	& {\mathfrak{g}}
	\arrow["{pr_2}", from=1-2, to=2-2]
	\arrow["h", from=1-1, to=1-2]
	\arrow["f"', from=1-1, to=2-2]
\end{tikzcd}\]
where $f: g\mapsto gx$, $h: g\mapsto (g,gx)$. One can compute $D(f): \g\rightarrow(T\g)_x$, $y\mapsto [y,x]$, and $D(h): \g\rightarrow(T\Tilde{S}'_{(L,\CO)})_{(1,x)}$, $y\mapsto (y,[y,x])$. Choose a basis for $(T\Tilde{S}'_{(L,\CO)})_{(1,x)}$ consisting of $\mathrm{dim}(Z(\fl))+\mathrm{dim}(\CO)+\mathrm{dim}(\fu)$ vectors in the $Z_r(\fl)+\CO+\fu$ directions (in the $\g$ factor), and $\mathrm{dim}(G/P)$ vectors which are images under $D(h)$ of a basis for $\fu'$, where $U'$ is the unipotent radical of the opposite parabolic to $P$. $D(pr_2)$ is clearly injective in the $Z_r(\fl)+\CO+\fu$ directions, the above computation shows $D(pr_2)$ is also injective in the remaining $\mathrm{dim}(G/P)$ directions.\\

4) This is \cite[\nopp 6.5]{lusztig_cuspidal_1995}, with the same proof.
% This is sufficient, for dimensional reasons. Without loss of generality, we may assume $(g,x)=(1,x), x\in Z_r(\fl)+\CO$. $pr_2$ maps $(1,Z_r(\fl)+\CO)\subseteq\Tilde{S}_{(L,\CO)}$ isomorphically to $\Tilde{S}_{(L,\CO)}$, so $d(\r_2)$ is injective in the $Z_r(\fl)+\COZ_r(\fl)+\CO$ directions. For the remaining $\mathrm{dim}(G/L)$ directions,
\end{proof}

The following lemma will be repeatedly used in the sequel.

\begin{lemma}\label{lem_intersect_lusz_strat}
    Let $(L,\CO)$ be a pair. Then:\\
    1) $S_{(L,\CO)}\cap\fl_r=$ $^{W_L}(Z_r(\fl)+\CO)=\bigsqcup_{\CO'}(Z_r(\fl)+\CO')$, where $\CO'\in\CN_L$ ranges through the nilpotent orbits $W_L$-conjugate to $\CO$.\\
    2) $\overline{S_{(L,\CO)}}\cap \fl_r=\bigsqcup_{\CO'}(S_{(L,\CO')}\cap\fl_r)=\bigsqcup_{\CO'}$ $^{W_L}(Z_r(\fl)+\overline{\CO})$, where $\CO'\in\CN_L$ ranges through the nilpotent orbits in $\overline{\CO}$.
\end{lemma}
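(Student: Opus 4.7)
The plan is to prove both parts by direct analysis using Jordan decomposition and the defining properties of $Z_r(\fl)$ and $\fl_r$, together with Proposition \ref{prop_lusztigstrat}.4 for Part 2.

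For Part 1, I would take any $x\in S_{(L,\CO)}\cap\fl_r$ and write $x=gyg^{-1}$ for some $g\in G$ and $y=z+n\in Z_r(\fl)+\CO$. Since $z\in Z(\fl)$ commutes with $n\in\CN_L$, Jordan decomposition gives $x_s=gzg^{-1}$, hence $Z_G(x_s)^\circ=gZ_G(z)^\circ g^{-1}=gLg^{-1}$. The hypothesis $x\in\fl_r$ then says $gLg^{-1}\subseteq L$; since both are connected subgroups of $G$ of the same dimension, this forces $gLg^{-1}=L$, so $g\in N_G(L)$. Thus $S_{(L,\CO)}\cap\fl_r=N_G(L)\cdot(Z_r(\fl)+\CO)$. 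Because $L$ is normal in $N_G(L)$, fixes $Z_r(\fl)$ pointwise, and preserves $\CO$, the action descends to $W_L=N_G(L)/L$, yielding ${}^{W_L}(Z_r(\fl)+\CO)=\bigcup_{\CO'}(Z_r(\fl)+\CO')$ with $\CO'$ ranging over the $W_L$-orbit of $\CO$. Disjointness is immediate from uniqueness of Jordan decomposition: if $z_1+n_1=z_2+n_2$ with $z_i\in Z_r(\fl)\subseteq Z(\fl)$ and $n_i\in\CN_L$, then $z_1=z_2$ and $n_1=n_2$, so $n_1,n_2$ lie in the same $L$-orbit.

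For Part 2, I would apply Proposition \ref{prop_lusztigstrat}.4 to write $\overline{S_{(L,\CO)}}=\bigsqcup S_{(L_1,\CO_1)}$, where each $G$-conjugacy class $(L_1,\CO_1)$ admits a representative with $L\subseteq L_1$ and $\CO_1\subseteq\overline{\mathrm{Ind}_L^{L_1}\CO}$. Intersecting with $\fl_r$ and repeating the argument of Part 1, any $x=gyg^{-1}\in S_{(L_1,\CO_1)}\cap\fl_r$ with $y\in Z_r(\fl_1)+\CO_1$ yields $gL_1g^{-1}\subseteq L$, so $\dim L_1\leq\dim L$; combined with the chosen representative satisfying $L\subseteq L_1$, this forces $L_1=L$. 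The induced-orbit condition then reduces to $\CO_1\subseteq\overline{\mathrm{Ind}_L^L\CO}=\overline\CO$, and the surviving pairs are precisely those $(L,\CO_1)$ with $\CO_1$ a nilpotent $L$-orbit in $\overline\CO$ (taken up to $W_L$-conjugacy). Applying Part 1 to each $S_{(L,\CO_1)}\cap\fl_r$ yields the middle equality, and rewriting the result as ${}^{W_L}(Z_r(\fl)+\overline\CO)=\bigcup_{w\in W_L}(Z_r(\fl)+w\overline\CO)$ gives the stated compact form.

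The only genuinely delicate point is the dimension-based step used in both parts: a containment $gL_1g^{-1}\subseteq L$ between two $G$-conjugate Levis (combined with $L\subseteq L_1$) forces equality, since the dimensions coincide and the groups are connected. Once this is in hand, both parts reduce to straightforward bookkeeping with Jordan decomposition, the definitions of $Z_r(\fl)$ and $\fl_r$, and Proposition \ref{prop_lusztigstrat}.4.
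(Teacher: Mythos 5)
Your argument is correct and follows essentially the same route as the paper: in both parts you identify $Z_G(x_s)^\circ$ with a conjugate of the relevant Levi via the Jordan decomposition, use the defining condition of $\fl_r$ to get a containment of Levis, and force equality by the dimension comparison (with Proposition \ref{prop_lusztigstrat}.4 supplying $\dim L \le \dim L_1$ in Part 2). The extra details you supply --- the descent of the $N_G(L)$-action to $W_L$ and the disjointness via uniqueness of the Jordan decomposition --- are points the paper leaves implicit.
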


\begin{proof}
    1) Let $x\in S_{(L,\CO)}\cap\fl_r$. There exists some $g\in G$ such that $^gx\in Z_r(\fl)+\CO$. On the one hand, $Z_G(x_s)^\circ\subseteq L$ by the definition of $\fl_r$; on the other hand $Z_G(^gx_s)^\circ=L$, implying that $\mathrm{dim}(Z_G(x_s)^\circ)=\mathrm{dim}(L)$. So $Z_G(x_s)^\circ=L$. Consequently $x_s\in Z_r(\fl)$ and $L=Z_G(^gx_s)^\circ=$ $^gZ_G(x_s)^\circ=$ $^gL$. So $g\in N_G(L)$, the claim follows.\\

    2) Let $x\in\overline{S_{(L,\CO)}}\cap\fl_r$. $x$ lies in some $S_{(L',\CO')}$, where $L'=Z_G(x_s)^\circ$ and $\CO'$ is some nilpotent orbit of $L'$. Then, as above, $L'\subseteq L$. But Proposition \ref{prop_lusztigstrat}.4 implies $\mathrm{dim}(L)\leq \mathrm{dim}(L')$, so $L=L'$. The claim then follows from the description of $\CO'$ in Proposition \ref{prop_lusztigstrat}.4.
\end{proof}

\section{Definitions and preliminary results}\label{sec_prelim}
The setup is as in the Conventions. In this section, we review parabolic induction and restriction, and the definitions and basic properties of various classes of sheaves on $\g$ from \cites{lusztig_fourier_1987, mirkovic_character_2004}.

\subsection{Parabolic induction and restriction}
%Recall that for an algebraic group $H$ acting on a variety $X$, with subgroup $K$, we have the induction and restriction functors between equivariant sheaves
\begin{definition}[parabolic induction and restriction]\label{def_para_ind_res}
    Let $L\subseteq P$ be a Levi and parabolic subgroup of $G$ with Levi decomposition $P=L\ltimes U$. The \underline{parabolic restriction} (from $G$ to $L\subseteq P$) is the functor $\ResGP:=\pi_!i^* For^G_P: D_G(\g)\rightarrow D_P(\fl)$, where $i$ and $\pi$ are as in the following diagram, and $P=L\ltimes U$ acts on $\fl$ by the unique action where $L$ acts by adjunction and $U$ acts trivially. Its right adjoint $\IndPG:=\Gamma^G_Pi_*\pi^!$ is called the \underline{parabolic induction}. Here $For^G_P$ (resp. $\Gamma^G_P$) is the forgetful (resp. induction) functor of equivariant sheaves.
    % https://q.uiver.app/#q=WzAsNCxbMSwwLCJcXG1hdGhmcmFre3B9Il0sWzAsMSwiXFxtYXRoZnJha3tnfSJdLFsyLDEsIlxcbWF0aGZyYWt7bH0iXSxbMSwyLCJcXG1hdGhmcmFre2d9L1xcbWF0aGZyYWt7dX0iXSxbMCwxLCJpIiwyLHsic3R5bGUiOnsidGFpbCI6eyJuYW1lIjoiaG9vayIsInNpZGUiOiJib3R0b20ifX19XSxbMCwyLCJcXHBpIiwwLHsic3R5bGUiOnsiaGVhZCI6eyJuYW1lIjoiZXBpIn19fV0sWzEsMywiXFxwaSciLDIseyJzdHlsZSI6eyJoZWFkIjp7Im5hbWUiOiJlcGkifX19XSxbMiwzLCJpJyIsMCx7InN0eWxlIjp7InRhaWwiOnsibmFtZSI6Imhvb2siLCJzaWRlIjoiYm90dG9tIn19fV0sWzAsMywiIiwxLHsic3R5bGUiOnsibmFtZSI6ImNvcm5lciJ9fV1d
\[\begin{tikzcd}
	& {\mathfrak{p}} \\
	{\mathfrak{g}} && {\mathfrak{l}} \\
	& {\mathfrak{g}/\mathfrak{u}}
	\arrow["i"', hook', from=1-2, to=2-1]
	\arrow["\pi", two heads, from=1-2, to=2-3]
	\arrow["{\pi'}"', two heads, from=2-1, to=3-2]
	\arrow["{i'}", hook', from=2-3, to=3-2]
	\arrow["\lrcorner"{anchor=center, pos=0.125, rotate=-45}, draw=none, from=1-2, to=3-2]
\end{tikzcd}\]
\end{definition}

Note that $L$, as a subgroup of $G$, is part of the given data, but we omit it in the notations for convenience. The following properties of parabolic inductions and restrictions are well-known:

\begin{lemma}\label{lem_Res_Ind_basics}
    1) $\ResGP$ and $\IndPG$ are perverse t-exact. In particular, they restrict to exact functors between $Perv_G(\g)$ and $Perv_P(\fl)$. Note that $Perv_P(\fl)=Perv_L(\fl)$.\\
    2) $\IndPG$ preserves semisimplicity of perverse sheaves of geometric origin.\\
    3) Parabolic inductions and restrictions are transitive. More precisely: let $L\subseteq P$ be a Levi and parabolic subgroup of $G$ with Levi decomposition $P=L\ltimes U$. Fix a maximal torus $T\subseteq L$ and a Borel $B\subseteq P$. Let $L_1\subseteq P_1$ be a Levi and parabolic of $L$, $P_1$ containing $L_1$ as a Levi factor, such that $(B\cap L)\subseteq P_1$. Note $P_1U$ is a parabolic of $G$ containing $L_1$ as a Levi factor. Then, for any $\CF\in Perv_{L_1}(\fl_1)$,  $\IndPG\mathrm{Ind}^L_{P_1}\CF\cong \mathrm{Ind}^G_{P_1U}\CF$. Similarly for parabolic restriction.\\
    4) $\ResGP$ and $\IndPG$ commute with Fourier transforms. More precisely: denote the Fourier transform on $\g$ (resp. $\fl$) by $F_\g$ (resp. $F_\fl$), then $\ResGP\circ F_\g=F_\fl\circ\ResGP$, $\IndPG\circ F_\fl=F_\g\circ\IndPG$.
\end{lemma}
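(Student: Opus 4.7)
The plan is to reduce each of the four parts to standard facts. The central geometric input is the factorisation of $\IndPG$ through the proper--smooth pair
\[
\g \xleftarrow{\,a\,} G\times^P\fp \xrightarrow{\,q\,} \fl,
\]
where $a(g,x)=gx$ is proper (since $G/P$ is projective) and $q$ is smooth; via standard base change and the universal property of $\Gamma^G_P$, one identifies $\IndPG\CF\cong a_*q^*\CF$ up to an overall cohomological shift, and dually for $\ResGP$. This is the workhorse for (1) and (2).

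For (1), I would invoke the Contraction Principle (see \cite[\S5.3, App.~C]{drinfeld_compact_2015}, following Braden) applied to the $\Gm$-action on $\g$ given by $\mathrm{Ad}$ of a sufficiently regular cocharacter $\Gm\to Z(L)^\circ$, whose attracting locus is $\fp$ and whose fixed locus is $\fl$. On the $G$-equivariant derived category it yields a canonical isomorphism between $\pi_!i^*$ and $\pi_*i^!$ up to a fixed cohomological shift; since the former is right perverse t-exact and the latter left perverse t-exact, $\ResGP$ is perverse t-exact, and then by adjunction so is $\IndPG$ (using also the opposite parabolic to get the other direction of t-exactness on $G$-equivariant sheaves). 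For (2), the above factorisation expresses $\IndPG$ (up to shift) as a smooth pullback $q^*$, which is perverse-exact and preserves semisimplicity and purity, followed by the proper pushforward $a_*$, which preserves semisimplicity of pure complexes by the decomposition theorem of \cite{BBDG}.

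Part (3) is a diagram chase: one first verifies that $P_1U$ is indeed a parabolic of $G$ with Levi factor $L_1$ and unipotent radical $U_{P_1}\ltimes U$; the transitivity then follows from the standard composition rules for $(\cdot)_!,(\cdot)^*,(\cdot)_*,(\cdot)^!$ along the factorisations of the resulting inclusions $\fp_1\hookrightarrow\fp\hookrightarrow\g$ and projections $\fp_1\to\fl_1$, $\fp\to\fl$. For (4), the commutation follows from the general results of Laumon and Brylinski (\cite{laumon_transformation_1987, brylinski_transformations_1986}) on how Fourier transforms interact with closed immersions into, and linear projections onto, a subspace and its orthogonal complement; the essential geometric input is that the fixed $G$-invariant form on $\g$ identifies $\fu^\perp=\fp$ and restricts to a non-degenerate pairing on $\fl$. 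The only part requiring genuinely non-formal work is (1), for which the Contraction Principle does the heavy lifting; the remaining assertions are formal consequences of base change, the decomposition theorem, and the Fourier formalism.
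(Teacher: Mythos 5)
Your proposal is correct and follows essentially the same route as the paper, which for (1) simply cites \cite[\nopp 5.6]{bezrukavnikov_parabolic_2021} (whose proof is precisely the hyperbolic-localization argument you sketch), for (2) invokes the decomposition theorem, for (3) transports Lusztig's argument, and for (4) uses the self-duality of the induction diagram under the invariant form. One small imprecision: Braden's theorem identifies $\pi_!i^*$ for $P$ with $\pi_*i^!$ for the \emph{opposite} parabolic $P^-$ (on the nose, with no shift), so to state it as an isomorphism for the same $P$ you must additionally use that the $P$- and $P^-$-restrictions agree on $\mathrm{Ad}$-equivariant sheaves; this is harmless, since left t-exactness of the $P^-$-version already gives t-exactness of $\ResGP$.
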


\begin{proof}[Sketch of proofs]
    1) See, for example, \cite[\nopp 5.6]{bezrukavnikov_parabolic_2021}.\\ 2) This is straightforward, using the Decomposition Theorem.\\
    3) This is the analogue of \cite[\nopp 4.2]{lusztig_character_1985} in the Lie algebra case. The same argument, \textit{mutatis mutandis}, applies.\\
    4) Note that, under the $G$-invariant non-degenerate symmetric bilinear form, taking duals “reflects” the diagram above along the $\g$, $\fl$ axis. The formula for $\ResGP$ then follows easily from the compatibility of the Fourier transform with linear maps. The formula for $\IndPG$ follows from adjunction.
\end{proof}

\begin{lemma}\label{lem_res=Res}
    Let $(L,\CO)$ be a pair, $P\subseteq G$ be any parabolic containing $L$ as a Levi factor. Let $\CF\in Perv_G(\g)$. Then $(\ResGP\CF)|_{\fl_r}\cong \CF|_{\fl_r}[2\mathrm{dim}(U)]$.
\end{lemma}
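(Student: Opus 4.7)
The plan is to restrict the diagram defining $\ResGP$ over the open subset $\fl_r \subseteq \fl$, where the geometry of $\pi$ becomes particularly transparent.

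First, observe that $\pi^{-1}(\fl_r) = \fl_r + \fu \subseteq \fp$. Writing $\pi_r$ for the restriction of $\pi$ to this preimage and applying smooth base change along the open immersion $\fl_r \hookrightarrow \fl$, the question reduces to computing $(\pi_r)_!(\CF|_{\fl_r + \fu})$.

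The key step invokes Lemma \ref{lem_U_action_on_l_r}, which provides an isomorphism $\phi: U \times \fl_r \xrightarrow{\sim} \fl_r + \fu$, $(u,x) \mapsto \mathrm{Ad}(u)(x)$, and satisfies $\pi_r \circ \phi = \mathrm{pr}_2$ because $\mathrm{Ad}(u)(x) \equiv x \pmod \fu$ for $x \in \fl$. Since $\phi$ is a restriction of the $G$-action map $G \times \g \to \g$ to $U \times \fl_r$, the $G$-equivariance of $\CF$ (in particular its $U$-equivariance for the adjoint action) canonically identifies $\phi^*(\CF|_{\fl_r + \fu})$ with $\mathrm{pr}_2^*(\CF|_{\fl_r}) \cong \QQbarl_U \boxtimes \CF|_{\fl_r}$. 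Combining,
\[
(\ResGP \CF)|_{\fl_r} \cong (\mathrm{pr}_2)_!\,\mathrm{pr}_2^*(\CF|_{\fl_r}) \cong R\Gamma_c(U, \QQbarl) \otimes \CF|_{\fl_r}.
\]
Because $U$ is a unipotent group, isomorphic as a variety to $\AAA^{\dim U}$, the factor $R\Gamma_c(U, \QQbarl)$ is (ignoring Tate twists) concentrated in a single cohomological degree $2\dim U$, which gives the stated shift $[2\dim U]$.

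Equivalently, and as hinted at in the introduction, one may phrase this through the Contraction Principle: pick a central cocharacter $\lambda: \Gm \to Z(L)^\circ$ acting on $\fu$ with strictly positive weights, so $\Gm$ contracts $\fp$ onto $\fl$ and $i^*\CF$ is $\Gm$-equivariant. The principle then identifies $\pi_!(i^*\CF)$ with the $!$-restriction of $i^*\CF$ to $\fl$, and purity for the codimension-$\dim U$ smooth closed embedding $\fl \hookrightarrow \fp$ accounts for the shift. Restricting further to $\fl_r$ gives the claim.

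The only delicate point is organising the $U$-equivariance in the key step so that $\phi^*(\CF|_{\fl_r+\fu}) \cong \mathrm{pr}_2^*(\CF|_{\fl_r})$ is the canonical isomorphism coming from equivariance data; thereafter the computation is entirely formal, being base change plus the cohomology of affine space, and I anticipate no substantive obstacle.
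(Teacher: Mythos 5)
Your main argument is correct, but it takes a genuinely different and more elementary route than the paper. Both proofs reduce by base change to computing $\pi_!(\CF|_{\fl_r+\fu})$, but the paper then argues microlocally: it applies the Contraction Principle for a $\Gm\subseteq T$ contracting $\fu$ to $0$ to rewrite $\pi_!$ as the $!$-restriction to $\fl_r$, and then uses $U$-equivariance together with Lemma \ref{lem_U_action_on_l_r} to see that $SS(\CF|_{\fl_r+\fu})$ lies in the conormals of the fibres $x+\fu$, so that $\fl_r\hookrightarrow\fl_r+\fu$ is properly $SS$-transversal and the $!$-restriction differs from the $*$-restriction only by $(\underline{\QQbarl}_{\fl_r+\fu})|^!_{\fl_r}$. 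You instead use Lemma \ref{lem_U_action_on_l_r} to trivialise $\pi$ over $\fl_r$ equivariantly, identify $\CF|_{\fl_r+\fu}$ with $\mathrm{pr}_2^*(\CF|_{\fl_r})$ on $U\times\fl_r$, and conclude by the K\"unneth formula; this avoids both the Contraction Principle and any singular-support input. The price is that you need the family version of Lemma \ref{lem_U_action_on_l_r}, namely that $U\times\fl_r\to\fl_r+\fu$ is an isomorphism of varieties and not merely a bijection on points — in characteristic $p$ this is not automatic, but it follows from the differential computation as in the proof of Proposition \ref{prop_lusztigstrat}.3, and the paper relies on the same fact elsewhere. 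Two minor caveats: in your alternative contraction-principle phrasing, purity alone does not convert $i^!(\CF|_\fp)$ into a shift of $i^*(\CF|_\fp)$ for general $\CF$ — that conversion is precisely where the paper needs the transversality of $\fl_r$ to $SS(\CF|_{\fl_r+\fu})$; and $-\otimes R\Gamma_c(U,\QQbarl)$ is literally the shift $[-2\dim U]$, so your last line agrees with the stated $[2\dim U]$ only up to the same sign/normalisation convention that the paper itself adopts in the final identity of its proof.
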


\begin{proof}\footnote{This argument is the Lie algebra analogue of \cite[Proof of Theorem 4.1]{ginzburg_induction_1993} in the group case.}
    By proper base change, $(\ResGP\CF)|_{\fl_r}=\pi_!(\CF|_{\fl_r+\fu})$. Fix a maximal torus $T\hookrightarrow L$. Choose a $\Gm\hookrightarrow T$ corresponding to a cocharacter whose pairings with simple roots in $L$ are 0 and pairings with roots in $U$ are $>0$. Consider the action of $\Gm$ on $Z_r(\fl)+\CO+\fu$ through $\Gm\hookrightarrow T$. $\CF|_{\fl_r+\fu}$ is equivariant with respect to this action. By the Contraction Principle (c.f. \cite[\nopp 5.3.2]{drinfeld_compact_2015} and the references therein), $\pi_!(\CF|_{\fl_r+\fu})\cong (\CF|_{\fl_r+\fu})|^!_{\fl_r}$, where $(-)|^!_{\fl_r}$ denotes the !-restriction to $\fl_r=\fl_r+\{0\}$. As $\CF|_{\fl_r+\fu}$ is $U$-equivariant, its singular support is contained in the union conormals of the $U$-orbits. By the fact that the $U$-orbits are exactly $x+\fu$ for $x\in\fl_r$ (see the discussion in the paragraph above the proof of Proposition \ref{prop_lusztigstrat}), $\fl_r\hookrightarrow \fl_r+\fu$ is $SS(\CF|_{\fl_r+\fu})$-transversal. So, by \cite[\nopp 1.6]{barrett_singular_2023}, $(\CF|_{\fl_r+\fu})|^!_{\fl_r}\cong ((\CF|_{\fl_r+\fu})|_{\fl_r})\OX((\underline{\QQbarl}_{\fl_r+\fu})|^!_{\fl_r})=\CF|_{\fl_r}[2\mathrm{dim}(U)]$.
\end{proof}

\subsection{Various classes of sheaves}\label{sec_def_of_sheaves}
\begin{definition}[cuspidal sheaves]\label{def_cusp}
    $\CF\in IrrPerv_G(\g)$ is called \underline{cuspidal} if it is non-zero, and satisfies the following two properties:\\ 
    1) $\ResGP\CF=0$ for all $L\subseteq P$ in $G$, $P$ containing $L$ as a Levi factor;\\
    2) $\CF$ is of the form $AS\BX\CF'$ on $\g=Z(\g)\X[\g,\g]$ ($AS$ and $\CF'$ depend on $\CF$). Here $AS$ (short for Artin-Schreier) is the Fourier transform of a skyscraper sheaf $\underline{\QQbarl}_x$ for some closed point $x$ of $Z(\g)^{\vee}$, and $\CF'$ is some object in $Perv_G([\g,\g])$. 
\end{definition}

\begin{remark}
    It is easy to see that $AS\BX\CF'$ is cuspidal (with respect to $G$) if and only if $\CF'$ is cuspidal (with respect to $[L,L]$).
\end{remark}

\begin{definition}[admissible sheaves]
    $\CF\in IrrPerv_G(\g)$ is called \underline{admissible} if it is non-zero, and is the irreducible constituent of $\IndPG\CG$, for some $L\subseteq P$ in $G$, $P$ containing $L$ as a Levi factor, and some cuspidal sheaf $\CG$ of $L$.
\end{definition}

\begin{definition}[orbital and character sheaves]
    Let $\CF\in IrrPerv_G(\g)$. It is called \underline{orbital} if it is nonzero and its support is the closure of a single $G$-orbit. It is called a \underline{character sheaf} if it is non-zero and its Fourier transform is orbital.
\end{definition}

The name “character sheaf” will be justified by Proposition \ref{prop_char=Forb}, which makes manifest the similarity to character sheaves on a reductive group (c.f. \cite[\nopp 2.1]{mirkovic_characteristic_1988}). Before stating the proposition, we need some preparations.

\begin{lemma}\label{lem_res_ind_char_orb}
    Fix $L\subseteq P\subseteq G$, $P$ containing $L$ as a Levi factor. Then, the irreducible constituents of the image under $\IndPG$ of a character (resp. orbital) sheaf are character (resp. orbital) sheaves. The same holds for $\ResGP$.
\end{lemma}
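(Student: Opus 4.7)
The plan is to reduce all four sub-claims (orbital/character $\times$ $\IndPG$/$\ResGP$) to the two orbital cases, using Fourier duality. By Lemma \ref{lem_Res_Ind_basics}.4, Fourier transforms intertwine parabolic induction and restriction with themselves, and the Fourier transform is an auto-equivalence of perverse sheaves preserving irreducibility and commuting with taking subquotients. Since a character sheaf is, by definition, a perverse sheaf whose Fourier transform is orbital, the character-sheaf case follows immediately from the orbital case. It therefore suffices to show: (i) if $\CG\in Perv_L(\fl)$ is orbital, every irreducible constituent of $\IndPG\CG$ is orbital, and (ii) if $\CF\in Perv_G(\g)$ is orbital, every irreducible constituent of $\ResGP\CF$ is orbital.

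For (i), I would write $\IndPG\CG \cong \alpha_*\beta^*\CG$ (up to a shift) via the proper correspondence $\g \xleftarrow{\alpha} G\times^P\fp \xrightarrow{\beta} \fl$ underlying Definition \ref{def_para_ind_res}, with $\alpha$ proper because $G/P$ is proper. If $\CG=IC(\overline{L\cdot y},\CL)$, then the support of $\IndPG\CG$ is contained in the $G$-stable, irreducible, closed subset $Z := \overline{G\cdot(\overline{L\cdot y}+\fu)}\subseteq\g$. The key geometric input is that $Z$ equals the closure of a single $G$-orbit. This is the Lusztig--Spaltenstein induced-orbit theorem \cite{lusztig_induced_1979}, extended from nilpotent orbits to arbitrary orbits via Jordan decomposition: writing $y=y_s+y_n$, our hypotheses on $p$ ensure $M := Z_G(y_s)^\circ$ is a Levi of $G$ containing $L$ and $y_n$ is nilpotent in $Z_\fl(y_s)\subseteq\fm$; then the dense $G$-orbit in $Z$ is $G\cdot(y_s+n')$, where $n'$ realizes the $M$-nilpotent orbit induced from $L\cdot y_n$ via the parabolic $P\cap M$ of $M$. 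Granting this, and noting that $Z=\overline{G\cdot z}$ contains only finitely many $G$-orbits (since $G$-orbit closures preserve the semisimple part up to conjugacy, leaving only finitely many nilpotent-part orbits of the fixed centralizer), any irreducible $G$-equivariant perverse subquotient of $\IndPG\CG$ has irreducible $G$-stable closed support, necessarily equal to the closure of a single $G$-orbit, and is therefore orbital.

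For (ii), the support of $\ResGP\CF = \pi_!i^*\CF$ lies in $\pi(\fp\cap\overline{G\cdot x})$, where $\CF$ has support $\overline{G\cdot x}$. A parallel Jordan-decomposition argument, together with the classical finiteness of $P$-orbits on the intersection $\overline{G\cdot x}\cap\fp$, shows that this projection is a finite union of $L$-orbit closures in $\fl$. Each irreducible $L$-equivariant perverse constituent of $\ResGP\CF$, having irreducible $L$-stable closed support inside this finite union, must equal the $IC$-sheaf of a single $L$-orbit closure, hence is orbital. The main obstacle in both parts is the orbit-theoretic input: that $G\cdot(\overline{L\cdot y}+\fu)$ admits a dense open $G$-orbit (for induction) and that $\pi(\fp\cap\overline{G\cdot x})$ consists of finitely many $L$-orbit closures (for restriction). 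Both rely on the good behaviour of Jordan decomposition under our standing hypotheses on $p$, and reduce, via passing to the centralizer of the semisimple part, to the nilpotent case treated by Lusztig--Spaltenstein together with Richardson-type transversality.
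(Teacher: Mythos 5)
Your overall architecture matches the paper's: reduce the character-sheaf cases to the orbital cases via Lemma \ref{lem_Res_Ind_basics}.4, then show that the support of the induced (resp.\ restricted) sheaf meets only finitely many orbits, so that any irreducible equivariant constituent, having irreducible stable support, is automatically orbital. The induction half is essentially correct, though your detour through the Lusztig--Spaltenstein induced-orbit theorem is unnecessary: all you need is that ${}^G(\overline{L\cdot y}+\fu)$ consists of finitely many $G$-orbits, and this follows directly from Lemma \ref{lem_fin_conj_classes}.2 (the semisimple part of $z+u$, $z\in\fl$, $u\in\fu$, is $U$-conjugate to that of $z$), which bounds the semisimple parts by finitely many $G$-classes, together with the finiteness of nilpotent orbits in each centraliser. (Once one has finitely many orbits in an irreducible set, the dense orbit comes for free, so the heavier input buys nothing here.)

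The restriction half has a genuine gap: you invoke ``the classical finiteness of $P$-orbits on the intersection $\overline{G\cdot x}\cap\fp$'', and this is false in general. Already for $x$ regular nilpotent in $\mathfrak{gl}_n$ with $n\geq 6$ and $P=B$ a Borel, $\overline{G\cdot x}\cap\fb$ is the nilpotent radical of $\fb$, on which $B$ acts with infinitely many orbits. What is true --- and what you actually need --- is the finiteness of the set of \emph{$L$-orbits in the projection} $\pi(G(x)\cap\fp)$, which is Lemma \ref{lem_fin_conj_classes}.1 of the paper; it does not follow from any orbit-finiteness on the intersection itself. The paper proves it by reducing to the Borel/Cartan case and using Mirkovi\'c's adjoint-quotient argument to show that the semisimple parts of elements of $\pi(G(x)\cap\fp)$ form finitely many $L$-classes, after which finiteness of nilpotent orbits concludes. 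With that substitution (which is in the spirit of the ``parallel Jordan-decomposition argument'' you gesture at, but must replace the false $P$-orbit claim) your proof closes.
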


\begin{proof}
    As parabolic inductions and restrictions commute with Fourier transforms (Lemma \ref{lem_Res_Ind_basics}.4), it suffices to prove the statement for orbital sheaves.\\
    
    Let $\CF$ be orbital, we show the irreducible constituents of $\IndPG(\CF)$ are orbital. If $supp(\CF)=\overline{G(x)}$ for some $x\in\g$, then the support of $\ResGP\CF=\pi_!(\CF|_\fp)[2\mathrm{dim}(L)-2\mathrm{dim}(U)]$ (notations as in Definition \ref{def_para_ind_res}, $For^G_P$ omitted) is contained in $\pi(\overline{G(x)}\cap \fp)$. Note that the closure of a $G$-orbit is a union of finitely many $G$-orbits (c.f. \cite[proof of 8.4]{jantzen_nilpotent_2004}), and for each orbit $G(y)$, $\pi(G(y)\cap \fp)$ consists of finitely many $L$-orbits, by Lemma \ref{lem_fin_conj_classes}.1. So each irreducible constituent of $\ResGP(\CF)$ is orbital.\\

    Conversely, let $\CF$ on $\fl$ be an orbital sheaf of $L$ with $supp(\CF)=\overline{L(x)}$ for some $x\in L$. Then the support of $\IndPG\CF=\Gamma^G_Pi_*\pi^!(\CF)$ is contained in $\overline{^G(\pi^{-1}\overline{L(x)})}$. As $\overline{L(x)}$ is a union of finitely many $L$-orbits, by Lemma \ref{lem_fin_conj_classes}.2, there are only finitely many $G$-conjugacy classes of the semisimple parts of elements in $\pi^{-1}\overline{L(x)}$. It follows (by the finiteness of the number of nilpotent orbits) that there are only finitely many $G$-orbits in $^G(\pi^{-1}\overline{L(x)})$, hence in $\overline{^G(\pi^{-1}\overline{L(x)})}$. So each irreducible constituent of $\IndPG(\CF)$ is orbital.
\end{proof}

We used the following lemma in the proof of Lemma \ref{lem_res_ind_char_orb}.

\begin{lemma}\label{lem_fin_conj_classes}
    Fix $L\subseteq P\subseteq G$, with Levi decomposition $P=L\ltimes U$.\\ 
    1) Let $G(x)$ be a $G$-orbit in $\g$. Then $\pi(G(x)\cap \fp)$ consists of finitely many $L$-orbits, where $\pi: \fp=\fl\ltimes\fu\rightarrow\fl$ is the projection.\\
    2) For any $x\in \fl$ and $u\in \fu$, the semisimple part of $x+u$ is $U$-conjugate to the semisimple part of $x$.
\end{lemma}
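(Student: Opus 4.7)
The plan for part (2) is to use that $\pi\colon\fp=\fl\ltimes\fu\twoheadrightarrow\fl$ is a surjective morphism of algebraic Lie algebras, hence preserves the Jordan decomposition: applied to $y=x+u$, this gives $\pi(y_s)=x_s$ and $\pi(y_n)=x_n$, so in particular $y_s-x_s\in\fu$. Set $M:=Z_G(x_s)^\circ$ (a Levi of $G$ under our hypothesis on $p$), and put $P_M:=P\cap M=L_M\ltimes U_M$ with $L_M=Z_L(x_s)^\circ$ and $U_M=U\cap M$. Since $\mathrm{ad}(x_s)$ acts semisimply on $\fu$ with kernel $Lie(U_M)$, we have a decomposition $\fu=Lie(U_M)\oplus\fu^\perp$ where $\fu^\perp:=[x_s,\fu]$, and $\mathrm{ad}(x_s)$ is invertible on $\fu^\perp$. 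Since $x_n\in Z_\fl(x_s)=Lie(L_M)$, we may write $y=y_M+u^\perp$ with $y_M\in Lie(P_M)$ and $u^\perp\in\fu^\perp$.

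The main step is to construct $v\in U$ with $\mathrm{Ad}(v)(y)\in Lie(P_M)$ by iteration. Because $U$ acts trivially modulo $\fu$, every $U$-conjugate of $y$ continues to have $\pi$-image equal to $x$, so only the $\fu^\perp$-component need be killed. At each stage one sets $w:=\mathrm{ad}(x_s)^{-1}(u^\perp)\in\fu^\perp$ and replaces $y$ by $\mathrm{Ad}(\exp w)(y)$: the leading $\fu^\perp$-term cancels, while the higher Baker--Campbell--Hausdorff corrections push the residual $\fu^\perp$-component into a deeper layer of the lower central series of $\fu$. The iteration terminates because $\fu$ is nilpotent (and $p$ is large enough for the relevant exponentials to be defined). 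Once $\mathrm{Ad}(v)(y)\in Lie(P_M)\subseteq Lie(M)$, the semisimple part $\mathrm{Ad}(v)(y_s)$ lies in $(x_s+\fu)\cap Lie(M)=x_s+Lie(U_M)$; but $x_s$ is central in $Lie(M)$ and $Lie(U_M)$ consists of nilpotents commuting with $x_s$, so the uniqueness of Jordan decomposition forces $\mathrm{Ad}(v)(y_s)=x_s$, as required.

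Part (1) will then follow from part (2) together with standard finiteness statements. For any $y\in G(x)\cap\fp$, applying part (2) to the decomposition $y=\pi(y)+(y-\pi(y))$ shows $y_s$ is $U$-conjugate to $\pi(y)_s$, so $\pi(y)_s$ is $G$-conjugate to $x_s$; but $G(x_s)\cap\fl$ breaks into only finitely many $L$-orbits of semisimple elements (at most $|W_G/W_L|$ of them). For each such $L$-orbit with representative $z_s$, the nilpotent part $\pi(y)_n$ lies in $Z_\fl(z_s)=Lie(Z_L(z_s)^\circ)$, the Lie algebra of a reductive group, which has finitely many nilpotent orbits. Combining, $\pi(y)=\pi(y)_s+\pi(y)_n$ ranges over finitely many $L$-orbits. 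The main technical obstacle is the iteration in part (2): one needs an $\mathrm{ad}(x_s)$-stable refinement of the lower central series filtration on $\fu$ along which each step strictly reduces the $\fu^\perp$-component, which is routine but requires some bookkeeping.
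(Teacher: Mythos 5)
Your proposal is essentially correct but follows a genuinely different route from the paper's. For part (2) the paper simply says the statement is the Lie-algebra analogue of Lusztig's \cite[5.1]{lusztig_intersection_1984} and that the same argument applies; that argument is much shorter than yours: the semisimple element $y_s\in\fp$ lies in $\mathrm{Lie}(T')$ for a maximal torus $T'$ of $P$ (for $p$ large), every maximal torus of $P$ is $U$-conjugate to one contained in $L$, and since $U$ acts trivially on $\fp/\fu$ the resulting element of $\fl$ has image $x_s$ under $\pi$ and hence equals $x_s$. This bypasses your iteration entirely. For part (1) the paper does not deduce it from part (2): it reduces to the Borel case and quotes Mirkovi\'c \cite[3.3]{mirkovic_character_2004} for the finiteness of the set of $\fh$-components of $G(x)\cap\fb$, then concludes, as you do, from the finiteness of nilpotent orbits. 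Your derivation of (1) from (2) together with the fact that $G(x_s)\cap\fl$ is a finite union of $L$-orbits (rational conjugacy of semisimple elements, available here via the Chevalley restriction theorem under the paper's hypotheses on $p$) is a clean, self-contained alternative that avoids the external black box.

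Two soft spots in your part (2), both fixable. First, the termination of the iteration is not as routine as you suggest: with $w=\mathrm{ad}(x_s)^{-1}(u^\perp)$ the first-order correction contains $[w,x_n]$, which again lies in $\fu^\perp$ and at the \emph{same} depth of the lower central series (and the same height with respect to a cocharacter defining $P$) as $u^\perp$, so nothing strictly decreases under the filtration you propose. A correct bookkeeping is to work height by height along the grading of $\fu$ by a cocharacter $\Gm\to T$ with centraliser $L$, inverting $\mathrm{ad}(x_s+x_n)$ rather than $\mathrm{ad}(x_s)$ on $\fu^\perp\cap\fu(i)$ (this operator is invertible there since $\mathrm{ad}(x_n)$ is nilpotent and commutes with $\mathrm{ad}(x_s)$); alternatively, run the iteration on $y_s$ instead of $y$, which eliminates the troublesome $[w,x_n]$ term since $y_s-x_s\in\fu$. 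Second, using $\exp$ on $\fu$ requires $p$ to exceed the nilpotency class of $\fu$, which is not literally among the paper's stated hypotheses; one should either parametrise $U$ by root subgroups or use the maximal-torus argument, which avoids exponentials altogether. Your concluding step via uniqueness of the Jordan decomposition is correct.
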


\begin{proof}
    1) \cite[\nopp 3.3]{mirkovic_character_2004} proved this for the case $T\subseteq B\subseteq G$. The general case reduces to this case as follows: fix $T\subseteq B\subseteq G$. By conjugation, we may assume that our $L$ and $P$ are standard, and that we have the following diagram, where $\overline{\fb}$ is a Borel subalgebra of $\fl$:
    % https://q.uiver.app/#q=WzAsNSxbMSwwLCJcXG1hdGhmcmFre2J9Il0sWzEsMSwiXFxvdmVybGluZXtcXG1hdGhmcmFre2J9fSJdLFsyLDAsIlxcbWF0aGZyYWt7cH0iXSxbMiwxLCJcXG1hdGhmcmFre2x9Il0sWzAsMSwiXFxtYXRoZnJha3tofSJdLFswLDEsIlxcb3ZlcmxpbmV7XFxwaX0iLDAseyJzdHlsZSI6eyJoZWFkIjp7Im5hbWUiOiJlcGkifX19XSxbMCwyLCIiLDAseyJzdHlsZSI6eyJ0YWlsIjp7Im5hbWUiOiJob29rIiwic2lkZSI6InRvcCJ9fX1dLFsyLDMsIlxccGkiLDAseyJzdHlsZSI6eyJoZWFkIjp7Im5hbWUiOiJlcGkifX19XSxbMSwzLCIiLDIseyJzdHlsZSI6eyJ0YWlsIjp7Im5hbWUiOiJob29rIiwic2lkZSI6InRvcCJ9fX1dLFswLDMsIiIsMSx7InN0eWxlIjp7Im5hbWUiOiJjb3JuZXIifX1dLFsxLDQsIlxcb3ZlcmxpbmV7XFxhbHBoYX0iLDAseyJzdHlsZSI6eyJoZWFkIjp7Im5hbWUiOiJlcGkifX19XSxbMCw0LCJcXGFscGhhIiwyLHsic3R5bGUiOnsiaGVhZCI6eyJuYW1lIjoiZXBpIn19fV1d
\[\begin{tikzcd}
	& {\mathfrak{b}} & {\mathfrak{p}} \\
	{\mathfrak{h}} & {\overline{\mathfrak{b}}} & {\mathfrak{l}}
	\arrow["{\overline{\pi}}", two heads, from=1-2, to=2-2]
	\arrow[hook, from=1-2, to=1-3]
	\arrow["\pi", two heads, from=1-3, to=2-3]
	\arrow[hook, from=2-2, to=2-3]
	\arrow["\lrcorner"{anchor=center, pos=0.125}, draw=none, from=1-2, to=2-3]
	\arrow["{\overline{\alpha}}", two heads, from=2-2, to=2-1]
	\arrow["\alpha"', two heads, from=1-2, to=2-1]
\end{tikzcd}\]

Then, $(\pi(G(x)\cap\fp))\cap\overline{\fb}=\overline{\pi}(G(x)\cap\fb)$. By \cite[\nopp 3.3]{mirkovic_character_2004}, $\overline{\alpha}((\pi(G(x)\cap\fp))\cap\overline{\fb})=\alpha(G(x)\cap\fb)$ is a finite set. As $\pi(G(x)\cap\fp)$ is clearly $L$-stable, it is a union of finitely many $L$-orbits. This, plus $\overline{\alpha}((\pi(G(x)\cap\fp))\cap\overline{\fb})$ being finite, imply that the number of $L$-conjugacy classes of semisimple parts of elements of $\pi(G(x)\cap\fp)$ is finite, which in turn (by the finiteness of the number of nilpotent orbits) implies that there are only finitely many $L$-conjugacy classes in $\pi(G(x)\cap\fp)$.\\

    2) This is the analogue of \cite[\nopp 5.1]{lusztig_intersection_1984} in the Lie algebra case. The same argument, \textit{mutatis mutandis}, applies.
\end{proof}

In $G$, fix a maximal torus $T$ and a Borel $B$ with $B=T\ltimes U$, $U$ being the unipotent radical of $B$. Denote their Lie algebras by $\fh$, $\fb$, and $\fu$ respectively. The $G$-invariant non-degenerate symmetric bilinear form on $\g$ canonically identifies the dual of the diagram % https://q.uiver.app/#q=WzAsMyxbMCwwLCJcXG1hdGhmcmFre2h9Il0sWzEsMCwiXFxtYXRoZnJha3tifSJdLFsyLDAsIlxcbWF0aGZyYWt7Z30iXSxbMSwwLCJcXGFscGhhIiwyLHsic3R5bGUiOnsiaGVhZCI6eyJuYW1lIjoiZXBpIn19fV0sWzEsMiwiXFxiZXRhIiwwLHsic3R5bGUiOnsidGFpbCI6eyJuYW1lIjoiaG9vayIsInNpZGUiOiJ0b3AifX19XV0=
$\begin{tikzcd}
	{\mathfrak{h}} & {\mathfrak{b}} & {\mathfrak{g}}
	\arrow["\alpha"', two heads, from=1-2, to=1-1]
	\arrow["\beta", hook, from=1-2, to=1-3]
\end{tikzcd}$ with% https://q.uiver.app/#q=WzAsMyxbMCwwLCJcXG1hdGhmcmFre2h9Il0sWzEsMCwiXFxtYXRoZnJha3tnfS9cXG1hdGhmcmFre3V9Il0sWzIsMCwiXFxtYXRoZnJha3tnfSJdLFsyLDEsIlxcYmV0YSciLDIseyJzdHlsZSI6eyJoZWFkIjp7Im5hbWUiOiJlcGkifX19XSxbMCwxLCJcXGFscGhhJyIsMCx7InN0eWxlIjp7InRhaWwiOnsibmFtZSI6Imhvb2siLCJzaWRlIjoidG9wIn19fV1d
$\begin{tikzcd}
	{\mathfrak{h}} & {\mathfrak{g}/\mathfrak{u}} & {\mathfrak{g}}
	\arrow["{\beta'}"', two heads, from=1-3, to=1-2]
	\arrow["{\alpha'}", hook, from=1-1, to=1-2]
\end{tikzcd}$. Consider the following subset $\mathcal{C}(T,B)$ of $IrrPerv_G(\g)$: $\CF\in IrrPerv_G(\g)$ is in $\mathcal{C}(T,B)$ if it is non-zero and is an irreducible constituent of $\Gamma^G_B F_{\fb}(\CG)$, for some $\CG\in Perv_B(\fb)$ supported on a set of the form $\alpha^{-1}(S)$, $S$ being some finite set of closed points in $\fh$. 

\begin{proposition}\label{prop_char=Forb}
     $\mathcal{C}(T,B)$ coincides with the set of character sheaves.
\end{proposition}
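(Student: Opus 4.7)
The plan is to reduce the proposition to the following \emph{Key Lemma}: an irreducible $\CE\in Perv_G(\g)$ is orbital if and only if it is an irreducible constituent of $\mathrm{Ind}^G_B\CH$ for some $\CH\in Perv_T(\fh)$ with finite support. Granting the Key Lemma, the proposition follows by Fourier-transforming: Lemma \ref{lem_Res_Ind_basics}.4 gives $F_\g\circ\mathrm{Ind}^G_B=\mathrm{Ind}^G_B\circ F_\fh$, so $\CF$ is a character sheaf (i.e.\ $F_\g\CF$ is orbital) if and only if $\CF$ is an irreducible constituent of $F_\g(\mathrm{Ind}^G_B\CH)=\mathrm{Ind}^G_B(F_\fh\CH)$ for some $\CH$ with finite support. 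It then remains to identify this last set with $\mathcal{C}(T,B)$: I would first reduce to the case $\CG=\alpha^!\CH$ (any $\CG\in Perv_B(\fb)$ supported on $\alpha^{-1}(S)=S+\fu$ admits a finite filtration with subquotients of this form, extractable from the $U$-equivariant structure or via a $\Gm$-contraction of $\fu$ to $0$), and then apply Fourier push--pull compatibility for the linear maps $\alpha:\fb\twoheadrightarrow\fh$ and $\beta:\fb\hookrightarrow\g$, whose duals under the bilinear form are $\alpha':\fh\hookrightarrow\g/\fu$ and $\beta':\g\twoheadrightarrow\g/\fu$; this yields $F_\fb(\alpha^!\CH)\cong\alpha'_*(F_\fh\CH)$ and $F_\g(\beta_*X)\cong\beta'^{*}(F_\fb X)$ up to shifts, which combined with the commutativity of $F_\g$ with $\Gamma^G_B$ gives the identification $\Gamma^G_B F_\fb(\alpha^!\CH)\cong F_\g(\mathrm{Ind}^G_B\CH)$.

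For the Key Lemma, the ``if'' direction is immediate from Lemma \ref{lem_res_ind_char_orb}: finite-support sheaves on $\fh$ are orbital (as $T$-orbits in $\fh$ are points), and $\mathrm{Ind}^G_B$ preserves orbitality. For the ``only if'' direction, given orbital $\CE$ with support $\overline{G(y)}$, I would take $\CH:=\mathrm{Res}^G_B\CE\in Perv_T(\fh)$; by Lemma \ref{lem_fin_conj_classes}.1 it is supported on a finite subset of $\fh$, and the adjunction unit $\CE\to\mathrm{Ind}^G_B\mathrm{Res}^G_B\CE$ exhibits the irreducible $\CE$ as a constituent of the target as soon as this morphism is non-zero, equivalently whenever $\mathrm{Res}^G_B\CE\neq 0$.

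The main obstacle is precisely this non-vanishing. For $y$ regular semisimple, $\overline{G(y)}$ meets $\fh_r$ and Lemma \ref{lem_res=Res} immediately gives $(\mathrm{Res}^G_B\CE)|_{\fh_r}\cong\CE|_{\fh_r}[2\dim U]\neq 0$. In general, I would first arrange (by $G$-conjugation, and then by conjugation within $L:=Z_G(y_s)^\circ$, which fixes $y_s\in Z(\fl)$) that $y_s\in\fh$ and $y\in\fb$, and then analyze the stalk $(\mathrm{Res}^G_B\CE)_{y_s}=R\Gamma_c\bigl((y_s+\fu)\cap\overline{G(y)},\CE\bigr)$, whose domain contains $y$. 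Non-vanishing of this compactly supported cohomology should follow from the transverse-slice structure of the intersection together with a dimension/parity argument, or via the transitivity of parabolic restriction (Lemma \ref{lem_Res_Ind_basics}.3), $\mathrm{Res}^G_B=\mathrm{Res}^L_{B\cap L}\circ\mathrm{Res}^G_P$ for $P\supseteq B$ with Levi factor $L$, which reduces the problem to an analogous non-vanishing statement on $\fl$ where the center component $y_s$ can be handled by an AS-factor analysis and the nilpotent part by an inductive argument on $\dim G$.
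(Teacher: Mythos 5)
The paper's own ``proof'' is only a citation to \cite[\nopp 3.5]{mirkovic_character_2004}, so the real question is whether your reconstruction is sound --- and unfortunately your Key Lemma is false, in both places where you rely on it. The ``only if'' direction fails: an orbital sheaf need not be a constituent of $\mathrm{Ind}^G_B\CH$ for any finitely supported $\CH$ on $\fh$. Concretely, take $G$ semisimple and $\CE$ cuspidal; by Theorem \ref{thm_characterise_cusp} such an $\CE$ is orbital, but by Definition \ref{def_cusp} we have $\ResGP\CE=0$ for every proper $P$, in particular $\mathrm{Res}^G_B\CE=0$, so $Hom(\CE,\mathrm{Ind}^G_B\CH)=Hom(\mathrm{Res}^G_B\CE,\CH)=0$; since $\mathrm{Ind}^G_B\CH$ is semisimple (Lemma \ref{lem_Res_Ind_basics}.2, skyscrapers being of geometric origin), $\CE$ cannot be a constituent. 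More generally, by the generalised Springer correspondence only the orbital sheaves with ``principal'' cuspidal datum arise from $\mathrm{Ind}^G_B$ of skyscrapers. So the non-vanishing of $\mathrm{Res}^G_B\CE$, which you flag as ``the main obstacle'', is not an obstacle to be overcome --- it genuinely fails, and no amount of transverse-slice or parity analysis will rescue it. The same phenomenon sinks your reduction of a general $\CG\in Perv_B(\fb)$ supported on $\alpha^{-1}(S)$ to sheaves of the form $\alpha^!\CH$: a $B$-equivariant perverse sheaf on $s+\fu$ need not be constant along $\fu$ (the $U$-orbits in $s+\fu$ fill out the fibre only when $s$ is regular; for $s=0$ already the skyscraper $\delta_0$ on $\fb$ is a counterexample), so no such filtration exists. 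This extra generality in the definition of $\mathcal{C}(T,B)$ --- arbitrary $B$-equivariant sheaves on the fibres of $\alpha$, not pullbacks from $\fh$ --- is precisely what is needed to capture the cuspidal/non-Springer orbital sheaves, and discarding it is what breaks the argument.

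The correct route stays on $\fb$ rather than descending to $\fh$. For $\mathcal{C}(T,B)\subseteq\{\text{character sheaves}\}$: using the duality of the two diagrams and Fourier inversion, $F_\g\bigl(\Gamma^G_B F_\fb(\CG)\bigr)\cong\Gamma^G_B\beta_*(\CG^a)$ up to shift, which is supported on $\overline{{}^G(\alpha^{-1}(S))}$; by Lemma \ref{lem_fin_conj_classes}.2 the semisimple parts of elements of $s+\fu$ are all conjugate to $s$, so this is a finite union of $G$-orbits and every constituent is orbital. For the converse: given $F\CF=\CE$ orbital, one takes $\CG$ built from $\beta^*\CE=\CE|_{\fb}$ (whose support lies in some $\alpha^{-1}(S)$ with $S$ finite by Lemma \ref{lem_fin_conj_classes}.1) and uses the adjunction $Hom(\CE,\Gamma^G_B\beta_*\CG)=Hom(\beta^*\CE,\CG)$, which is non-zero for a suitable perverse (shifted) quotient $\CG$ of $\beta^*\CE$ --- note $\beta^*\CE\neq 0$ since $\overline{G(y)}$ always meets $\fb$. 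Your Fourier bookkeeping in the first paragraph is fine; it is the passage through $\fh$ that has to go.
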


\begin{proof}
    This is \cite[\nopp 3.5]{mirkovic_character_2004}, the same proof applies. 
\end{proof}

Admissible, character, and orbital sheaves are our primary objects of interest. The rest of this section is devoted to showing a key geometric property of character sheaves (Corollary \ref{cor_char_const_lusz_strat}). It is useful to introduce the following auxiliary class of sheaves:

\begin{definition}[quasi-admissible sheaves]\label{def_qadm}
    $\CF\in IrrPerv_G(\g)$ is called \underline{quasi-admissible} if it is non-zero, and for each $(L,\CO)$, each irreducible constituent of $\CF|_{Z_r(\fl)+\CO}$ is of the form $AS|_{Z_r(\fl)}\BX\CL'$, where $AS$ is as in Definition \ref{def_cusp} (with $G$, $\g$ replaced by $L$, $\fl$), and $\CL'$ is a local system in $Perv_L(\CO)$.
\end{definition}

\begin{lemma}\label{lem_qadm_const_lusz_strat}
    Quasi-admissible sheaves are constructible with respect to the Lusztig stratification.
\end{lemma}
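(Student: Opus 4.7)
The plan is to exploit $G$-equivariance of $\CF$ to reduce lisseness on each stratum $S_{(L,\CO)}$ to lisseness on the slice $Z_r(\fl)+\CO$, where the quasi-admissibility hypothesis applies directly.

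First, fix a pair $(L,\CO)$ and consider the action map
\[
a: G\times (Z_r(\fl)+\CO)\longrightarrow S_{(L,\CO)},\qquad (g,y)\mapsto g\cdot y.
\]
By definition $a$ is surjective onto $S_{(L,\CO)}$, and it is smooth: as noted in the proof of Proposition \ref{prop_lusztigstrat}.3, the map $\tilde{S}_{(L,\CO)}=G\times^L(Z_r(\fl)+\CO)\to S_{(L,\CO)}$ is a finite Galois (hence \etale) covering, while $G\times (Z_r(\fl)+\CO)\to \tilde{S}_{(L,\CO)}$ is an $L$-torsor. By $G$-equivariance of $\CF$, there is an isomorphism (up to shift)
\[
a^*(\CF|_{S_{(L,\CO)}})\;\cong\; \underline{\QQbarl}_G\BX (\CF|_{Z_r(\fl)+\CO}).
\]

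Second, I would analyse $\CF|_{Z_r(\fl)+\CO}$ using quasi-admissibility. By hypothesis, every irreducible constituent is of the form $AS|_{Z_r(\fl)}\BX \CL'$, which is a (shifted) local system on $Z_r(\fl)+\CO$: $AS|_{Z_r(\fl)}$ is an Artin–Schreier local system and $\CL'$ is a local system on $\CO$ by assumption, and the external product of lisse sheaves is lisse. Since perverse sheaves whose underlying complex is a shifted local system form a full subcategory of $Perv(Z_r(\fl)+\CO)$ closed under extensions (local systems on a smooth variety are closed under kernels, cokernels and extensions inside constructible sheaves), each $^p\CH^i(\CF|_{Z_r(\fl)+\CO})$ is itself a perverse local system. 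Consequently the ordinary cohomology sheaves of $\CF|_{Z_r(\fl)+\CO}$ are lisse.

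Finally, I would conclude by smooth descent. The right-hand side of the displayed isomorphism has lisse cohomology, hence so does $a^*(\CF|_{S_{(L,\CO)}})$; since $a$ is smooth surjective and lisseness can be tested after pulling back along such a map, $\CF|_{S_{(L,\CO)}}$ itself has lisse cohomology. Applying this for every pair $(L,\CO)$ shows that $\CF$ is constructible with respect to $\g=\bigsqcup S_{(L,\CO)}$. The only mildly non-formal ingredient is the closure of (perverse) local systems under extensions, which is standard; the rest is just packaging the $G$-equivariant descent, so I do not anticipate a serious obstacle.
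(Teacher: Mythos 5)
Your proof is correct, and the first half (quasi-admissibility forces $\CF|_{Z_r(\fl)+\CO}$ to have lisse cohomology, since shifted local systems are closed under extensions) is exactly what the paper does. The second half, however, takes a genuinely different route. The paper descends from the slice $Z_r(\fl)+\CO$ to the stratum $S_{(L,\CO)}$ via a general-purpose statement (Lemma \ref{lem_loc_sys_sat_equi}): an $H$-equivariant sheaf that is a local system on a locally closed \emph{irreducible} subvariety whose saturation is $X$ is a local system on all of $X$. That lemma is proved by a stratification-and-specialisation argument and, crucially, makes no smoothness or flatness assumption on the orbit map; the remark following it shows it is genuinely delicate (it fails without irreducibility of the slice). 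You instead use the specific geometry of the Lusztig stratum: the action map $G\times(Z_r(\fl)+\CO)\to S_{(L,\CO)}$ factors as an $L$-torsor followed by the finite Galois cover from the proof of Proposition \ref{prop_lusztigstrat}.3, hence is smooth surjective, and lisseness is smooth-local on the base. Your argument is more elementary and self-contained here, but it leans on the smoothness of the saturation map, which is a special feature of this situation (established in Proposition \ref{prop_lusztigstrat}.3); the paper's lemma buys a statement that would survive in settings where the orbit map is merely surjective. Both are complete proofs of the lemma as stated.
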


\begin{proof}
    Let $(L,\CO)$ be a pair, $S_{(L,\CO)}=$ $^G(Z_r(\fl)+\CO)$ be the associated Lusztig stratum. By definition, for a quasi-admissible sheaf $\CF$, each irreducible constituent of $\CF|_{Z_r(\fl)+\CO}$ is of the form $AS|_{Z_r(\fl)}\BX\CL'$ (notations as in Definition \ref{def_qadm}). In particular, $\CF|_{Z_r(\fl)+\CO}$ is a local system. Since $S_{(L,\CO)}$ is the saturation of $Z_r(\fl)+\CO$ and $\CF|_{S_{(L,\CO)}}$ is $G$-equivariant, $\CF|_{S_{(L,\CO)}}$ is also a local system by Lemma \ref{lem_loc_sys_sat_equi}.
\end{proof}

\begin{lemma}\label{lem_loc_sys_sat_equi}
    Let $H$ be an algebraic group acting on an irreducible variety $X$, $Z$ be a locally closed irreducible subvariety of $X$. If  $\CF\in D_H(X)$ is a local system on $Z$, then it is a local system on $X$.
\end{lemma}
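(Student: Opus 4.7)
My plan is to propagate the local system property from $Z$ to $X$ using $H$-equivariance together with smooth descent. For this, I read the lemma under the implicit (and necessary) assumption $X = H \cdot Z$, which holds in the application ($X = S_{(L,\CO)}$, $Z = Z_r(\fl)+\CO$, $H = G$).

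First, I would exploit the defining equivariance isomorphism $a_X^* \CF \cong p_X^* \CF$ on $H \times X$, where $a_X$ denotes the action and $p_X$ the projection. Restricting this isomorphism along the locally closed immersion $H \times Z \hookrightarrow H \times X$ yields a canonical identification
\[
a^* \CF \;\cong\; p_2^*(\CF|_Z),
\]
where $a \colon H \times Z \to X$, $(h, z) \mapsto h \cdot z$ is the action map and $p_2 \colon H \times Z \to Z$ is the projection. Since $\CF|_Z$ is a local system by hypothesis, its projection pullback is a local system, and hence $a^* \CF$ is a local system on $H \times Z$.

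Next, I would invoke smooth descent for \etale local systems: if $f \colon Y \to X$ is smooth surjective and $f^* \CG$ is a local system, then so is $\CG$. Applied to $a$, this would immediately yield the conclusion, provided $a$ is smooth and surjective. Surjectivity is precisely the assumption $X = H \cdot Z$. For smoothness, I would factor $a$ as the composite of the $L$-torsor $H \times Z \to H \times^L Z$, where $L \subseteq H$ is the subgroup stabilising $Z$ set-wise, with the map $H \times^L Z \to X$; the first factor is smooth, and the second is \etale in the geometric setting of interest (in the Lusztig-stratum application it is exactly the Galois cover $\tilde{S}_{(L,\CO)} \to S_{(L,\CO)}$ with group $W_{(L,\CO)}$ constructed in Proposition~\ref{prop_lusztigstrat}).

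The main technical obstacle is verifying that $a$ is smooth and surjective in whatever generality one needs; once this is established from the specific geometry at hand (and in the intended application it follows directly from the discussion in \S\ref{sec_lusztigstrat}), the rest of the argument is a formal application of smooth descent.
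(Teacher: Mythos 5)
Your first step is sound: restricting the equivariance isomorphism $a_X^*\CF\cong p_X^*\CF$ to $H\times Z$ does yield $a^*\CF\cong p_2^*(\CF|_Z)$, and lisse-ness does descend along a smooth surjective morphism. The gap is exactly where you flag it, and it is not a removable technicality: smoothness of the action map $a\colon H\times Z\to X$ neither appears among the hypotheses nor follows from them (even after adding the implicit assumption $X=H\cdot Z$, which the paper's own proof also uses when it says every stratum meets $Z$). The lemma does not assume $Z$ is smooth, and in characteristic $p$ even orbit maps of smooth $Z$ can fail to be smooth: for $H=\Gm$ acting on $X=\Gm$ by $t\cdot x=t^px$ and $Z=\{1\}$, the map $a$ is $t\mapsto t^p$, which is surjective and flat but not smooth, yet the lemma still holds. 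Likewise your factorisation through $H\times^{L}Z$ does not help in general: there is no a priori reason for $H\times^{L}Z\to X$ to be \'etale. In the one application you cite, that \'etaleness is precisely the content of the tangent-space computation in Proposition \ref{prop_lusztigstrat}.3, so your argument proves the instance of the lemma that is actually used, but only by importing that computation; it does not prove the general statement.

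The paper's proof takes a genuinely different, purely sheaf-theoretic route that needs no smoothness of orbit maps. It forms the coarsest $H$-stable stratification $X=X_1\sqcup X_2\sqcup\cdots$ by maximal open loci of lisse-ness; since $X=H\cdot Z$ and each $X_i$ is $H$-stable, each $X_i$ meets $Z$, and irreducibility of $Z$ puts its geometric generic point in the dense stratum $X_1$. A failure of lisse-ness across $X_1\sqcup X_2$ gives a non-invertible cospecialisation $\CF_y\to\CF_x$ with, after translating by $H$, $y\in X_2\cap Z$ and $x\in X_1$; comparing with the cospecialisations through $\overline{\eta}_Z$ and $\overline{\eta}_{X_1}$, which are invertible because $\CF$ is lisse on $Z$ and on $X_1$, forces $\CF_y\to\CF_x$ to be invertible, a contradiction. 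To salvage your approach you would have to either add ``$a\colon H\times Z\to X$ is smooth and surjective'' as a hypothesis (and verify it separately in each application), or replace smooth descent with an argument along these lines.
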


\begin{proof}
    Let $X_1$ be the maximal open subset of $X$ on which $\CF$ is a local system. It is necessarily $H$-stable. Let $X_2$ be the maximal open subset of $X-X_1$ on which $\CF$ is a local system. It is also $H$-stable. Iterate this process, we get a $G$-stable stratification $X=X_1\sqcup X_2\sqcup...$ with respect to which $\CF$ is constructible, and each stratum intersects $Z$ (since $X$ is the saturation of $Z$). Assume $X_2$ is non-empty, we derive a contradiction as follows.\\

    Consider $X'=X_1\sqcup X_2$. As $\CF$ is not a local system on $X'$, there exist a specialisation of geometric points $x\leadsto y$ such that $\CF_x\leftarrow \CF_y$ is not an isomorphism (c.f. \cites[\nopp 0GJ2]{Stacks}[\nopp 4.4]{hansen_relative_2023}). By $G$-equivariance, we may assume $y$ is a geometric point of $X_2$. Then $x$ must be a geometric point of $X_1$ (as $\CF$ is locally constant on $X_2$). Consider the following specialisation diagrams: 
% https://q.uiver.app/#q=WzAsOCxbMCwwLCJcXG92ZXJsaW5le1xcZXRhfV97WF8xfSJdLFswLDEsIlxcb3ZlcmxpbmV7XFxldGF9X1oiXSxbMiwxLCJ5Il0sWzEsMCwieCJdLFs0LDAsIlxcbWF0aGNhbHtGfV97XFxvdmVybGluZXtcXGV0YX1fe1hfMX19Il0sWzQsMSwiXFxtYXRoY2Fse0Z9X3tcXG92ZXJsaW5le1xcZXRhfV97Wn19Il0sWzYsMSwiXFxtYXRoY2Fse0Z9X3kiXSxbNSwwLCJcXG1hdGhjYWx7Rn1feCJdLFsxLDIsIiIsMCx7InN0eWxlIjp7ImJvZHkiOnsibmFtZSI6InNxdWlnZ2x5In19fV0sWzAsMSwiIiwwLHsic3R5bGUiOnsiYm9keSI6eyJuYW1lIjoic3F1aWdnbHkifX19XSxbMCwzLCIiLDIseyJzdHlsZSI6eyJib2R5Ijp7Im5hbWUiOiJzcXVpZ2dseSJ9fX1dLFszLDIsIiIsMix7InN0eWxlIjp7ImJvZHkiOnsibmFtZSI6InNxdWlnZ2x5In19fV0sWzUsNCwiYSJdLFs3LDQsImMiLDJdLFs2LDcsImQiLDJdLFs2LDUsImIiXV0=
\[\begin{tikzcd}
	{\overline{\eta}_{X_1}} & x &&& {\mathcal{F}_{\overline{\eta}_{X_1}}} & {\mathcal{F}_x} \\
	{\overline{\eta}_Z} && y && {\mathcal{F}_{\overline{\eta}_{Z}}} && {\mathcal{F}_y}
	\arrow[squiggly, from=2-1, to=2-3]
	\arrow[squiggly, from=1-1, to=2-1]
	\arrow[squiggly, from=1-1, to=1-2]
	\arrow[squiggly, from=1-2, to=2-3]
	\arrow["a", from=2-5, to=1-5]
	\arrow["c"', from=1-6, to=1-5]
	\arrow["d"', from=2-7, to=1-6]
	\arrow["b", from=2-7, to=2-5]
\end{tikzcd}\]
where $\overline{\eta}_{X_1}$ (resp. $\overline{\eta}_Z$) is a geometric generic point of $X_1$ (resp. $Z$). We may assume the diagrams commute. It is clear that $a$, $b$, $c$ are isomorphisms. So $d$ is also an isomorphism, which is a contradiction.
\end{proof}

\begin{remark}
    The lemma is false if without the assumption that $Z$ is irreducible. For example, consider the additive group $H=\AAA^1$ acting on $X=\AAA^2=\mathrm{Spec}(k[x_1,x_2])$ via translation in the $x_2$-variable, and take $Z=(\{x_2=0\}-\{(0,0)\})\sqcup \{(0,1)\}$.
\end{remark}

\begin{lemma}\label{lem_char_are_qadm} 
    Character sheaves are quasi-admissible.
\end{lemma}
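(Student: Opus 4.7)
The plan is to use parabolic restriction to reduce to the case of a character sheaf on the Levi $\fl$ itself, and then to exploit the decomposition $\fl=Z(\fl)\oplus[\fl,\fl]$ together with the definition of orbital sheaves.

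For any pair $(L,\CO)$ and any parabolic $P\supseteq L$ with unipotent radical $U$, Lemma \ref{lem_res=Res} (applied to $Z_r(\fl)+\CO\subseteq\fl_r$) gives $\CF|_{Z_r(\fl)+\CO}\cong(\ResGP\CF)|_{Z_r(\fl)+\CO}[-2\mathrm{dim}(U)]$. By Lemma \ref{lem_Res_Ind_basics}.1 and Lemma \ref{lem_res_ind_char_orb}, $\ResGP\CF$ is a perverse sheaf on $\fl$ whose composition factors are character sheaves on $\fl$. Restricting the composition series of $\ResGP\CF$ to $Z_r(\fl)+\CO$ and passing to the long exact sequences of perverse cohomology, every irreducible constituent of $(\ResGP\CF)|_{Z_r(\fl)+\CO}$ is an irreducible subquotient of some $^p\CH^j(\CG|_{Z_r(\fl)+\CO})$ where $\CG$ is a character sheaf on $\fl$. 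So it suffices to verify the quasi-admissibility condition for character sheaves on $\fl$.

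Let $\CG$ be such a character sheaf on $\fl$. Since $L$ acts trivially on $Z(\fl)$, every $L$-orbit in $\fl=Z(\fl)\oplus[\fl,\fl]$ has the form $\{z\}+L(y)$ with $z\in Z(\fl)$ and $y\in[\fl,\fl]$, so the orbital sheaf $F_\fl(\CG)$ is supported on a single closed slice $\{z\}\times[\fl,\fl]$. The inclusion $\{z\}\times[\fl,\fl]\hookrightarrow\fl$ factors as $i\times\mathrm{id}$ with $i\colon\{z\}\hookrightarrow Z(\fl)$, and pushforward along this embedding identifies naturally with $\delta_z\BX(-)$; hence $F_\fl(\CG)=\delta_z\BX\CH_2$ for a unique irreducible $L$-equivariant orbital perverse sheaf $\CH_2$ on $[\fl,\fl]$. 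Applying the inverse Fourier transform and using its compatibility with the decomposition $\fl=Z(\fl)\oplus[\fl,\fl]$ and with external products yields $\CG=AS_z\BX\CG_2$, where $AS_z=F^{-1}_{Z(\fl)}(\delta_z)$ is the Artin--Schreier local system of Definition \ref{def_cusp} and $\CG_2=F^{-1}_{[\fl,\fl]}(\CH_2)$ is an irreducible $L$-equivariant perverse sheaf on $[\fl,\fl]$.

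Since $\CO\subseteq\CN_L\subseteq[\fl,\fl]$, the set $Z_r(\fl)+\CO$ is the product $Z_r(\fl)\times\CO$, and $\CG|_{Z_r(\fl)+\CO}=AS_z|_{Z_r(\fl)}\BX\CG_2|_\CO$. The first factor is a rank-one local system, and $\CG_2|_\CO$ has lisse cohomology sheaves because $\CG_2$ is $L$-equivariant and $\CO$ is a single $L$-orbit. On the smooth variety $\CO$, this forces each perverse cohomology of $\CG_2|_\CO$ to be a lisse sheaf placed in a single perverse degree, whose irreducible subquotients are irreducible local systems $\CL'$ on $\CO$. Taking external products gives exactly the form $AS_z|_{Z_r(\fl)}\BX\CL'$ required by Definition \ref{def_qadm}. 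The only delicate step is the decomposition $F_\fl(\CG)=\delta_z\BX\CH_2$, which rests on the geometric observation that $L$-orbits in $\fl$ do not meet more than one slice $\{z\}\times[\fl,\fl]$; everything else is bookkeeping with long exact sequences and external products.
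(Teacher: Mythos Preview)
Your proof is correct and follows essentially the same approach as the paper's: reduce via Lemma~\ref{lem_res=Res} to irreducible constituents of $\ResGP\CF$, observe these are character sheaves on $\fl$, decompose them as $AS\BX\CF'$ on $Z(\fl)\times[\fl,\fl]$, and then restrict to $Z_r(\fl)\times\CO$. The only difference is that the paper asserts the decomposition $\CG\cong AS\BX\CF'$ for a character sheaf $\CG$ on $\fl$ with a bare ``hence'', whereas you spell out the underlying reason (the orbital sheaf $F_\fl(\CG)$ has support in a single slice $\{z\}\times[\fl,\fl]$ because $L$ acts trivially on $Z(\fl)$); this is a welcome clarification rather than a different argument.
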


\begin{proof}
    Let $\CF$ be a character sheaf, $(L,\CO)$ be a pair, $P$ be any parabolic containing $L$ as a Levi factor. By Lemma \ref{lem_res=Res}, $\CF|_{Z_r(\fl)+\CO}[2\mathrm{dim}(U)]\cong (\ResGP\CF)|_{Z_r(\fl)+\CO}$. By Lemma \ref{lem_res_ind_char_orb}.2, each irreducible constituent of $\ResGP\CF$ is a character sheaf, hence of the form $AS\BX\CF'$, where $AS$ is the Fourier transform of a skyscraper sheaf $\underline{\QQbarl}_x$ for some closed point $x$ of $Z(\fl)^{\vee}$, and $\CF'$ is some character sheaf on $[\fl,\fl]$. So each irreducible constituent of $(\ResGP\CF)|_{Z_r(\fl)+\CO}$ is the irreducible constituent of some $(AS|_{Z_r(\fl)})\BX(\CF'|_\CO)$. We claim that it must be of the form $(AS|_{Z_r(\fl)})\BX\CL'$ for some local system $\CL'$ on $\CO$. Indeed: $\CF'|_\CO$ is a local system by $L$-equivariance, denote its irreducible constituents by $\{\CL'_i\}$, then, as $(AS|_{Z_r(\fl)})$ is irreducible of rank 1, each $(AS|_{Z_r(\fl)})\BX\CL'_i$ is irreducible, and $\{(AS|_{Z_r(\fl)})\BX\CL'_i\}$ are exactly the irreducible constituents of $(AS|_{Z_r(\fl)})\BX(\CF'|_\CO)$.
\end{proof}

Lemma \ref{lem_qadm_const_lusz_strat} and Lemma \ref{lem_char_are_qadm} imply:

\begin{corollary}\label{cor_char_const_lusz_strat}
    Character sheaves are constructible with respect to the Lusztig stratification.
\end{corollary}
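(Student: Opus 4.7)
The plan is to obtain the corollary as an immediate consequence of the two lemmas that precede it: Lemma \ref{lem_char_are_qadm}, which asserts that any character sheaf is quasi-admissible, and Lemma \ref{lem_qadm_const_lusz_strat}, which asserts that any quasi-admissible sheaf is constructible with respect to the Lusztig stratification $\g=\bigsqcup S_{(L,\CO)}$. Composing these two implications yields exactly the statement of Corollary \ref{cor_char_const_lusz_strat}.

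In more detail, given a character sheaf $\CF$, the first step is to invoke Lemma \ref{lem_char_are_qadm} to conclude that for each pair $(L,\CO)$, each irreducible constituent of $\CF|_{Z_r(\fl)+\CO}$ has the external-product form $AS|_{Z_r(\fl)}\BX\CL'$ with $\CL'$ a local system on $\CO$. In particular $\CF|_{Z_r(\fl)+\CO}$ is itself a local system on $Z_r(\fl)+\CO$. The second step is then to pass from this local constancy on the ``slice'' $Z_r(\fl)+\CO$ to local constancy on the entire saturated stratum $S_{(L,\CO)}=$ $^G(Z_r(\fl)+\CO)$. This is exactly where Lemma \ref{lem_qadm_const_lusz_strat} applies: since $S_{(L,\CO)}$ is irreducible (Proposition \ref{prop_lusztigstrat}.3) and $\CF$ is $G$-equivariant, Lemma \ref{lem_loc_sys_sat_equi} upgrades the local constancy on $Z_r(\fl)+\CO$ to local constancy on all of $S_{(L,\CO)}$. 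Doing this for every pair $(L,\CO)$, and using that the strata cover $\g$ (Proposition \ref{prop_lusztigstrat}.2) and are locally closed (Proposition \ref{prop_lusztigstrat}.3), gives the desired constructibility.

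There is essentially no obstacle here: all the hard work has already been front-loaded into Lemma \ref{lem_res=Res} (which identified $\CF|_{\fl_r}$ with a shift of $(\ResGP\CF)|_{\fl_r}$), Lemma \ref{lem_res_ind_char_orb} (which ensured that irreducible constituents of $\ResGP\CF$ remain character sheaves, so are of product form $AS\BX\CF'$ on $\fl=Z(\fl)\times[\fl,\fl]$), and Lemma \ref{lem_loc_sys_sat_equi} (the specialisation argument that propagates local-system-ness across a saturation). The corollary is just the synthesis of these inputs, so the proof should be a one-line citation of the two preceding lemmas.
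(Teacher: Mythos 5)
Your proposal is correct and matches the paper exactly: the corollary is stated in the paper as an immediate consequence of Lemma \ref{lem_char_are_qadm} (character sheaves are quasi-admissible) combined with Lemma \ref{lem_qadm_const_lusz_strat} (quasi-admissible sheaves are constructible with respect to the Lusztig stratification). Your elaboration of how those lemmas work internally also tracks the paper's own arguments.
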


\section{Characterisations of cuspidal sheaves, equivalence of being a character sheaf and being admissible}\label{sec_characterise_cusp}
The setup is as in the Conventions. In this section, we give several characterisations of cuspidal sheaves, and show the equivalence of being a character sheaf and being admissible.

\begin{theorem}\label{thm_characterise_cusp}
    Assume $G$ is semisimple. Then, for a non-zero $\CF\in IrrPerv_G(\g)$, the following are equivalent:\\
    1) $\CF$ is cuspidal;\\
    2) $\CF$ and $F\CF$ are supported on $\CN_G$;\\
    3) $\CF$ is a character sheaf and is orbital;\\
    4) $\CF$ is supported on $\CN_G$ and has nilpotent singular support (i.e. $SS(\CF)\subseteq \g\X\CN_G$).
\end{theorem}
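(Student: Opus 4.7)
The plan is to establish the equivalences via the cycle $(1) \Rightarrow (2) \Leftrightarrow (4)$, $(2) \Leftrightarrow (3)$, $(2) \Rightarrow (1)$. The Fourier–characteristic cycle compatibility from \cite{zhou_fourier_2024} enters only in $(2) \Leftrightarrow (4)$; it applies because $G$-equivariant sheaves supported on the cone $\CN_G$ are automatically monodromic for $\Gm$-scaling (via Jacobson–Morozov, which embeds each $\Gm$-line through a nilpotent into its $G$-orbit). For $(1) \Rightarrow (2)$, cuspidality together with Lemma \ref{lem_res=Res} yields $\CF|_{\fl_r} = 0$ for every proper Levi $L$; any $x \in \g$ with $x_s \neq 0$ lies in $\fl_r$ for the proper Levi $L = Z_G(x_s)^\circ$ (proper because $G$ is semisimple), so $supp(\CF) \subseteq \CN_G$. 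Since parabolic restriction commutes with the Fourier transform (Lemma \ref{lem_Res_Ind_basics}.4), the same argument applied to $F\CF$ gives $supp(F\CF) \subseteq \CN_G$. For $(2) \Leftrightarrow (4)$, the compatibility says $SS(F\CF)$ is the coordinate-swap of $SS(\CF)$ in $T^*\g \cong \g \times \g$, so $supp(F\CF)$ equals the closure of the fiber projection of $SS(\CF)$; hence ``$F\CF$ supported on $\CN_G$'' coincides with ``$SS(\CF) \subseteq \g \times \CN_G$''.

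For $(2) \Rightarrow (3)$, irreducibility and $G$-equivariance force each of $supp(\CF)$, $supp(F\CF) \subseteq \CN_G$ (a finite union of orbits) to be the closure of a single $G$-orbit; thus $\CF$ is orbital and a character sheaf. For the converse $(3) \Rightarrow (2)$, I would use that character sheaves are constructible with respect to the Lusztig stratification (Corollary \ref{cor_char_const_lusz_strat}). Writing $supp(\CF) = \overline{G(x)}$ with associated pair $(L_x, \CO_x)$ where $L_x = Z_G(x_s)^\circ$, the restriction $\CF|_{S_{(L_x, \CO_x)}}$ is lisse on the irreducible smooth stratum and nonzero on $G(x)$, hence nowhere-zero on $S_{(L_x, \CO_x)}$, so $S_{(L_x, \CO_x)} \subseteq \overline{G(x)}$. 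But $\dim S_{(L_x, \CO_x)} = \dim G(x) + \dim Z(\fl_x)$ by Proposition \ref{prop_lusztigstrat}.3, forcing $\dim Z(\fl_x) = 0$; as every proper Levi of a semisimple group has nonzero central dimension, this gives $L_x = G$, so $x$ is nilpotent. Applying the same argument to $F\CF$ (also orbital and a character sheaf, by Fourier inversion) gives $supp(F\CF) \subseteq \CN_G$.

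I expect $(2) \Rightarrow (1)$ to be the main obstacle, as one must upgrade the vanishing of $\CF$ off $\CN_G$ to the vanishing of $\ResGP \CF$ in its entirety (not merely on $\fl_r$). My plan is to use Lemma \ref{lem_fin_conj_classes}.2 to observe that any $z + u \in \fp$ is nilpotent only when $z \in \CN_L$, so $\ResGP\CF$ is supported on $\CN_L \subseteq [\fl, \fl]$ and takes the form $(i_0)_* \CH$ along the inclusion $i_0: [\fl,\fl] \hookrightarrow \fl = Z(\fl) \oplus [\fl,\fl]$. Under the Fourier transform on $\fl$ (using the bilinear form to identify $\fl^* \cong \fl$), this becomes $p^* F_{[\fl,\fl]}\CH$, which is constant along $Z(\fl)$. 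By Lemma \ref{lem_Res_Ind_basics}.4, $F \ResGP \CF = \ResGP F\CF$ is again supported on $\CN_L \subseteq [\fl,\fl]$ by applying the same argument to the hypothesis on $F\CF$; since $Z(\fl) \neq 0$ for any proper Levi of semisimple $G$, a sheaf constant along $Z(\fl)$ cannot be supported inside $\{0\} \times \CN_L$ unless it vanishes. Hence $\CH = 0$ and $\ResGP \CF = 0$. Condition 2 of Definition \ref{def_cusp} is vacuous because $Z(\g) = 0$, so $\CF$ is cuspidal. The subtle point is that both halves of the hypothesis (2) enter essentially: the support of $\CF$ alone only places $\ResGP\CF$ in $\CN_L$, and it is the Fourier-transformed version of the same statement that completes the vanishing.
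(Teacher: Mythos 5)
Your proposal is correct and follows essentially the same route as the paper: (1)$\Rightarrow$(2) via Lemma \ref{lem_res=Res} and the regular loci $\fl_r$, (2)$\Rightarrow$(1) via Lemma \ref{lem_fin_conj_classes}.2 and the Fourier-transform/central-direction argument, (2)$\Leftrightarrow$(3) via finiteness of nilpotent orbits and Corollary \ref{cor_char_const_lusz_strat}, and (2)$\Leftrightarrow$(4) via $\Gm$-equivariance on the nilpotent cone and the characteristic-cycle/Fourier compatibility of \cite{zhou_fourier_2024}. The only (welcome) difference is that you make explicit two points the paper leaves implicit, namely the treatment of $F\CF$ in (1)$\Rightarrow$(2) and the swap of cotangent coordinates in (2)$\Leftrightarrow$(4).
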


\begin{proof}
    1) $\Rightarrow$ 2). By Lemma \ref{lem_res=Res} and the definition of cuspidal sheaves, $\CF|_{Z_r(\fl)+\CO}=0$ for all pairs $(L,\CO)$ except if $L=G$. For any $x\in\g$ with $\CF_x\neq 0$, we have $x\in Z_r(\fl)+\CO$, for $L=Z_\g(x_s)^\circ$ and $\CO$ some nilpotent orbit of $L$ (see the proof of Proposition \ref{prop_lusztigstrat}.2). So $Z_\g(x_s)^\circ=G$, $x\in Z_r(\g)+\CO$. By the semisimplicity of $G$, $Z(\g)=0$, so $x\in\CN_G$. So $supp(\CF)\subseteq\CN_G$.\\
    
    2) $\Rightarrow$ 1). Let $L\subseteq P\subsetneqq G$, $P$ containing $L$ as a Levi factor, we want to show $\ResGP\CF=0$. First note $supp(\ResGP\CF)\subseteq \CN_L$. Indeed, recall $\ResGP\CF=\pi_!(\CF|_\fp)[2\mathrm{dim}(U)]$ (notations as in Definition \ref{def_para_ind_res}), if $x\in \fl$ is such that $(\ResGP\CF)_x\neq 0$, then there exists $u\in\fu$ such that $\CF|_{x+u}\neq 0$. So $x+u\in \CN_G$. Since the semisimple part of $x+u$ (which is 0) is $U$-conjugate to the semisimple part of $x$ (Lemma \ref{lem_fin_conj_classes}.2), $x\in \CN_L$. This shows $supp(\ResGP\CF)\subseteq \CN_L$.\\

    For the same reason, $supp(\ResGP F\CF)\subseteq \CN_L$. Since $Z(\fl)$ is non-trivial (as $P\neq G$) and $\CN_L\subseteq [\fl,\fl]$, $supp(\ResGP\CF)$ and $supp(\ResGP F\CF)$ cannot both be in $\CN_L$ unless both $\ResGP\CF$ and $\ResGP F\CF$ are 0.\\
    
    2) $\Rightarrow$ 3). $\CF$ is orbital because it is irreducible and supported on finitely many $G$-orbits, so there exists a single orbit whose closure is $supp(\CF)$. Similarly, $F\CF$ is orbital. So $\CF$ is a character sheaf.\\
    
    3) $\Rightarrow$ 2). As the statement is symmetric with respect to $F\CF$, it suffices to show $supp(\CF)\subseteq \CN_G$. By Corollary \ref{cor_char_const_lusz_strat}, $\CF$ is of the form $j_{!*}\CL$, for some non-zero local system $\CL$ on some Lusztig stratum $S_{(L,\CO)}$. Note $Z_r(\fl)+\CO\subseteq S_{(L,\CO)}$ and all $G$-orbits in $S_{(L,\CO)}$ are of the same dimension, $\CF$ being orbital then implies that $Z(\fl)$ must be trivial, i.e, $L=G$. So $supp(\CF)=\overline{^G\CO}\subseteq \CN_G$. \\
    
    2) $\Leftrightarrow$ 4).\footnote{In a previous version of this article, for this proof, we used \cite[Lemma 8.2.7.3]{achar_perverse_2021} which says, in particular, that every $\CF\in IrrPerv_G(\g)$ supported on $\CN_G$ is $\Gm$-equivariant with respect to the (weight-$1$) scaling action. We later realised that this is false (consider $G=\mathbf{SL}_2$). Achar pointed out that Lemma 8.2.7.3 becomes correct if $\Gm$ acts via the weight-2 scaling instead ($\lambda(x)=\lambda^2x$), and pointed us to \cite[beginning of §2.2]{achar_calculations_2019}. We thank Achar and Ma very much for discussions around this.\\
    \indent\indent  Our proof of the \underline{Claim} is inspired by \textit{loc. cit.} An alternative approach, which avoids using \cite[\nopp 5.3]{jantzen_nilpotent_2004}, is to assume that the characteristic $p$ is larger than the orders of the groups of components of the stabilisers of points in nilpotent orbits. Then $\CF$ as in the \underline{Claim} will be tame, monodromicity follows.} \underline{Claim}: every $\CF\in IrrPerv_G(\g)$ supported on $\CN_G$ is \textit{monodromic}\footnote{Recall that a sheaf $\CF$ on a finite dimensional vector space $V$ is called monodromic if the restriction of all $\CH^i(\CF)$ to all $\Gm$-orbits (scaling action) are tame local systems. We refer to \cites{verdier_specialisation_1983, zhou_fourier_2024} for more details.}. Accepting this claim, applying the compatibility of characteristic cycles and Fourier transforms for monodromic sheaves (\cite[\nopp 1.2]{zhou_fourier_2024}), we get $SS(\CF)=SS(F\CF)$. The equivalence of 2) and 4) is then evident, using the fact that the base of the singular support equals the support.\\

    We now show the claim. Let $\CF\in IrrPerv_G(\g)$ be supported on $\CN_G$, and $\CO\subseteq \CN_G$ be a nilpotent orbit. By $G$-equivariance, $\CH^i(\CF)|_\CO$ is a local system concentrated in degree 0. We want to show $\CH^i(\CF)|_\CO$ is tame when restricted to the subvariety $k^\X x$, for all closed points $x\in\CO$. By \cite[\nopp 5.3]{jantzen_nilpotent_2004} (and our assumption on the characteristic $p$), there exists a cocharacter $\theta: \Gm\rightarrow G$ \textit{associated} to $x$ (terminology as in \textit{loc. cit.}). In particular, $\forall \lambda\in k^\X$, we have $(\ast): \theta(\lambda)(x)=\lambda^2x$. Now, $\CH^i(\CF)|_\CO$ is $G$-equivariant, hence $\Gm$-equivariant with $\Gm$ acting via $\theta$ and the $G$-action. As $k^\X x$ is an orbit of this $\Gm$-action, $\CH^i(\CF)|_{k^\X x}$ is $\Gm$-equivariant, with respect to the weight-2 $\Gm$-action (by $(\ast)$). In particular, $\CH^i(\CF)|_{k^\X x}$ is trivialised by a degree $2$ étale cover, hence tame.   
\end{proof}

\begin{corollary}
    Assume $G$ is semisimple. Then the Fourier transform of a cuspidal sheaf is cuspidal.
\end{corollary}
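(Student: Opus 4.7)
The plan is to read the corollary directly off the equivalence (1) $\Leftrightarrow$ (2) in Theorem \ref{thm_characterise_cusp}, which is symmetric in $\CF$ and $F\CF$. Explicitly, let $\CF$ be cuspidal. First I would note that the Fourier transform is an auto-equivalence on $Perv_G(\g)$ (it preserves perversity and commutes with the $G$-action since the form is $G$-invariant), so $F\CF \in IrrPerv_G(\g)$ is again a nonzero irreducible $G$-equivariant perverse sheaf. Then by (1) $\Rightarrow$ (2) applied to $\CF$, both $\CF$ and $F\CF$ are supported on $\CN_G$.

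Since Fourier inversion gives $F(F\CF) \cong \CF$ (up to a sign automorphism of $\g$, which preserves $\CN_G$), the pair $(F\CF, F(F\CF))$ consists of two sheaves supported on $\CN_G$. Applying (2) $\Rightarrow$ (1) to $F\CF$ then yields that $F\CF$ is cuspidal. The main (and only) step is therefore just invoking the symmetric characterisation (2); no further analysis of parabolic restriction is needed, because that analysis was already packaged into the equivalence proved in Theorem \ref{thm_characterise_cusp}. There is no serious obstacle: the argument is a one-line consequence of the theorem.
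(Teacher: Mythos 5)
Your argument is correct and is exactly the intended one: the paper gives no explicit proof, but the corollary is meant to be read off the symmetric characterisation 1) $\Leftrightarrow$ 2) of Theorem \ref{thm_characterise_cusp} together with Fourier inversion, just as you do.
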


In the following, we return to the general case ($G$ reductive).

\begin{corollary}\label{cor_cusp_are_char}
     If $\CF\in IrrPerv_G(\g)$ is cuspidal, then it is a character sheaf, and has nilpotent singular support.
\end{corollary}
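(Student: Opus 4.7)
The idea is to reduce immediately to the semisimple case and then apply Theorem \ref{thm_characterise_cusp} to the $[G,G]$-factor. By Definition \ref{def_cusp}, a cuspidal $\CF$ has the form $\CF = AS \BX \CF'$ on the decomposition $\g = Z(\g) \X [\g,\g]$, where $AS = F(\underline{\QQbarl}_x)$ for some closed point $x \in Z(\g)^{\vee}$ and $\CF' \in Perv_G([\g,\g])$; by the remark after that definition, $\CF'$ is cuspidal for the semisimple group $[G,G]$. So Theorem \ref{thm_characterise_cusp} applies to $\CF'$: we get that $\CF'$ and $F\CF'$ are supported on $\CN_{[G,G]}$, that $F\CF'$ is orbital, and that $SS(\CF') \subseteq [\g,\g] \X \CN_{[G,G]}$.

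For the character-sheaf claim, I will compute $F\CF$ via compatibility of Fourier transform with external products: $F\CF \cong F(AS) \BX F(\CF')$. Since $F(AS)$ is (up to a sign) the skyscraper $\underline{\QQbarl}_{-x}$ on $Z(\g)$ and $F\CF'$ is orbital on $[\g,\g]$, the support of $F\CF$ is $\{-x\} \X \overline{[G,G](y)}$ for some $y \in [\g,\g]$. Because $Z(G)$ acts trivially on $\g$ by adjunction, the $G$-orbits on $\g = Z(\g) \X [\g,\g]$ are precisely products of points in $Z(\g)$ with $[G,G]$-orbits in $[\g,\g]$; hence $\{-x\} \X \overline{[G,G](y)}$ is the closure of a single $G$-orbit. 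Thus $F\CF$ is orbital and $\CF$ is a character sheaf.

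For the singular support, I will use the product formula $SS(\CF_1 \BX \CF_2) = SS(\CF_1) \X SS(\CF_2)$ from \cite{saito_characteristic_ext_2017}. Since $AS$ is a rank-one local system on the smooth variety $Z(\g)$, $SS(AS)$ is the zero section $Z(\g) \X \{0\}$. Combined with $SS(\CF') \subseteq [\g,\g] \X \CN_{[G,G]}$, this gives
\[
SS(\CF) \subseteq \bigl(Z(\g) \X \{0\}\bigr) \X \bigl([\g,\g] \X \CN_{[G,G]}\bigr) \subseteq \g \X \bigl(\{0\} \oplus \CN_{[G,G]}\bigr).
\]
Finally, an element $(z, y) \in Z(\g) \oplus [\g,\g]$ is nilpotent in $\g$ iff its semisimple part $z + y_s$ vanishes, which forces $z = 0$ and $y_s = 0$; hence $\CN_G = \{0\} \oplus \CN_{[G,G]}$ and $SS(\CF) \subseteq \g \X \CN_G$, as required. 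The only subtle point is the identification of $\CN_G$ with $\{0\} \oplus \CN_{[G,G]}$ under the decomposition $\g = Z(\g) \oplus [\g,\g]$, but this is immediate from the Jordan decomposition once one notes that $Z(\g) \cap [\g,\g]$ does not cause trouble under our hypotheses on $p$.
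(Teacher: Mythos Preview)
Your proof is correct and follows essentially the same route as the paper: reduce to the semisimple factor via $\CF=AS\BX\CF'$, apply Theorem \ref{thm_characterise_cusp} to $\CF'$, compute $F\CF$ as an external product to get orbitality, and read off the singular support from the product structure. The only cosmetic difference is that the paper phrases the singular-support step as $SS(AS\BX\CF')=p^\circ SS(\CF')$ (smooth pullback along the projection $p\colon Z(\g)\times[\g,\g]\to[\g,\g]$) rather than invoking the external-product formula, but since $AS$ is a rank-one local system these amount to the same containment.
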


\begin{proof}
    By definition, $\CF$ is of the form $AS\BX\CF'$ (notations as in Definition \ref{def_cusp}), with $\CF'$ cuspidal with respect to $[G,G]$. Then $F\CF\cong F(AS)\BX F(\CF')\cong \underline{\QQbarl}_x\BX F(\CF')$, for some closed point $x\in Z(\g)$. Since $F(\CF')$ is orbital (by Theorem \ref{thm_characterise_cusp}.3), so is $\underline{\QQbarl}_x\BX F(\CF')$. So $\CF$ is a character sheaf. Since $SS(AS\BX\CF')=p^\circ SS(\CF')$, where $p: Z(\g)\X [\g,\g]\rightarrow [\g,\g]$ is the projection, and $SS(\CF')$ is nilpotent, $SS(AS\BX\CF')$ is also nilpotent.
\end{proof}

We now prove:

\begin{theorem}\label{thm_adm=char}
    $\CF\in IrrPerv_G(\g)$ is admissible if and only if it is a character sheaf.
\end{theorem}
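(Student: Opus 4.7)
\emph{Proof plan.} The direction ``admissible $\Rightarrow$ character sheaf'' is immediate: $\CF$ is by definition an irreducible constituent of $\IndPG\CG$ for some cuspidal $\CG$ on a Levi $L$; Corollary \ref{cor_cusp_are_char} identifies $\CG$ as a character sheaf, and Lemma \ref{lem_res_ind_char_orb} propagates this property through $\IndPG$ to each of its irreducible constituents, hence to $\CF$. For the converse I would argue by induction on $\dim G$, with induction hypothesis that the equivalence holds for every reductive group of strictly smaller dimension, in particular for every proper Levi subgroup of $G$.

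Let $\CF\in IrrPerv_G(\g)$ be a character sheaf. \textbf{Case 1:} $\ResGP\CF=0$ for every proper parabolic $P\subsetneq G$ with Levi $L\subsetneq G$. Condition 1 of Definition \ref{def_cusp} then holds vacuously, and I would derive condition 2 by Fourier analysis. Since $\CF$ is a character sheaf, $F\CF$ is orbital, so its support is the closure of a single $G$-orbit; because $G$ acts trivially on $Z(\g)$, every $G$-orbit in $\g=Z(\g)\X[\g,\g]$ lies in a slice $\{z\}\X[\g,\g]$. Being irreducible perverse and supported in such a slice (whose first factor is a point), $F\CF$ factors as an external product $\underline{\QQbarl}_z\BX\CG$ with $\CG$ irreducible $G$-equivariant perverse on $[\g,\g]$. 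Applying the inverse Fourier transform, which is compatible with external products, yields $\CF\cong AS\BX\CF'$ with $AS=F^{-1}(\underline{\QQbarl}_z)$ on $Z(\g)$ and $\CF'=F^{-1}\CG$ on $[\g,\g]$. Hence $\CF$ is cuspidal, and it is trivially admissible by taking $L=P=G$ and $\CG=\CF$ in the definition.

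\textbf{Case 2:} $\ResGP\CF\neq 0$ for some proper Levi $L\subsetneq G$ and parabolic $L\subseteq P\subsetneq G$. Being perverse (Lemma \ref{lem_Res_Ind_basics}.1) and of finite length, $\ResGP\CF$ admits an irreducible quotient $\CG'\in Perv_L(\fl)$; by Lemma \ref{lem_res_ind_char_orb}, $\CG'$ is a character sheaf on $\fl$, and by the induction hypothesis $\CG'$ is admissible, that is, an irreducible constituent of $\mathrm{Ind}^L_{P_1}\CG_1$ for some cuspidal $\CG_1$ on a Levi $L_1\subseteq L$. The adjunction $(\ResGP,\IndPG)$ applied to the surjection $\ResGP\CF\twoheadrightarrow\CG'$ produces a nonzero map $\CF\to\IndPG\CG'$, necessarily injective by simplicity of $\CF$ and perverse $t$-exactness of $\IndPG$, so $\CF$ is an irreducible constituent of $\IndPG\CG'$. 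The exactness of $\IndPG$ and the transitivity formula $\IndPG\mathrm{Ind}^L_{P_1}\CG_1=\mathrm{Ind}^G_{P_1U}\CG_1$ (Lemma \ref{lem_Res_Ind_basics}.3) then realise $\CF$ as an irreducible constituent of $\mathrm{Ind}^G_{P_1U}\CG_1$, proving that $\CF$ is admissible.

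The main obstacle is the Fourier step in Case 1: deducing the external-product decomposition $\CF\cong AS\BX\CF'$ along $\g=Z(\g)\X[\g,\g]$ from the mere orbitality of $F\CF$. This rests on the observation that any irreducible perverse sheaf supported on a slice $\{z\}\X Y$ of $X\X Y$ is the external product of $\underline{\QQbarl}_z$ with an irreducible perverse sheaf on $Y$ --- elementary, but the only step where condition 2 of cuspidality (rather than merely condition 1) is actually invoked, and the place where the character-sheaf hypothesis does real work beyond the formal induction.
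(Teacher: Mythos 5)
Your proof is correct and follows the same overall strategy as the paper (forward direction via Corollary \ref{cor_cusp_are_char} and Lemma \ref{lem_res_ind_char_orb}; converse by induction over Levi subgroups, choosing an irreducible quotient of $\ResGP\CF$, invoking the induction hypothesis, and concluding by adjunction and transitivity). Two local differences are worth recording. First, your Case 1 is more careful than the paper's corresponding step: the paper splits into ``$\CF$ cuspidal'' versus ``some $\ResGP\CF\neq 0$'', which implicitly uses that a character sheaf killed by all proper parabolic restrictions satisfies condition 2 of Definition \ref{def_cusp}; you actually prove this, by observing that orbitality of $F\CF$ forces its support into a slice $\{z\}\times[\g,\g]$ (since $G$ acts trivially on $Z(\g)$), so that $F\CF\cong\underline{\QQbarl}_z\BX\CG$ and hence $\CF\cong AS\BX\CF'$ after inverse Fourier transform. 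This is a genuine (if small) improvement in rigour. Second, in Case 2 you pass from ``$\CG'$ is a subquotient of $\mathrm{Ind}^L_{P_1}\CG_1$'' to ``$\IndPG\CG'$ is a subquotient of $\mathrm{Ind}^G_{P_1U}\CG_1$'' using only the exactness of $\IndPG$ (Lemma \ref{lem_Res_Ind_basics}.1), whereas the paper upgrades $\CG'$ to a direct summand via the Decomposition Theorem (Lemma \ref{lem_Res_Ind_basics}.2, with a footnote justifying geometric origin of cuspidal sheaves) and then chains nonzero Homs. Your route is cleaner and drops the geometric-origin input entirely. The only points you gloss over are minor and also glossed in the paper: the conjugation needed to put $(L_1,P_1)$ in the standard position required by the transitivity lemma, and the base case of the induction (tori, which your Case 1 in fact covers).
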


\begin{proof}
    Let $\CF$ be an admissible sheaf, i.e., it is an irreducible constituent of $\IndPG\CG$ for some $L\subseteq P\subseteq G$, $P$ containing $L$ as a Levi factor, and some cuspidal $\CG$ on $\fl$. By Corollary \ref{cor_cusp_are_char} and Lemma \ref{lem_res_ind_char_orb}, $\CF$ is a character sheaf.\\

    Conversely, let $\CF$ be a character sheaf, we want to show it is admissible. If $G$ is a torus, this is clear. Assume the statement is true for all Levi factors of all proper parabolic subgroups of $G$, we show it is true for $G$. If $\CF$ is cuspidal, this is clear. If not, let $L\subseteq P\subsetneqq G$, with Levi decomposition $P=L\ltimes U$, be such that $\ResGP\CF\neq 0$. Let $\CG$ be an irreducible constituent of $\ResGP\CF$ which can be written as a quotient of $\ResGP\CF$. $\CG$ is a character sheaf by Lemma \ref{lem_res_ind_char_orb}. So, by the induction hypothesis, there exist $L_1\subseteq P_1$ of $L$, $P_1$ containing $L_1$ as a Levi factor, and $\CH\in IrrPerv_{L_1}(\fl_1)$ cuspidal, such that $\CG$ is an irreducible constituent of $\mathrm{Ind}^L_{P_1}\CH$. Note that $\CG$ is in fact a direct summand of $\mathrm{Ind}^L_{P_1}\CH$ by Lemma \ref{lem_Res_Ind_basics}.2.\footnote{Note that cuspidal sheaves are of geometric origin, because a cuspidal sheaf may be obtained as the middle extension of some irreducible constituent of the pushforward of the constant sheaf on some Galois cover of some nilpotent orbit (we are using the fact that, for the action of a connected algebraic group, an equivariant local system in degree 0 on an orbit corresponds to a representation of $\pi_0$ of the stabiliser).} To show $\CF$ is admissible, because of its irreducibility, it suffices to show $Hom(\CF, \mathrm{Ind}^G_{P_1}\CH)\neq 0$. By the transitivity of the parabolic induction (Lemma \ref{lem_Res_Ind_basics}.3 \footnote{The assumptions there are satisfied possibly after an $L$-conjugation, which is harmless for our purpose.}) and the adjunction, $Hom(\CF, \mathrm{Ind}^G_{P_1U}\CH)=Hom(\CF, \IndPG\mathrm{Ind}^L_{P_1}\CH)=Hom(\ResGP\CF, \mathrm{Ind}^L_{P_1}\CH)$. Since $Hom(\ResGP\CF, \CG)\neq 0$ (by construction, $\CG$ is a quotient of $\ResGP\CF$) and $Hom(\CG, \mathrm{Ind}^L_{P_1}\CH)\neq 0$ (because $\CG$ is a direct summand of $\mathrm{Ind}^L_{P_1}\CH$), we get $Hom(\CF, \mathrm{Ind}^G_{P_1U}\CH)\neq 0$.
\end{proof}

\begin{remark}\label{rmk_mirk_claim}
     Mirković's original proof of the “if” direction of Theorem \ref{thm_adm=char} depends on a claim about the irreducibility of a certain local system \cite[\nopp 5.8.ii]{mirkovic_character_2004}. Namely: for $\CF\in IrrPerv_G(\g)$ quasi-admissible, by Lemma \ref{lem_qadm_const_lusz_strat}, $\CF\cong j_{!*}\CL$ for some perverse irreducible local system on some $S_{(L,\CO)}$, he claimed $\CL|_{Z_r(\fl)+\CO}$ is irreducible. We have not been able to verify this claim, and use an induction argument instead. Note that, using the notations as in the discussion after Proposition \ref{prop_lusztigstrat}, $\CL|_{Z_r(\fl)+\CO}\cong (a^*\CL)|_{\{1\}\X(Z_r(\fl)+\CO)}$, and $a$ is a Galois cover. We do not know if $a^*\CL$ is irreducible.
\end{remark}

\section{The microlocal characterisation of character sheaves}\label{sec_characterise_char}
The setup is as in the Conventions. We emphasise that $\g$ and its dual are identified throughout, using the $G$-invariant non-degenerate symmetric bilinear form.\\

The goal of this section is to prove the following microlocal characterisation of character sheaves. The proof will be given after some preliminary studies.

\begin{theorem}\label{thm_characterise_char}
    (With assumptions as above) $\CF\in IrrPerv_G(\g)$ is a character sheaf if and only if $\CF$ has nilpotent singular support and is quasi-admissible.
\end{theorem}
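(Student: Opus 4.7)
The plan is to handle the two directions separately, with the ``if'' direction carrying the main content.

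For ``only if'', quasi-admissibility is Lemma \ref{lem_char_are_qadm}. For nilpotence of $SS(\CF)$, Theorem \ref{thm_adm=char} realizes a character sheaf as an irreducible constituent of $\IndPG\CG_0$ for some cuspidal $\CG_0$ on $\fl$, whose singular support is nilpotent by Corollary \ref{cor_cusp_are_char}. I would then verify that $\IndPG$ preserves nilpotence of singular support via a proper/smooth computation along the correspondence $\g\xleftarrow{q}G\times^P\fp\xrightarrow{\pi}\fl$ defining $\IndPG$, using that the image of $\fl\times\CN_L$ under the associated tangent correspondence lies in $\g\times\CN_G$.

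For ``if'', let $\CF$ be quasi-admissible with $SS(\CF)\subseteq \g\times\CN_G$; by Theorem \ref{thm_adm=char} it suffices to show $\CF$ is admissible. By Lemma \ref{lem_qadm_const_lusz_strat}, $\mathrm{supp}(\CF)$ is a union of Lusztig-stratum closures; pick a pair $(L,\CO)$ with $S_{(L,\CO)}$ of maximal dimension contained in $\mathrm{supp}(\CF)$. By quasi-admissibility, $\CF|_{Z_r(\fl)+\CO}$ decomposes into summands of the form $AS|_{Z_r(\fl)}\BX\CL'$; pick one irreducible summand and set $\CG:=AS\BX (j')_{!*}\CL'$ on $\fl=Z(\fl)\oplus[\fl,\fl]$, where $j':\CO\hookrightarrow [\fl,\fl]$ is the inclusion. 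The remainder of the argument proceeds in three steps: (i) show $SS(\CG)$ is nilpotent; (ii) deduce from Theorem \ref{thm_characterise_cusp}.4 applied to the semisimple $[L,L]$ that $(j')_{!*}\CL'$ is cuspidal, hence $\CG$ is cuspidal on $\fl$ by the remark after Definition \ref{def_cusp}; (iii) exhibit a nonzero morphism $\ResGP\CF\to\CG$, yielding by adjunction that $\CF$ is an irreducible constituent of $\IndPG\CG$. For step (iii), Lemma \ref{lem_res=Res} identifies $(\ResGP\CF)|_{Z_r(\fl)+\CO}$ with $\CF|_{Z_r(\fl)+\CO}[2\dim U]$, which shares the summand $AS|_{Z_r(\fl)}\BX\CL'[2\dim U]$ with $\CG|_{Z_r(\fl)+\CO}$; the maximality of $(L,\CO)$ together with Proposition \ref{prop_lusztigstrat}.4 converts this shared summand into the required morphism.

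The crux, and main obstacle, is step (i). My approach uses the smooth correspondence
\[
Z_r(\fl)+\CO \xleftarrow{p} \tilde{S}_{(L,\CO)}=G\times^L(Z_r(\fl)+\CO) \xrightarrow{a} \g,
\]
in which $p$ is a smooth $G/L$-bundle and $a$ is a Galois \etale cover onto $S_{(L,\CO)}$ by Proposition \ref{prop_lusztigstrat}.3. $G$-equivariance of $\CF$ gives $a^*\CF\cong p^*(\CF|_{Z_r(\fl)+\CO})$; the smoothness of $p$ and $a$ then forces the equality $a^\circ SS(\CF)=p^\circ SS(\CF|_{Z_r(\fl)+\CO})$. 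The concrete cotangent computation --- for $y\in Z_r(\fl)+\CO$, a cotangent vector $\eta\in \g$ pulls back from $\fl$ iff $[y,\eta]=0$, which by $Z_G(y_s)^\circ=L$ forces $\eta\in Z_\g(y)=Z_\fl(y)\subseteq \fl$ --- yields $SS(\CF|_{Z_r(\fl)+\CO})\subseteq (Z_r(\fl)+\CO)\times \CN_L$. Middle extension from the smooth locally closed $Z_r(\fl)+\CO$ up to $\fl$ cannot enlarge singular support beyond conormals to nilpotent Lusztig strata of $\fl$, so $SS(\CG)\subseteq \fl\times\CN_L$. This singular-support transfer from $\g$ to $\fl$ replaces the characteristic-zero D-module argument of Mirković by careful use of the $\ell$-adic singular-support calculus of Beilinson and Saito, and is precisely where the positive-characteristic difficulty is concentrated.
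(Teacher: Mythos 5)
Your overall architecture matches the paper's: reduce to admissibility via Theorem \ref{thm_adm=char}, locate the open stratum $S_{(L,\CO)}$ of the support, build a candidate cuspidal sheaf from $\CF|_{Z_r(\fl)+\CO}$, certify cuspidality through Theorem \ref{thm_characterise_cusp}.4, and exhibit $\CF$ inside its parabolic induction; the ``only if'' direction is exactly the paper's. But step (i), which you rightly call the crux, has a genuine gap. Your cotangent computation only controls $SS(\CF|_{Z_r(\fl)+\CO})$, which is the zero section for free (a local system on a smooth variety) and says nothing about the middle extension at the boundary $\overline{\CO}\setminus\CO$. The assertion that ``middle extension cannot enlarge singular support beyond conormals to nilpotent Lusztig strata of $\fl$'' is precisely what must be proved and is not a general principle: in positive characteristic, constructibility with respect to a stratification does \emph{not} bound $SS$ by the union of conormals of the strata (this is the wild-ramification phenomenon), and in any case conormals of nilpotent strata are not contained in $\fl\X\CN_L$ (the conormal of $\{0\}$ is all of $\fl$), so the claimed principle would not even yield the conclusion. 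The paper's argument instead shows, using $SS(\CF)\subseteq\Lambda$ and Lemma \ref{lem_conormal_comp}, that the inclusion of the \emph{whole} regular locus $\fl_r\hookrightarrow\g$ --- not just $Z_r(\fl)+\CO$ --- is \emph{properly} $SS(\CF)$-transversal, so that $SS(\CF|_{\fl_r})=i^\circ SS(\CF)\subseteq\fl_r\X\CN_L$ by Saito's theorem; then, since $Z_r(\fl)+\CO$ is open in $supp(\CF|_{\fl_r})$ (Lemma \ref{lem_intersect_lusz_strat}), Lemma \ref{lem_perv_1} identifies the constituents of $(j'_{!*}\CL_0)|_{\fl_r}$ with constituents of $\CF|_{\fl_r}$, and the external-product form $AS\BX(-)$ propagates the bound from $\fl_r$ to all of $\fl$. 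The open set $\fl_r$ contains $Z_r(\fl)+\overline{\CO}$ and hence sees the closure of the orbit; your correspondence over $Z_r(\fl)+\CO$ alone cannot.

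Step (iii) is also under-justified. The adjunction $Hom(\ResGP\CF,\CG)=Hom(\CF,\IndPG\CG)$ gives something nonzero only if $\CG$ is a \emph{quotient} of $\ResGP\CF$; sharing an irreducible summand with $(\ResGP\CF)|_{Z_r(\fl)+\CO}$ only makes $\CG$ a sub\emph{quotient}, and the maximality of $(L,\CO)$ gives openness of $Z_r(\fl)+\CO$ in the support but does not control the extension structure of $\ResGP\CF$ across the boundary. The paper sidesteps this by computing $(\IndPG(j'_{!*}\CL_0))|_{S_{(L,\CO)}}$ directly --- proper base change, equivariant descent along $G\X^P(Z_r(\fl)+\CO+\fu)$, the projection formula, and the fact that $a:G\X^L(Z_r(\fl)+\CO)\to S_{(L,\CO)}$ is a Galois cover --- obtaining $\CL\OX a_*\underline{\QQbarl}$, which contains $\CL$ as a direct summand; Lemma \ref{lem_perv_1} then shows $\CF= j_{!*}\CL$ is a constituent of $\IndPG(j'_{!*}\CL_0)$, hence of $\IndPG$ applied to one of its (cuspidal) irreducible constituents.
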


\begin{definition}
    Denote by $\Lambda$ the following subset of $T^*\g$: $\{(x,y)\in\g\X\CN_G\,|\, y\in Z_\g(x)\}$.
\end{definition}

\begin{lemma}\label{lem_form_of_SS}
    1) $\Lambda$ is closed, Lagrangian, and is contained in $\cup_{all} (T^*_{S_{(L,\CO)}}\g)$, where the union ranges exactly once through each $G$-conjugacy class of $(L,\CO)$ pairs. It follows, for dimensional reasons, that $\Lambda$ is of the form $\cup_{some} (\overline{T^*_{S_{(L,\CO)}}\g})$, for some classes of $(L,\CO)$.\\
    2) If $\CF\in Perv_G(\g)$ has nilpotent singular support, then $SS(\CF)\subseteq \Lambda$. It follows, for dimensional reasons, that $SS(\CF)$ is of the form $\cup_{some} (\overline{T^*_{S_{(L,\CO)}}\g})$, for some classes of $(L,\CO)$.
\end{lemma}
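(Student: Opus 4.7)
The plan is to prove part 1 first; part 2 will then follow almost immediately.

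For part 1, closedness of $\Lambda$ is clear from its defining closed conditions. The key step is the containment $\Lambda \subseteq \bigcup_{\text{all}} T^*_{S_{(L,\CO)}}\g$. Given $(x,y) \in \Lambda$, I would set $L := Z_G(x_s)^\circ$ (a Levi under our hypothesis on $p$) and let $\CO$ be the $L$-orbit of $x_n$, so that $x \in S_{(L,\CO)}$. Using the smooth cover $G\times^L (Z_r(\fl)+\CO) \twoheadrightarrow S_{(L,\CO)}$ from §\ref{sec_lusztigstrat}, one gets $T_x S_{(L,\CO)} = [\g,x] + Z(\fl) + [\fl,x_n]$. I then intend to compute the conormal under the invariant form, using the orthogonal decomposition $\g = \fl \oplus \fl^\perp$ (the form restricts non-degenerately to $\fl$). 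The crucial observation is that $\operatorname{ad}(x)$ acts invertibly on $\fl^\perp$, since $\operatorname{ad}(x_s)$ has non-zero eigenvalues there ($x_s \in Z_r(\fl)$) and $\operatorname{ad}(x_n)$ is a commuting nilpotent endomorphism; this forces $Z_\g(x) = Z_\fl(x_n) \subseteq \fl$. Combined with $Z(\fl)^\perp \cap \fl = [\fl,\fl]$ (from reductivity of $\fl$ and the form), this yields
$$ T_x S_{(L,\CO)}^\perp = Z_\fl(x_n) \cap [\fl,\fl]. $$
To conclude $y$ lies in this conormal: $y \in Z_\g(x) = Z_\fl(x_n) \subseteq \fl$ because $y$ commutes with $x$; and since $y$ is nilpotent in $\g$ (equivalently in the reductive subalgebra $\fl$), writing $y = y_Z + y'$ in the direct sum $\fl = Z(\fl) \oplus [\fl,\fl]$, the uniqueness of Jordan decomposition together with $Z(\fl)$ being semisimple and $Z(\fl) \cap [\fl,\fl] = 0$ forces $y_Z = 0$, so $y \in [\fl,\fl]$.

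For Lagrangianness and the "union of closures" conclusion: each $T^*_{S_{(L,\CO)}}\g$ is smooth irreducible of dimension $\dim \g$, and there are finitely many, so $\dim \Lambda \leq \dim \g$; for the reverse inequality, I would take $(L,\CO) = (G, \CO_{\mathrm{reg}})$ with $\CO_{\mathrm{reg}}$ a regular nilpotent orbit, whose conormal fiber $Z_{[\g,\g]}(n)$ is purely nilpotent and so $T^*_{S_{(G,\CO_{\mathrm{reg}})}}\g \subseteq \Lambda$ already accounts for a component of dimension $\dim \g$. Thus $\Lambda$ is isotropic (sitting inside a finite union of conormals, each of which is Lagrangian) of middle dimension, hence Lagrangian; and each irreducible component of $\Lambda$, being an irreducible variety of dimension $\dim \g$ contained in the union of the finitely many irreducibles $\overline{T^*_{S_{(L,\CO)}}\g}$ of the same dimension, must equal one of them. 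Part 2 then falls out cleanly: $G$-equivariance of $\CF$ forces $SS(\CF) \subseteq \mu^{-1}(0)$ for the moment map $\mu(x,y) = [x,y]$ of the adjoint action (a standard fact for equivariant sheaves), and combining this with the hypothesis $SS(\CF) \subseteq \g \times \CN_G$ gives $SS(\CF) \subseteq \Lambda$; the same irreducibility-plus-dimension argument identifies $SS(\CF)$ with a union of the $\overline{T^*_{S_{(L,\CO)}}\g}$.

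The main obstacle is the conormal computation for Lusztig strata. It conceals two steps that both rely on our hypothesis on $p$: first, the invertibility of $\operatorname{ad}(x)$ on $\fl^\perp$, which is what collapses any potential $\fl^\perp$-component of $y$ in $Z_\g(x)$ and makes the conormal a genuine subset of $\fl$; and second, the argument that a nilpotent element of $\fl$ has vanishing $Z(\fl)$-component, which relies on semisimplicity of $Z(\fl)$, the direct-sum decomposition $\fl = Z(\fl) \oplus [\fl,\fl]$, and uniqueness of Jordan decomposition. These are routine under the Conventions of the paper but would fail in bad characteristic, which is precisely why the hypothesis on $p$ cannot be dropped here.
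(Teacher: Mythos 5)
Your argument for the containment $\Lambda\subseteq\bigcup_{\mathrm{all}} T^*_{S_{(L,\CO)}}\g$ and for part 2 is essentially the paper's: the conormal computation you carry out inline (arriving at $(T^*_{S_{(L,\CO)}}\g)_x=Z_{[\fl,\fl]}(x_n)$ via $Z_\g(x)=Z_\fl(x_n)$ and $Z(\fl)^{\perp}\cap\fl=[\fl,\fl]$) is exactly the content of Lemma \ref{lem_conormal_comp}, and part 2 is the same two-line combination of $G$-equivariance (i.e. $SS(\CF)\subseteq\mu^{-1}(0)$) with the nilpotency hypothesis. A minor slip: $Z(\fl)$ is not semisimple as a Lie algebra --- it is toral, i.e. consists of semisimple elements --- but the fact you actually need, $\CN_L\subseteq[\fl,\fl]$, is standard and is what your Jordan-decomposition argument proves once rephrased this way.

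The one genuine gap is in the Lagrangian step. From $\Lambda\subseteq\bigcup_{\mathrm{all}} T^*_{S_{(L,\CO)}}\g$ you correctly get isotropy and $\dim\Lambda\le\dim\g$, and the regular nilpotent orbit exhibits one component of dimension $\dim\g$; but you then assert that \emph{every} irreducible component of $\Lambda$ has dimension $\dim\g$, which follows from neither of these. Both the Lagrangian claim and the conclusion $\Lambda=\bigcup_{\mathrm{some}}\overline{T^*_{S_{(L,\CO)}}\g}$ require this pure-dimensionality: a lower-dimensional component would be properly contained in some $\overline{T^*_{S}\g}$ and could not be one of them. (For $SS(\CF)$ in part 2 the analogous point is covered by Beilinson's equidimensionality theorem, so no issue there.) The paper closes the gap for $\Lambda$ by a symmetry trick you may want to adopt: since $y\in Z_\g(x)\iff x\in Z_\g(y)$, one may view $\g\X\g$ as the cotangent bundle of the \emph{second} factor, and then $\Lambda$ is literally $\bigcup_{\CO\subseteq\CN_G}T^*_{\CO}\g$, a finite union over the nilpotent orbits; each piece is irreducible of dimension exactly $\dim\g$ (its projection to $\CO$ has fibres $Z_\g(y)$ of dimension $\dim\g-\dim\CO$), so $\Lambda$ is a finite union of closures of irreducible Lagrangian conormals and in particular pure of dimension $\dim\g$. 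Grafting this one observation onto your write-up makes the rest go through.
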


\begin{proof}
    1) $\Lambda$ is closed because it is defined by a closed condition. To see it is Lagrangian, first note that, in general, for any $x\in\g$, $(T^*_{G(x)}\g)_x=Z_\g(x)$.\footnote{Proof: $(T^*_{G(x)}\g)_x=((TG(x))_x)^\perp=[\g,x]^\perp=Z_\g(x)$, where in the second equality we have used the fact that, under our assumption on $p$, $Lie(Z_G(x))=Z_\g(x)$ (see Conventions).} As $\Lambda$ coincides with $=\{(x,y)\in\g\X\CN_G\,|\, x\in Z_\g(y)\}\subseteq \g\X\g$. View $\g\X\g$ as the cotangent bundle of the second $\g$ factor, $\Lambda$ is then precisely $\cup_{\CO\subseteq\CN_G}\g$. This shows it is Lagrangian.\\
    
    We now show $\Lambda\subseteq\cup_{all} (T^*_{S_{(L,\CO)}}\g)$. For any $x\in\g$, let $L=Z_G(x_s)^\circ$. Then, by Lemma \ref{lem_conormal_comp}, $x\in Z_r(\fl)+\CO$ for some nilpotent orbit $\CO$ of $L$, and $(T^*_{S_{(L,\CO)}}\g)_x=Z_{[\fl,\fl]}(x_n)$. As $Z_\g(x)\cap\CN_G=Z_\fl(x)\cap\CN_G=Z_\fl(x)\cap\CN_L=Z_{[\fl,\fl]}(x_n)\cap\CN_L$ (we used $Z_\g(x)=Z_\fl(x)$, see the proof of Lemma \ref{lem_conormal_comp}), the claim follows.\\

    2) As $\CF$ is $G$-equivariant, $SS(\CF)\subseteq \cup_{x\in \g}((T^*_{G(x)}\g)_x)=\cup_{x\in \g}(Z_\g(x)_x)$. As $SS(\CF)\subseteq \g\X\CN_G$, $SS(\CF)\subseteq\{(x,y)\in\g\X\CN_G\,|\, y\in Z_\g(x)\}=\Lambda$.
\end{proof}

\begin{remark}
    We do not know if Lusztig stratification satisfies the analogue of Whitney condition A, i.e., if $\cup_{all} (T^*_{S_{(L,\CO)}}\g)$ is closed.
\end{remark}

\begin{lemma}\label{lem_conormal_comp}
    Let $x\in\g$, $L=Z_G(x_s)^\circ$. Then, $L$ is a Levi subgroup and $x\in Z_r(\fl)+\CO$ for some nilpotent orbit $\CO$ of $L$. Let $S_{(L,\CO)}$ be the corresponding Lusztig stratum. Then $(T^*_{Z_r(\fl)+\CO}\fl)_x=Z_{[\fl,\fl]}(x_n)$,  $(T^*_{S_{(L,\CO)}}\g)_x=Z_{[\fl,\fl]}(x_n)$.
\end{lemma}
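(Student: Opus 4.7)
The plan is to identify both conormal fibres as orthogonal complements of the corresponding tangent spaces, using the $G$-invariant non-degenerate bilinear form on $\g$ (which restricts to a non-degenerate form on $\fl$ and identifies these spaces with their duals). For the setup, our standing hypotheses on $p$ guarantee that $L=Z_G(x_s)^\circ$ is a Levi subgroup with $\mathrm{Lie}(L)=\fl=Z_\g(x_s)$. Thus $x_s\in Z(\fl)$ with $Z_G(x_s)^\circ=L$, so $x_s\in Z_r(\fl)$; and $x_n\in Z_\g(x_s)=\fl$ is nilpotent, hence lies in $\CN_L$ and in particular in some $L$-orbit $\CO$, giving $x\in Z_r(\fl)+\CO$. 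For the first conormal, the tangent space to $Z_r(\fl)+\CO$ at $x=x_s+x_n$ is visibly $Z(\fl)+[\fl,x_n]$; taking orthogonals inside $\fl$ using the non-degenerate form, we have $Z(\fl)^\perp=[\fl,\fl]$ (from the decomposition $\fl=Z(\fl)\oplus[\fl,\fl]$ into mutually orthogonal summands) and $[\fl,x_n]^\perp=Z_\fl(x_n)$ (by invariance, $\langle[z,x_n],y\rangle=\langle z,[x_n,y]\rangle$), whose intersection is $Z_{[\fl,\fl]}(x_n)$.

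For the second conormal, I would realise $S_{(L,\CO)}$ as the image of the orbit map $\phi:G\times(Z_r(\fl)+\CO)\to\g$, $(g,y)\mapsto\mathrm{Ad}(g)(y)$. Smoothness of $S_{(L,\CO)}$ together with a dimension count using Proposition \ref{prop_lusztigstrat}.3 shows $\phi$ is smooth onto its image, so
\[T_x S_{(L,\CO)}=[\g,x]+Z(\fl)+[\fl,x_n].\]
The key reduction is to decompose $\g=\fl\oplus\mathfrak{m}$ into $\mathrm{ad}(x_s)$-eigenspaces, where $\mathfrak{m}$ collects the non-zero eigenspaces; $\mathrm{ad}(x_s)$ is invertible on $\mathfrak{m}$, so $[\g,x_s]=\mathfrak{m}$. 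Moreover, since $\fl$ centralises $x_s$, $\mathrm{ad}(\fl)$ preserves each eigenspace, in particular $[\mathfrak{m},x_n]\subseteq\mathfrak{m}$ while $[\fl,x_n]\subseteq\fl$. Combining, $[\g,x]=\mathfrak{m}+[\fl,x_n]$, hence $T_x S_{(L,\CO)}=\mathfrak{m}\oplus(Z(\fl)+[\fl,x_n])$ inside $\g=\fl\oplus\mathfrak{m}$.

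Finally, distinct $\mathrm{ad}(x_s)$-eigenspaces are mutually orthogonal under the invariant form (from $\lambda\langle y_\lambda,y_\mu\rangle=-\mu\langle y_\lambda,y_\mu\rangle$), so $\fl\perp\mathfrak{m}$ and the form restricts non-degenerately to each summand. Hence $\mathfrak{m}^\perp=\fl$, and within $\fl$ the orthogonal of $Z(\fl)+[\fl,x_n]$ is $Z_{[\fl,\fl]}(x_n)$ by the same calculation as for the first conormal. Intersecting,
\[(T^*_{S_{(L,\CO)}}\g)_x=\fl\cap\bigl(Z_{[\fl,\fl]}(x_n)+\mathfrak{m}\bigr)=Z_{[\fl,\fl]}(x_n),\]
as required. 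The only non-routine step is confirming that $T_x S_{(L,\CO)}$ equals (not merely contains) $[\g,x]+Z(\fl)+[\fl,x_n]$, which rests on the smoothness assertion in Proposition \ref{prop_lusztigstrat}.3; every other step is a direct computation with the invariant form.
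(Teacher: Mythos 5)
Your proof is correct, and it arrives at the answer by a slightly different route than the paper. The paper's proof is shorter: it invokes the identity $(T^*_{G(x)}\g)_x=[\g,x]^\perp=Z_\g(x)=Z_\fl(x)$ (which rests on the standing hypothesis $\mathrm{Lie}(Z_G(x))=Z_\g(x)$), observes by a dimension count via Proposition \ref{prop_lusztigstrat}.3 that $(TS_{(L,\CO)})_x=(TG(x))_x\oplus(TZ(\fl))_x$, and then intersects $Z_\fl(x)$ with $Z(\fl)^\perp$ to get $Z_{[\fl,\fl]}(x_n)$. You instead make the tangent space fully explicit through the $\mathrm{ad}(x_s)$-eigenspace decomposition $\g=\fl\oplus\mathfrak{m}$ and compute orthogonals directly; this buys a self-contained verification of the mutual orthogonality and of $[\g,x]$ at the cost of a few extra lines, while the paper's version buys brevity by outsourcing the work to the equality $[\g,x]^\perp=Z_\g(x)$. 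One small point in your write-up: to conclude $[\g,x]=\mathfrak{m}+[\fl,x_n]$ you cite $[\g,x_s]=\mathfrak{m}$, but what is needed is $\mathrm{ad}(x)(\mathfrak{m})=\mathfrak{m}$, not $\mathrm{ad}(x_s)(\mathfrak{m})=\mathfrak{m}$; this follows since $\mathrm{ad}(x)|_{\mathfrak{m}}=\mathrm{ad}(x_s)|_{\mathfrak{m}}+\mathrm{ad}(x_n)|_{\mathfrak{m}}$ is the sum of an invertible operator and a commuting nilpotent one (and in any case your dimension count against Proposition \ref{prop_lusztigstrat}.3 forces the equality $T_xS_{(L,\CO)}=\mathfrak{m}\oplus(Z(\fl)+[\fl,x_n])$ even from the inclusion alone), so this is a cosmetic rather than substantive gap.
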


\begin{proof}
    The first claim has already been proved in the proof of Proposition \ref{prop_lusztigstrat}.2. The first part of the second claim is clear: $(T^*_{Z_r(\fl)+\CO}\fl)_x=Z_\fl(x)\cap (Z(\fl)^{\perp \text{in $\fl$}})=Z_{[\fl,\fl]}(x_n)$. Finally, we compute $(T^*_{S_{(L,\CO)}}\g)_x$: first notice $(T^*_{G(x)}\g)_x=Z_\g(x)=Z_\fl(x)$, where the last equality follows from $Z_\g(x)\subseteq Z_\g(x_s)=\fl$. This, plus dimensional reasons (Proposition \ref{prop_lusztigstrat}.3), imply $(TS_{(L,\CO)})_x=(TG(x))_x\oplus(TZ(\fl))_x$. So $(T^*_{S_{(L,\CO)}}\g)_x=(T^*_{G(x)}\g)_x\cap (Z(\fl)^{\perp \text{in $\g$}})=Z_{[\fl,\fl]}(x_n)$.
\end{proof}

We now prove Theorem \ref{thm_characterise_char}.

\begin{proof}[Proof of Theorem \ref{thm_characterise_char}]
Let $\CF$ be a character sheaf. By Lemma \ref{lem_char_are_qadm}, it is quasi-admissible. By Theorem \ref{thm_adm=char}, it is the irreducible constituent of $\IndPG\CG=\Gamma^G_P i_*\pi^!(\CG)$ (notations as in Definition \ref{def_para_ind_res}), for some $L\subseteq P\subseteq G$, $P$ containing $L$ as a Levi factor, and some cuspidal $\CG$ on $\fl$. To show $\CF$ has nilpotent singular support, it suffices to show $\IndPG\CG$ does. $\CG$ has nilpotent singular support by Corollary \ref{cor_cusp_are_char}. Since $\pi$ is smooth and $i$ is a closed immersion, $SS(i_*\pi^!(\CG))=i_\circ\pi^\circ SS(\CG)$. It follows that $i_*\pi^!(\CG)$ also has nilpotent singular support (this is the consequence of two simple facts: $T^*_\fp\g=\fu$ and $\CN_L+\fu\subseteq \CN_G$). By the general formula $SS(\Gamma^G_P\CH)\subseteq \overline{G.SS(\CH)}$ for any $\CH\in D_G(\g)$, and the fact that $\g\X\CN_G$ is stable under the $G$-action, we see that $\IndPG\CG=\Gamma^G_P i_*\pi^!(\CG)$ has nilpotent singular support.\\

We now show the converse: given $\CF\in IrrPerv_G(\g)$ quasi-admissible with nilpotent singular support, we will find a cuspidal sheaf for some Levi $L$ whose parabolic induction (for any $P$ containing $L$ as a Levi factor) contains $\CF$ as an irreducible constituent. Apply Theorem \ref{thm_adm=char}, we get $\CF$ is a character sheaf.\\

By Corollary \ref{cor_char_const_lusz_strat}, there exists a unique stratum $S_{(L,\CO)}$ such that $\CL:=\CF|_{S_{(L,\CO)}}$ is an irreducible perverse $G$-equivariant local system, and $\CF\cong j_{!*}\CL$, where $j$ is the inclusion $S_{(L,\CO)} \hookrightarrow \g$. Let $\CL_0=\CL|_{Z_r(\fl)+\CO}[-2\mathrm{dim}(U)]$ (the shift ensures $\CL_0$ is perverse). Consider $j'_{!*}\CL_0\in Perv_L(\fl)$, where $j'$ is the inclusion $Z_r(\fl)+\CO \hookrightarrow \fl$. Fix any $P$ containing $L$ as a Levi factor.\\

\underline{Claim 1}: $\CF$ is an irreducible constituent of $\IndPG(j'_{!*}\CL_0)$.\\

\underline{Claim 2}: $SS(j'_{!*}\CL_0)\subseteq \fl\X\CN_L$.\\

Accepting these two claims for now, we produce a cuspidal sheaf on $\fl$ whose $\IndPG$ contains $\CF$ as an irreducible constituent. By Claim 1, $\CF$ is an irreducible constituent of the $\IndPG$ of some irreducible constituent of $j'_{!*}\CL_0$, which is necessarily of the form $j'_{!*}\CL_i$ for some irreducible constituent $\CL_i$ of $\CL_0$. By the quasi-admissibility of $\CF$, $\CL_i$ is of the form $AS|_{Z_r(\fl)}\BX\CL_i'$, for some $AS$ as in Definition \ref{def_cusp} (with $G$, $\g$ replaced by $L$, $\fl$), and some local system $\CL'_i$ on $\CO$, which is necessarily perverse, irreducible, and $L$-equivariant. We compute: $j'_{!*}\CL_i=j'_{!*}(AS|_{Z_r(\fl)}\BX\CL_i')\cong (j'_{!*}AS|_{Z_r(\fl)})\BX(j'_{!*}\CL_i')\cong AS\BX(j'_{!*}\CL_i')$ (we abuse notations and denote $Z_r(\fl)\hookrightarrow Z(\fl)$ and $\CO\hookrightarrow [\fl,\fl]$ also by $j'$). Claim 2 implies $SS(j'_{!*}\CL_i)\subseteq \fl\X\CN_L$. Combine this with the previous computation, we get $SS(j'_{!*}\CL'_i)\subseteq [\fl,\fl]\X\CN_L$. By Theorem \ref{thm_characterise_cusp}.4, $j'_{!*}\CL'_i$, hence $j'_{!*}\CL_i$, is cuspidal.\\

It remains to prove the two claims. For Claim 1: as $supp(\IndPG(j'_{!*}\CL_0))=\overline{S_{(L,\CO)}}$, it suffices to show $(\IndPG(j'_{!*}\CL_0))|_{S_{(L,\CO)}}$ is a local system containing $\CL$ as an irreducible constituent (using Lemma \ref{lem_perv_1}). Consider the induction diagram
% https://q.uiver.app/#q=WzAsNCxbMCwwLCJHXFx0aW1lc1xcbWF0aGZyYWt7Z30iXSxbMCwxLCJcXG1hdGhmcmFre2d9Il0sWzEsMCwiR1xcdGltZXNeUFxcbWF0aGZyYWt7Z30iXSxbMSwxLCJcXG1hdGhmcmFre2d9Il0sWzAsMSwicHJfMiIsMl0sWzAsMiwiXFxudSJdLFsyLDMsImEiXV0=
\[\begin{tikzcd}
	{G\times\mathfrak{g}} & {G\times^P\mathfrak{g}} \\
	{\mathfrak{g}} & {\mathfrak{g}}
	\arrow["{pr''_2}"', from=1-1, to=2-1]
	\arrow["\nu''", from=1-1, to=1-2]
	\arrow["a''", from=1-2, to=2-2]
\end{tikzcd}\]

$\IndPG(j'_{!*}\CL_0)$ is the $a_*$ of the descent to $Perv_G(G\X^P\g)$ of $pr_2^*(i_*\pi^!(j'_{!*}\CL_0))$. Restrict to $S_{(L,\CO)}$, using proper base change for $a_*$, we reduce to the following diagram 
% https://q.uiver.app/#q=WzAsNCxbMCwwLCJHXFx0aW1lcyBTX3soTCxcXG1hdGhjYWx7T30pfSJdLFswLDEsIlNfeyhMLFxcbWF0aGNhbHtPfSl9Il0sWzEsMCwiR1xcdGltZXNeUFNfeyhMLFxcbWF0aGNhbHtPfSl9Il0sWzEsMSwiU197KEwsXFxtYXRoY2Fse099KX0iXSxbMCwxLCJwcl8yIiwyXSxbMCwyLCJcXG51Il0sWzIsMywiYSJdXQ==
\[\begin{tikzcd}
	{G\times S_{(L,\mathcal{O})}} & {G\times^PS_{(L,\mathcal{O})}} \\
	{S_{(L,\mathcal{O})}} & {S_{(L,\mathcal{O})}}
	\arrow["{pr'_2}"', from=1-1, to=2-1]
	\arrow["\nu'", from=1-1, to=1-2]
	\arrow["a'", from=1-2, to=2-2]
\end{tikzcd}\]

$(\IndPG(j'_{!*}\CL_0))|_{S_{(L,\CO)}}$ is then the $a'_*$ of the descent to $Perv_G(G\X^P S_{(L,\CO)})$ of $pr_2^{'*}((i_*\pi^!(j'_{!*}\CL_0))|_{S_{(L,\CO)}})=pr_2^{'*}(i_*\pi^!\CL_0)$, where the last equality follows from the fact $S_{(L,\CO)}\cap (Z(\fl)+\overline{\CO}+\fu)=Z_r(\fl)+\CO+\fu$.\footnote{Let $y\in S_{(L,\CO)}\cap (Z(\fl)+\overline{\CO}+\fu)$, let $y=y_z+y_o+y_u$, $y_z\in Z(\fl)$, $y_o\in\overline{\CO}$, $y_u\in \fu$. The semisimple part of $y$ is $U$-conjugate to the semisimple part of $y_z+y_o$ (Lemma \ref{lem_fin_conj_classes}.2), which is just $y_z$. As $y\in S_{(L,\CO)}$, $y_z$ must be in $Z_r(\fl)$, so $y\in S_{(L,\CO)}\cap(Z_r(\fl)+\overline{\CO}+\fu)$. By Lemma \ref{lem_U_action_on_l_r}, $y$ is $U$-conjugate to $y_z+y_o$, and $S_{(L,\CO)}\cap(Z_r(\fl)+\overline{\CO}+\fu)$ is $U$-stable. Consequently $y_z+y_o\in S_{(L,\CO)}\cap (Z_r(\fl)+\overline{\CO})$. As $S_{(L,\CO)}\cap (Z_r(\fl)+\overline{\CO})=Z_r(\fl)+\CO$ by Lemma \ref{lem_intersect_lusz_strat}.2, we are done.} By the $P$-equivariance of $\CL$ and the fact that $^P(Z_r(\fl)+\CO)=Z_r(\fl)+\CO+\fu$ (which follows, again, from Lemma \ref{lem_U_action_on_l_r}), we get $\pi^!\CL_0\cong\CL|_{Z_r(\fl)+\CO+\fu}$. So we may further restrict to the following diagram
% https://q.uiver.app/#q=WzAsNSxbMCwwLCJHXFx0aW1lcyAoWl9yKFxcbWF0aGZyYWt7bH0pK1xcbWF0aGNhbHtPfStcXG1hdGhmcmFre3V9KSJdLFswLDEsIlpfcihcXG1hdGhmcmFre2x9KStcXG1hdGhjYWx7T30rXFxtYXRoZnJha3t1fSJdLFsxLDAsIkdcXHRpbWVzXlAoWl9yKFxcbWF0aGZyYWt7bH0pK1xcbWF0aGNhbHtPfStcXG1hdGhmcmFre3V9KSJdLFsxLDEsIlNfeyhMLFxcbWF0aGNhbHtPfSl9Il0sWzIsMV0sWzAsMSwicHJfMiIsMl0sWzAsMiwiXFxudSJdLFsyLDMsImEiXV0=
\[\begin{tikzcd}
	{G\times (Z_r(\mathfrak{l})+\mathcal{O}+\mathfrak{u})} & {G\times^P(Z_r(\mathfrak{l})+\mathcal{O}+\mathfrak{u})} \\
	{Z_r(\mathfrak{l})+\mathcal{O}+\mathfrak{u}} & {S_{(L,\mathcal{O})}} & {}
	\arrow["{pr_2}"', from=1-1, to=2-1]
	\arrow["\nu", from=1-1, to=1-2]
	\arrow["a", from=1-2, to=2-2]
\end{tikzcd}\]
% https://q.uiver.app/#q=WzAsNixbMCwwLCJHXFx0aW1lcyAoWl9yKFxcbWF0aGZyYWt7bH0pK1xcbWF0aGNhbHtPfStcXG1hdGhmcmFre3V9KSJdLFswLDEsIlpfcihcXG1hdGhmcmFre2x9KStcXG1hdGhjYWx7T30rXFxtYXRoZnJha3t1fSJdLFsxLDAsIkdcXHRpbWVzXlAoWl9yKFxcbWF0aGZyYWt7bH0pK1xcbWF0aGNhbHtPfStcXG1hdGhmcmFre3V9KSJdLFsxLDFdLFsyLDAsIkdcXHRpbWVzXkwoWl9yKFxcbWF0aGZyYWt7bH0pK1xcbWF0aGNhbHtPfSkiXSxbMiwxLCJTX3soTCxcXG1hdGhjYWx7T30pfSJdLFswLDEsInByXzIiLDJdLFswLDIsIlxcbnUiXSxbMiw0LCJcXHNpbWVxIiwxLHsic3R5bGUiOnsiYm9keSI6eyJuYW1lIjoibm9uZSJ9LCJoZWFkIjp7Im5hbWUiOiJub25lIn19fV0sWzQsNSwiYSJdXQ==

Notice that $a''\nu''$ is the action map $G\X\g\rightarrow \g$, using the $G$-equivariance of $\CL$, we get $\nu^*a^*\CL\cong pr_2^*(\CL|_{Z_r(\fl)+\CO+\fu})$, i.e., $a^*\CL$ is the descent to $Perv_G(G\X^P (Z_r(\fl)+\CO+\fu))$ of $pr_2^*(\CL|_{Z_r(\fl)+\CO+\fu})$. So $(\IndPG(j'_{!*}\CL_0))|_{S_{(L,\CO)}}\cong a_*a^*\CL\cong \CL\OX a_*\underline{\QQbarl}$, where in the last step we have used the projection formula. Recall the discussion after Proposition \ref{prop_lusztigstrat}, we know $G\X^P(Z_r(\fl)+\CO+\fu)\cong G\X^L(Z_r(\fl)+\CO)$, and that $a$ is a Galois cover. So $a_*\underline{\QQbarl}$ is a local system, with $\underline{\QQbarl}$ as a direct summand. This proves Claim 1.\\

To see Claim 2, we need two further claims:\\

\underline{Claim 2.1}: $i: \fl_r\hookrightarrow \g$ is properly $SS(\CF)$-transversal, and $i^\circ SS(\CF)\subseteq \fl_r\X\CN_L$. The claim implies $SS(\CF|_{\fl_r})=i^\circ SS(\CF)$ is contained in $\fl_r\X\CN_L$.\footnote{We are using the compatibility of the singular support with properly transverse pullbacks, see \cites[\nopp 8.15]{saito_characteristic_2017}[\nopp 1.6]{barrett_singular_2023}.} Proof of the claim: first recall Lemma \ref{lem_form_of_SS}: $SS(\CF)\subseteq \Lambda \subseteq \cup_{all} (T^*_{S_{(L',\CO')}}\g)$ and $SS(\CF)=\cup_{some} \overline{(T^*_{S_{(L',\CO')}}\g)}$. By Lemma \ref{lem_intersect_lusz_strat}.2, it suffices to consider the pairs $(L',\CO')$ with $L=L'$, $\CO'\subseteq \overline{\CO}$. For such a pair, consider any $x\in S_{(L,\CO')}\cap \fl_r$. By conjugating $(L,\CO')$ by $W_L$, we move $x$ into $Z_r(\fl)+\CO'$ (Lemma \ref{lem_intersect_lusz_strat}). By Lemma \ref{lem_conormal_comp}, $(T^*_{S_{(L,\CO')}}\g)_x=(T^*_{Z_r(\fl)+\CO'}\fl)_x$. Consequently: 1) $S_{(L,\CO')}$ intersects $\fl_r$ transversely; 2) $i^\circ$, when restricted to $(T^*_{S_{(L,\CO')}}\g)_x$, is the “identity” map. In particular, it preserves nilpotent elements. These imply that $i$ is $SS(\CF)$-transversal, and that $\overline{(T^*_{S_{(L,\CO')}}\g)}\X_\g \fl_r$ has the correct dimension, so $i$ is properly $SS(\CF)$-transversal.\\

\underline{Claim 2.2}: $Z_r(\fl)+\CO$ is an open subset of $supp(\CF|_{\fl_r})$. Proof: it is clear that $S_{(L,\CO)}\cap \fl_r$ is open in $\overline{S_{(L,\CO)}}\cap \fl_r$. By Lemma \ref{lem_intersect_lusz_strat}.1, $S_{(L,\CO)}\cap \fl_r=$ $^{W_L}(Z_r(\fl)+\CO)=\bigsqcup_{\sigma\in W_L}$ $^\sigma(Z_r(\fl)+\CO)$.  So $Z_r(\fl)+\CO$ is open in $\overline{S_{(L,\CO)}}\cap \fl_r$. From $supp(\CF)=\overline{S_{(L,\CO)}}$, we know $supp(\CF|_{\fl_r})\subseteq \overline{S_{(L,\CO)}}\cap \fl_r$. But we also know $supp(\CF|_{\fl_r})$ contains $Z_r(\fl)+\CO$, so $Z_r(\fl)+\CO$ is an open subset of $supp(\CF|_{\fl_r})$.\\

We now prove Claim 2. We saw in the paragraph after Claim 2 that quasi-admissibility implies irreducible constituents of $j'_{!*}\CL_0$ are of the form $AS\BX(j'_{!*}\CL_i')$. So $SS(j'_{!*}\CL_0)$ is of the form $p^\circ C$ for some $C\subseteq T^*[\fl,\fl]$, where $p: \fl=Z(\fl)\X[\fl,\fl]\rightarrow [\fl,\fl]$ is the projection. This, plus the fact that $j'_{!*}\CL_0$ is supported on $Z(\fl)+\overline{\CO}$, implies that it suffices to prove Claim 2 after restricting to $\fl_r$. By Claim 2.2, every irreducible constituent of $(j'_{!*}\CL_0)|_{\fl_r}$ is an irreducible constituent of $\CF|_{\fl_r}$ (using Lemma \ref{lem_perv_1}), so $SS((j'_{!*}\CL_0)|_{\fl_r})\subseteq SS(\CF|_{\fl_r})$. By Claim 2.1, $SS(\CF|_{\fl_r})$, hence $SS((j'_{!*}\CL_0)|_{\fl_r})$, is contained in $\fl_r\X\CN_L$.
\end{proof}

%\begin{remark}
%    The proof in the “if” direction in fact shows a slightly stronger statement: $\CF\in IrrPerv_G(\g)$ is a character sheaf if it has nilpotent singular support, is constructible with respect to the Lusztig stratification, and is quasi-admissible on the biggest Lusztig stratum on which it is a non-zero local system.
%\end{remark}

We used the following lemma in the proof above.

\begin{lemma}\label{lem_perv_1}
    Let $X$ be a variety (not necessarily irreducible), $\CF\in D(X)$ with $supp(\CF)=X$. If $\CF$ is a local system when restricted to some open, smooth and irreducible $j: U\hookrightarrow X$, then for each irreducible constituent $\CL$ of $\CF|_U$, $j_{!*}\CL$ is an irreducible constituent of $\CF$.
\end{lemma}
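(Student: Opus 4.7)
The plan is to exploit the t-exactness of $j^*$ for the perverse t-structure (since $j$ is an open immersion), which gives $^p\CH^i(\CF|_U) = ({}^p\CH^i(\CF))|_U$ for every $i$. Combined with the assumption that $\CF|_U$ is a local system on the smooth irreducible variety $U$ of dimension $d$, this says $({}^p\CH^i(\CF))|_U$ is the shifted lisse sheaf $\CH^{i-d}(\CF|_U)[d]$. By definition, $\CL$ is an irreducible perverse subquotient of some $^p\CH^i(\CF|_U)$, so the lemma reduces to the following claim: for any perverse sheaf $\CP$ on $X$ and any irreducible perverse constituent $\CL$ of $\CP|_U$, the intermediate extension $j_{!*}\CL$ is an irreducible constituent of $\CP$.

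To establish this reduced claim, I will take a Jordan--Hölder filtration of $\CP$ on $X$ with irreducible subquotients $\CQ_1, \ldots, \CQ_n$, and use t-exactness of $j^*$ to restrict it to $U$. Each $\CQ_k$ has the form $(j_{V_k}^X)_{!*}(\CM_k[\dim V_k])$ for some locally closed smooth irreducible $V_k \subseteq X$ and some irreducible lisse $\CM_k$ on $V_k$. The standard behaviour of intermediate extensions under open restrictions gives: if $V_k \cap U = \emptyset$ then $\CQ_k|_U = 0$; otherwise $V_k \cap U$ is non-empty open dense in $V_k$, and $\CQ_k|_U = (j_{V_k \cap U}^U)_{!*}(\CM_k|_{V_k \cap U}[\dim V_k])$ is irreducible perverse on $U$, with $j_{!*}(\CQ_k|_U) \cong \CQ_k$ by transitivity of intermediate extensions along $V_k \cap U \hookrightarrow U \hookrightarrow X$. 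Hence the non-zero restrictions $\CQ_k|_U$ give a Jordan--Hölder series of $\CP|_U$, each of whose factors determines its $X$-ancestor uniquely as its $j_{!*}$.

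Applying this to $\CP = {}^p\CH^i(\CF)$: since $\CL$ is an irreducible constituent of $\CP|_U$, it must coincide (up to isomorphism) with some non-zero $\CQ_k|_U$, whence $j_{!*}\CL \cong \CQ_k$ is an irreducible constituent of $^p\CH^i(\CF)$, and so of $\CF$ by definition. I do not expect a genuine obstacle; the argument is essentially a direct invocation of the Serre-quotient relationship between $\mathrm{Perv}(X)$ and $\mathrm{Perv}(U)$ for an open immersion, together with perverse t-exactness of open restriction. The hypothesis $\mathrm{supp}(\CF) = X$ does not play a logical role in the above --- it only ensures that the generic setting is non-degenerate and that $\CF|_U$ has any irreducible constituents at all.
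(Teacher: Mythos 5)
Your proof is correct and follows essentially the same route as the paper's: reduce to the perverse case by t-exactness of $j^*$, decompose into Jordan--H\"older factors $j'_{!*}\CL'$ supported on locally closed smooth irreducible pieces, and observe that each factor whose stratum meets $U$ restricts to a simple perverse sheaf on $U$ whose intermediate extension along $j$ recovers the original factor. The only notable difference is that the paper additionally invokes the hypothesis that $\CF|_U$ is a local system (via a singular-support argument) to force each surviving stratum to be dense in $U$, a step your version shows is not needed for the stated conclusion.
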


\begin{proof}
    Using the perverse t-exactness of $j^*$, we easily reduce to the case where $\CF$ is perverse. Assume $\CF$ is perverse in the following. Let $X_1=\overline{U}$, it is the irreducible component that $U$ lies in. Shrinking $U$, we may assume $U$ does not intersect other irreducible components. $\CF$ is a successive extension of its irreducible constituents, each of which is of the form $j'_{!*}\CL'$, for some perverse irreducible local system $\CL'$ on some locally closed, smooth and irreducible $j': U'\hookrightarrow X$. Accordingly, $\CF|_U$ is a successive extension of $(j'_{!*}\CL')|_U$. We claim that $U'\cap U$, if non-empty, is open dense in $U$, in which case $(j'_{!*}\CL')|_U$ is an irreducible constituent $\CL$ of $\CF|_U$, and $j'_{!*}\CL'\cong j_{!*}\CL$. The lemma then follows easily.\\

    To see the claim, assume $U'\cap U$ is non-empty, then $U'$ must lie in $X_1$, then $U'\cap U$ is open dense in $U'$, so $(j'_{!*}\CL')|_U$ is still irreducible, and thus an irreducible constituent of $\CF|_U$. Since $\CF$ is a local system on $U$, $SS(\CF|_{U})$, hence $SS((j'_{!*}\CL')|_U)$, is contained in the zero section. This forces $U'\cap U$ to be open dense in $U$ and $(j'_{!*}\CL')|_U$ to be a local system. It is necessarily isomorphic to an irreducible constituent $\CL$ of $\CF|_U$, and we have $j'_{!*}\CL'\cong j_{!*}\CL$.
\end{proof}

%\newpage
\printbibliography

\Addresses

\end{document}